\newtheorem{definition}{Definition}
\newtheorem{theorem}{Theorem}
\newtheorem{remark}{Remark}
\newtheorem*{theorem*}{Theorem}
\newtheorem{lemma}{Lemma}
\newtheorem{fact}{Fact}
\newtheorem{corollary}{Corollary}
\newtheorem{proposition}{Proposition}
\newtheorem{claim}{Claim}
\DeclareMathOperator*{\argmin}{argmin}
\DeclareMathOperator*{\argmax}{argmax}
\DeclareMathOperator{\Span}{Span}
\newcommand{\AutoAdjust}[3]{{ \mathchoice{ \left #1 #2  \right #3}{#1 #2 #3}{#1 #2 #3}{#1 #2 #3} }}
\newcommand{\Xcomment}[1]{{}}
\newcommand{\inteval}[1]{\Big[#1\Big]}
\newcommand{\InParentheses}[1]{\AutoAdjust{(}{#1}{)}}
\newcommand{\InAngles}[1]{\AutoAdjust{\langle}{#1}{\rangle}}
\newcommand{\InNorms}[1]{\AutoAdjust{\|}{#1}{\|}}
\newcommand{\R}{\mathbb{R}} 
\newcommand{\N}{\mathcal{N}}
\renewcommand{\part}[2]{\frac{\partial #1}{\partial #2}}
\newcommand{\X}{\mathcal{X}}
\newcommand{\Y}{\mathcal{Y}}
\newcommand{\Z}{\mathcal{Z}}
\newcommand{\C}{\mathcal{C}}
\newcommand{\I}{\mathcal{I}}
\newcommand{\half}{\frac{1}{2}}
\newcommand{\dg}{\mathsf{dg}}
\newcommand{\hz}{\hat{z}}
\newcommand{\hZ}{\widehat{\Z}}
\newcommand{\hF}{\widehat{F}}
\newcommand{\ba}{\overline{a}}
\newcommand{\bz}{\overline{z}}
\newcommand{\bZ}{\overline{Z}}
\newcommand{\bF}{\overline{F}}
\newcommand{\boldx}{\boldsymbol{x}}
\newcommand{\ind}{\mathbbm{1}}
\newcommand{\ha}{\hat{a}}
\newcommand{\tar}{\textsc{Target}}
\newcommand{\gap}{\textsc{Gap}}
\newcommand{\sos}{\textsc{SOS}}
\newcommand{\LHSI}{\text{LHS of Inequality}}
\newcommand{\by}{\boldsymbol{y}}
\newcommand{\bw}{\boldsymbol{w}}
\newcommand{\LHSE}{\text{LHS of Equation}}
\newcommand{\setword}[2]{%
  \phantomsection
  #1\def\@currentlabel{\unexpanded{#1}}\label{#2}%
}
\definecolor{MyGray}{rgb}{0.8,0.8,0.8}
\newenvironment{prevproof}[2]{\noindent {\bf {Proof of {#1}~\ref{#2}:}}}{$\hfill \blacksquare$}
\newcommand{\notshow}[1]{{}}
\renewcommand{\vspace}{\notshow}
\newenvironment{nalign}{
    \begin{equation}
    \begin{aligned}
}{
    \end{aligned}
    \end{equation}
    \ignorespacesafterend
}
\def \projhamnospace {tangent residual}
\def \projham {tangent residual }
\def \capprojham {Tangent Residual}
\def \Ham {r^{tan}}
\def \normal {N}
\def \unitnormal {\widehat{N}}
\title{Tight Last-Iterate Convergence of the Extragradient and the Optimistic Gradient Descent-Ascent Algorithm for Constrained Monotone Variational Inequalities}
\author{Yang Cai\thanks{Supported by a Sloan Foundation Research Fellowship and the NSF Award CCF-1942583 (CAREER).} \thanks{Part of this work was done while the author was visiting the Simons Institute for the Theory of Computing.} 
\\Yale University\\yang.cai@yale.edu
\and Argyris Oikonomou\footnotemark[1] \footnotemark[2]
\\Yale University\\argyris.oikonomou@yale.edu
\and Weiqiang Zheng\footnotemark[2] \\Yale University\\weiqiang.zheng@yale.edu}
\begin{document}

\maketitle

\begin{abstract}%
The \emph{monotone variational inequality} is a central problem in mathematical programming that unifies and generalizes many important settings such as smooth convex optimization, two-player zero-sum games, convex-concave saddle point problems, etc.
The \emph{extragradient algorithm} by \citet{korpelevich_extragradient_1976} and the \emph{optimistic gradient descent-ascent algorithm} by \citet{popov_modification_1980} are arguably the two most classical and popular methods for solving monotone variational inequalities. Despite their long histories, the following major problem remains open. \emph{What is the last-iterate convergence rate of the extragradient algorithm or the optimistic gradient descent-ascent} algorithm for monotone and Lipschitz variational inequalities with constraints? We resolve this open problem by showing that both the extragradient algorithm and the optimistic gradient descent-ascent algorithm have a tight $O\left(\frac{1}{\sqrt{T}}\right)$ last-iterate convergence rate for \emph{arbitrary convex feasible sets}, which matches the lower bound by~\cite{golowich_tight_2020,golowich_last_2020}. Our rate is measured in terms of the standard \emph{gap function}. At the core of our results lies a \emph{non-standard performance measure -- the tangent residual}, which can be viewed as an adaptation of the norm of the operator that takes the \emph{local} constraints into account. We use the tangent residual (or a slight variation of the tangent residual) as the the potential function in our analysis of the extragradient algorithm (or the optimistic gradient descent-ascent algorithm) and prove that it is non-increasing between two consecutive iterates.
\end{abstract}

\thispagestyle{empty}
\addtocounter{page}{-1}
\newpage
\tableofcontents 
\thispagestyle{empty}
\addtocounter{page}{-1}
\newpage
\section{Introduction}\label{sec:intro}
The \emph{monotone variational inequality (VI)} problem plays a crucial role in mathematical programming,  providing a unifying setting for the study of optimization and equilibrium problems. It also serves as a computational framework for numerous important applications in fields such as Economics, Engineering, and Finance~\citep{facchinei_finite-dimensional_2007}.  
Monotone VIs have been studied since the 1960s~\citep{hartman_non-linear_1966, browder_nonlinear_1965,lions_variational_1967,brezis_methodes_1968,sibony_methodes_1970}. Formally, a monotone VI is specified by a closed convex set $\Z \subseteq \R^n$ and a \textbf{monotone} operator $F: \Z \rightarrow \R^n$,\footnote{$F$ is monotone if $\InAngles{F(z)-F(z'),z-z'} \ge 0$ for all  $z, z' \in \Z$.} with the goal of finding a $z^* \in \Z$ such that 
\begin{align}
    \InAngles{F(z^*), z^*-z} \le 0 \quad \forall z \in \Z.
\end{align}
We further assume the operator $F$ to be \textbf{Lipschitz}, which is a natural assumption that is satisfied in most applications and is also made in the majority of algorithmic works concerning monotone VIs.
An important special case of the monotone and Lipschitz VI is the convex-concave saddle point problem:
\begin{equation}\label{eq:minmax}
    \min_{x\in \X} \max_{y\in \Y} f(x,y),
\end{equation}
where $\X$ and $\Y$ are closed convex sets in $\R^n$, and $f(\cdot,\cdot)$ is smooth, convex in $x$, and concave in $y$.\footnote{If we set $F(x,y) = \begin{pmatrix}
  \nabla_x f(x,y)\\
  -\nabla_y f(x,y)
\end{pmatrix}$ and $\mathcal{Z}=\mathcal{X}\times \Y$,
then (i) $F(x,y)$ is a monotone and Lipschitz operator, and (ii) the set of saddle points coincide with the solutions of the monotone VI for operator $F$ and domain $\Z$. } Besides its central importance in Game Theory, Convex Optimization, and Online Learning, the convex-concave saddle point problem has recently received a lot of attention from the machine learning community due to several novel applications such as the generative adversarial networks (GANS) (e.g.,~\citep{goodfellow_generative_2014,arjovsky_wasserstein_2017}), adversarial examples (e.g.,~\citep{madry_towards_2017}), robust optimization (e.g.,~\citep{ben-tal_robust_2009}), and reinforcement learning (e.g.,~\citep{du_stochastic_2017,dai_sbeed_2018}).

The extragradient (EG) algorithm by~\citet{korpelevich_extragradient_1976} and the optimistic gradient descent-ascent (OGDA) algorithm by~\citet{popov_modification_1980} are arguably the two most classical and popular methods for solving  Lipschitz and monotone VIs. Interestingly, a fundamental property of these two simple and natural algorithms remained elusive despite their long histories. Namely, the last-iterates of both algorithms are only known to asymptotically converge to a solution of the monotone and Lipschitz VI,\footnote{The last-iterate asymptotic convergence of EG can be found in \cite{korpelevich_extragradient_1976} and~\cite{facchinei_finite-dimensional_2007}, and the last-iterate asymptotic convergence of OGDA can be found in \cite{popov_modification_1980} and \cite{hsieh_convergence_2019}.} but no upper bounds on the rate of convergence had been provided for general settings. 
Motivated by this gap of our understanding, the following question has been posed as an open question in several recent works~\citep{wei_linear_2021,golowich_last_2020,golowich_tight_2020, hsieh_convergence_2019}. \\ 

\medskip\noindent
\hspace{.7cm}\begin{minipage}{0.95\textwidth}
\emph{\centering What is the last-iterate convergence rate of the extragradient algorithm
and the optimistic  gradient  descent-ascent algorithm for monotone and Lipschitz variational inequalities with constraints? }
\end{minipage}\\
\medskip\noindent

We resolve this open problem by providing the tight last-iterate convergence rate of EG and OGDA under arbitrary convex constraints.
Indeed, the same problem has not been answered even for two-player zero-sum games, arguably one of the most basic monotone and Lipschitz VIs.\footnote{A two-player zero-sum game can be specified by its payoff matrix $A\in \mathbb{R}^{\ell\times m}$. It is a special case of the convex-concave saddle point problem, where $\X=\Delta^\ell$ , $\Y=\Delta^m$ ($\Delta^k$ denotes the $k$-dimensional simplex), and the function $f(x,y)=x^\top A y$.} Prior to our work, the only setting where an upper bound on the rate of convergence exists for either EG or OGDA for solving general Lipschitz and monotone VIs is when the problem is unconstrained, i.e., $\Z=\mathbb{R}^n$,~\citep{golowich_last_2020,gorbunov_extragradient_2021, golowich_tight_2020}.\\


\medskip\noindent
\hspace{1.5cm}\begin{minipage}{0.9\textwidth}
\begin{enumerate}
\item[{\bf Main Result:}] For any monotone and Lipschitz variational inequality problem with an \textbf{arbitrary convex constraint set $\Z$}, both EG and OGDA with \textbf{constant step size} achieve a \textbf{tight} last-iterate convergence of $O\left(\frac{1}{\sqrt{T}}\right)$ in terms of the standard convergence measures -- the gap function (Definition~\ref{def:gap function}) and the \projham (Definition~\ref{def:projected hamiltonian}).\footnote{In the unconstrained setting, the \projham is simply the $\ell_2$-norm of $F$. In the constrained setting, the \projham is the $\ell_2$-norm of $F$'s projection to the tangent cone.} See Theorem~\ref{thm:EG last-iterate constrained} for the formal statement. We further show that the tangent residual is an upper bound of the natural residual (Lemma~\ref{lem:projected hamiltonian dominates natural residue}), so our result also implies a tight last-iterate convergence rate of $O(\frac{1}{\sqrt{T}})$ for the natural residual.
\end{enumerate}
\end{minipage}\\

\medskip
\noindent Our upper bounds in terms of the gap function and the natural residual match the lower bounds of~\cite{golowich_last_2020,golowich_tight_2020}, that is, they match in all of the following terms: $T$, the Lipschitz constant of $F$, and the distance between the starting point $z_0$ and the solution $z^*$.


To the best of our knowledge, our result is the first to provide a last-iterate convergence rate for solving monotone and Lipschitz VIs using any algorithm that belongs to the general class known as \emph{p-stationary canonical linear iterative algorithms (p-SCLI)}~\citep{arjevani_iteration_2016}, which contains the EG, OGDA, and  other well-known algorithms. Although often viewed as an approximation to EG, OGDA has an additional feature compared to EG, i.e., it is a \emph{no-regret} learning algorithm (see e.g., ~\citep{rakhlin_online_2013}). A nice implication of our result for OGDA is that for smooth and monotone games (see Appendix~\ref{sec:additional prelim} for the definition), players can each play a no-regret learning algorithm, i.e., OGDA, with constant learning rate, and the overall player behavior exhibits $O(\frac{1}{\sqrt{T}})$ last-iterate convergence rate to a Nash equilibrium in terms of the gap function. 

\paragraph{Why Last-Iterate Convergence?} Both the EG and OGDA algorithms are known to have \emph{average-iterate} convergence. In particular, the average of the iterates of the algorithm converges at a rate of $O(1/T)$~\citep{nemirovski_prox-method_2004, auslender_interior_2005, tseng_accelerated_2008,monteiro_complexity_2010, mokhtari_unified_2019,hsieh_convergence_2019}. Nonetheless, there are  several important reasons to study last-iterate convergence. First, not only is last-iterate convergence theoretically stronger and more appealing, it is also the only type of convergence that describes the trajectory of an algorithm. As demonstrated by~\citet{mertikopoulos_cycles_2018}, the trajectory of an algorithm may be cycling around in the space perpetually while still converges in the average-iterate sense. In game theory, we often view these algorithms as models of agents' behavior in a system/game. Thus, only last-iterate convergence provides a description of the evolution of the system.  
Additionally, EG and OGDA have been successfully applied to improve the training dynamics in GANs, as the training of GANs can be formulated as a saddle point problem~\citep{daskalakis_training_2017,yadav_stabilizing_2017,liang_interaction_2019,gidel_variational_2018,gidel_negative_2019,chavdarova_reducing_2019}. On the one hand, in this formulation of GANs, the objective function $f$ is usually non-convex and non-concave, making existing theoretical guarantees for the average iterate inapplicable. On the other hand, the last iterate typically has good performance in practice. Thus, it is crucial to develop machinery that allows us to analyze the behavior of the last iterate of these algorithms.

\subsection{Our Performance Measure: the \capprojham}\label{sec:intro performance measure}
A major challenge we face for establishing the last-iterate convergence for EG or OGDA in the constrained setting is the choice of the convergence measure. For simplicity, we focus on our choice of the performance measure for EG, as our performance measure for OGDA is  similar and inspired by our performance measure for EG. In the unconstrained case, the central performance measure for EG is the norm of the operator. The key component in both~\citep{golowich_last_2020} and~\citep{gorbunov_extragradient_2021} is to establish that the norm of the operator at the last iterate (also the $T$-th iterate) is upper bounded by $O(\frac{1}{\sqrt{T}})$, which implies a $O(\frac{1}{\sqrt{T}})$ last-iterate convergence  rate for the gap function.

In the constrained setting, the norm of the operator is a poor choice to measure convergence, as it can be far away from $0$ even in the limit, and is hence insufficient to guarantee convergence in terms of the gap function. A standard generalization of the norm of the operator in the constrained setting is the natural residual (Definition~\ref{def:natural residual}), which takes the constraints into account and is guaranteed to converge to $0$ in the limit.  Unfortunately, we observe that the natural residual is \emph{not monotonically decreasing} even in basic bilinear games (see Appendix~\ref{sec:counterexamples}), making it difficult to directly analyze. Similar non-monotonicity has been observed for several other natural performance measures such as the norm of the operator mapping introduced in \citep{diakonikolas_halpern_2020} and the gap function, leaving all these performance measures unsuitable. See more discussion about these performance measures in Section~\ref{sec:performance measure} and Appendix~\ref{sec:counterexamples}.

We choose a non-standard performance measure: the \textbf{\projhamnospace}, which can be viewed as the norm of the operator projected to the tangent cone of the current iterate (Definition~\ref{def:projected hamiltonian}). To the best of our knowledge, this performance measure has not been used in the study of EG or OGDA. The \projham plays a crucial role in our analyses for both algorithms. Unlike the aforementioned performance measures, we show that the \emph{\projham is monotonically decreasing}  and has \emph{a last-iterate convergence rate of ${O(\frac{1}{\sqrt{T}})}$} for EG. For OGDA, we prove that a small modification of the tangent residual is monotonically decreasing, which implies that the tangent residual has a last-iterate convergence rate of ${O(\frac{1}{\sqrt{T}})}$. Using the convergence rate of the \projhamnospace, we can easily derive the last-iterate convergence rate of other classical performance measures such as the natural residual or the gap function. However, we suspect these rates can be challenging to obtain directly. 

\subsection{Sum-of-Squares based Analysis}\label{sec:contribution}


We provide a quick overview on how we establish the monotonicity of the \projham~of the EG algorithm. We first introduce the concept of sum-of-squares programming.
\paragraph{Sum-of-Squares (SOS) Programming.} Suppose we want to prove that a polynomial $p(x)\in \mathbb{R}[x_1,\ldots, x_n]$ is non-negative over a semialgebraic set $\mathcal{S}=\{x\in \mathbb{R}^n: g_i(x)\leq 0, \forall  i\in[m] \}$, where each $g_i(x)$ is also a polynomial. One way is to construct a \emph{certificate of non-negativity}, for example, by providing a set of nonnegative coefficients $\{a_i\}_{i\in[m]}\in \mathbb{R}^m_{\geq 0}$ such that $p(x)+\sum_{i\in[m]} a_i\cdot g_i(x)$ is a sum-of-squares polynomial, that is, a polynomial that can be expressed as the sum of squares of further polynomials. Surprisingly, if $p(x)$ is indeed non-negative over $\mathcal{S}$, a certificate of non-negativity always exists  as guaranteed by a foundational result in real algebraic geometry -- the Krivine-Stengle Positivestellensatz~\citep{krivine_anneaux_1964,stengle_nullstellensatz_1974}, a generalization of Artin's resolution of Hilbert's 17th problem~\citep{artin_uber_1927}. 
Note that, it is sometimes necessary to allow more sophisticated forms of certificates than in the example above, e.g., replacing each coefficient $a_i$ with a SOS polynomial $s_i(x)$, etc. The complexity of a certificate is parametrized by the highest degree of the polynomial involved. 
The SOS programming consists of a hierarchy of algorithms, where the $d$-th hierarchy is an algorithm that searches for a \emph{certificate of non-negativity} up to degree $2d$ based on semidefinite programming. 

We mainly discuss the analysis of EG here, as the analysis of OGDA is similar and also based on SOS programming. At the core of our analysis of the EG algorithm lies the monotonicity of the squared \projhamnospace, which can be formulated as the non-negativity of a \textbf{degree-4 polynomial} in the iterates.\footnote{The \projham is not a polynomial, but the squared \projham is a degree-4 polynomial} 
Our original proof directly applies SOS programming to certify the non-negativity of this degree-4 polynomial. The certificate is rather complex and involves a polynomial identity of a degree-8 polynomial in $27$ variables, which we discover by solving a degree-8 SOS program. Interested readers can find the proof in Appendix~\ref{sec:high degree last-iterate EG}. In this version, we include a simplified proof. By introducing \emph{auxiliary vectors} that are not part of the update rule of EG, we provide an equivalent formulation of the squared tangent residual (Lemma~\ref{lemma:property of tangent residual}) that is a degree-2 polynomial, which allows us to prove the monotonicity of the squared tangent residual using a degree-2 SOS program. The proof can be found in Section~\ref{sec:last-iterate EG}.

For OGDA, we are not able to show that the squared tangent residual is monotone. Inspired by the adaptive potential proof in~\citep{golowich_tight_2020}, we suspect that some extra correction term is needed to construct the potential function. Instead of trying to devise such a correction term manually, we manage to directly find one by searching over a family of performance measures using SOS programming. The search we perform is heuristic but might be helpful to discover potential functions in other problems. See Section~\ref{sec:OGDA} for a more detailed discussion.



\vspace{-.1in}
\subsection{Related Work}
\paragraph{Last-Iterate Convergence Rate for EG-like Algorithms in the Unconstrained Setting.} \cite{golowich_last_2020,golowich_tight_2020} show a lower bound of $\Omega(\frac{1}{\sqrt{T}})$ for solving bilinear games using any p-SCLI algorithms, which include EG and OGDA. For EG, \citet{golowich_last_2020} show an matching upper bound under an additional second-order smoothness condition. \citet{gorbunov_extragradient_2021} improve the result and show that the same upper bound holds without the second-order smoothness condition. For OGDA,  \cite{golowich_tight_2020} provides a matching upper bound under the same second-order smoothness condition. 
These upper bounds hold for all smooth and Lipschitz VIs. With the additional assumption that the operator $F$ is cocoercive,~\cite{lin_finite-time_2021} show a $O(\frac{1}{\sqrt{T}})$ convergence rate for online gradient descent. If we further assume that either $F$ is strongly monotone in VI or the payoff matrix $A$ in a bilinear game has all singular values bounded away from $0$, linear convergence rate is known for EG, OGDA, and several of their variants~\citep{daskalakis_training_2017,gidel_variational_2018,liang_interaction_2019,mokhtari_unified_2019,peng_training_2020,zhang_convergence_2019}.

\paragraph{Last-Iterate Convergence Rate for EG-like Algorithms in the Constrained Setting.} The results for the constrained setting are sparser. If the operator $F$ is strongly monotone, we know that EG and some of its variants have linear convergence rate~\citep{tseng_linear_1995,malitsky_projected_2015}. Several papers establish the asymptotic convergence, i.e., converge in the limit, of the optimistic multiplicative weight updates in constrained convex-concave saddle point problems~\citep{daskalakis_last-iterate_2018,lei_last_2020}. Finally, a recent paper by~\cite{wei_linear_2021} provides a linear rate convergence of OGDA for bilinear games when the domain is a polytope. They show that there is a \emph{problem dependent} constant $0<c<1$ that depends on the payoff matrix of the game as well as the constraint set, so that the error shrinks by a $1-c$ factor. However, $c$ may be arbitrarily close to $0$,  even if we assume the corresponding operator to be $L$-Lipschitz. As a result, their convergence rate is slower than ours when $T$ is not comparable to $\frac{1}{c}$, which may be exponentially large in the dimension $n$, though their rate will eventually catch up. Overall, their ``instance-specific'' bound is incomparable and complements the worst-case view taken in this paper, where we want to derive the worst-case convergence rate for all VIs with monotone and $L$-Lipschitz operator $F$. Our result is the first last-iterate convergence rate in this worst-case view and matches the~lower~bound~by~\cite{golowich_last_2020,golowich_tight_2020}.

\paragraph{Other Algorithms and Performance Measures.}
Other than the gap function, one can also measure the convergence using the norm of the operator if the setting is unconstrained, or the natural residual (Definition~\ref{def:natural residual}) or similar notions if the setting is constrained. 
In the unconstrained setting,~\cite{kim_accelerated_2021},~\cite{yoon_accelerated_2021}, and~\cite{Lee_fast_21} provide algorithms that obtain $O(\frac{1}{T})$ convergence rate in terms of the norm of the operator, which is shown to be optimal by~\citet{yoon_accelerated_2021} for Lipschitz and monotone VIs. In the constrained setting,~\cite{diakonikolas_halpern_2020} shows the same $O(\frac{1}{T})$ convergence rate under the extra assumption that the operator is cocoercive and loses an additional logarithmic factor when the operator is only monotone. Our result implies a $O(\frac{1}{\sqrt{T}})$ last-iterate convergence rate in terms of the natural residual for both EG and OGDA. From an optimization point of view, i.e., the goal is to solve a Lipschitz and monotone VI, we should choose one of the above faster algorithms over EG or OGDA. However, one of our main motivation is game theoretic, that is, we would like to view simple algorithms such as OGDA and EG as models of agents' behavior and understand the speed for the overall behavior to converge to a Nash equilibrium. From this game-theoretic view point, we believe understanding the last-iterate convergence rate of simple algorithms such as EG and OGDA is an important question.

\paragraph{Computer-Aided Proofs.} A powerful computer-aided proof framework -- the \emph{performance estimation problem (PEP)} technique~(e.g., \citep{drori_performance_2014,taylor_performance_2017}) is widely applied to analyze first-order iterative methods. Indeed, the last-iterate convergence rate of EG in the unconstrained setting by~\cite{gorbunov_extragradient_2021} is obtained via the PEP technique. Although the PEP framework can handle projections \citep{taylor_exact_2017, pep-proj-1,pep-proj-2,pep-proj-3}, the main challenge for applying it to the constrained setting is that, 
the PEP framework requires the performance measures to be polynomials of degree $2$ or less~(see e.g., \citep{taylor_exact_2017}).\footnote{More specifically, the PEP framework requires the performance measure as well as the constraints to be linear in (i) the function values at the iterates and (ii) the Gram matrix of a set of vectors consisting of the iterates and their gradients.} In fact, solving the PEP is equivalent to solving a degree-2 SOS program, which can be viewed as the dual of the PEP~\citep{tan2021analysis}. In the unconstrained setting, the performance measure is a degree-2 polynomial -- the squared norm of the operator, and that is why one can either use the PEP (as in~\citep{gorbunov_extragradient_2021}) or a degree-2 SOS to certify its monotonicity (Theorem~\ref{thm:monotone hamiltonian at unconstrained}).
In the constrained setting, we use the squared \projhamnospace~ to measure the algorithm's progress, which in our original formulation is 
a degree 4 polynomial, 
making the PEP framework not directly applicable.\footnote{The \projham is the square root of a rational function and can only be even harder to handle.} As the SOS approach can accommodate polynomial objectives and constraints of any degree, we could  directly apply it to certify the monotonicity of the \projham in the constrained setting, although the resulting proof is complex. With the new formulation of the squared \projham (Lemma~\ref{lemma:property of tangent residual}), we manage to simplify our proof and derive it using a degree-2 SOS program. It is also not hard to see that one can apply the PEP framework on the new formulation of the squared tangent residual, and the resulting program is the dual program of our degree-2 SOS program. We believe an interesting future direction is to understand whether there are natural settings in optimization where degree-2 SOS programs are provably insufficient and higher degree SOS programs are necessary.

\citet{lessard_analysis_2016} analyze first-order iterative algorithms for convex optimization using a technique inspired by  the stability analysis from control theory.
They model first-order iterative algorithms using discrete-time dynamical systems and search over quadratic potential functions that satisfy a set of Integral Quadratic Constraints (IQC). 
\citet{zhang_unified_21} extend the IQC framework to study smooth and \emph{strongly} monotone VIs in the unconstrained setting. 

\notshow{
\argyrisnote{
\cite{lessard_analysis_2016,zhang_unified_21} analyze first-order iterative algorithms for (constrained) minimax games / convex optimization by using a technique inspired by control theory and stability analysis.
They model first-order iterative algorithms using discrete-time system and search over quadratic potential functions that satisfy a set of Integral Quadratic Constraints (IQC).
We believe that the IQC framework cannot handle the analysis of our results,
since the potential function we use - the squared tangent residual - is unclear how to be reformulated into a quadratic form over constraints that can be encoded into the IQC framework.
}

\weiqiangnote{\cite{zhang_unified_21} analyse several first-order methods for smooth and \emph{strongly} monotone VIs in the unconstrained setting using Integral Quadratic Inequalities (IQC). In the objective of determining the optimal exponential decay rate of parameterized quadratic potential functions,  they introduce sufficient quadratic constraints which can be turned into solving SDPs.}
}

\paragraph{SOS Programming and Analysis of Iterative Methods.} SOS programming has been employed in the design and analysis of algorithms in convex optimization. To the best of our knowledge, these results only concern minimization of smooth and strongly-convex functions in the unconstrained setting.
\citet{fazlyab2018design}  propose a framework to search the optimal parameters of the algorithm, e.g., step size.
They use SOS programming to search over quadratic potential functions and parameters of the algorithm  with the goal of optimizing the exponential decay rate of the potential function.
\citet{tan2021analysis} proposes to use SOS programming to study the convergence rates of first-order methods in unconstrained convex optimization. 

\paragraph{Simultaneous Result on Last-Iterate Convergence of OGDA in the Unconstrained Setting.} Shortly after we obtained the last-iterate convergence rate for OGDA in the constrained setting, we learned in early March, 2022 from private communication that Eduard Gorbunov, Gauthier Gidel, and Adrien Taylor had been working on the same problem. At the time of the communication, they could obtain the same last-iterate convergence rate for OGDA in the unconstrained case using a different method based on PEP.


\notshow{
Finally, there are some works that use SOS programming to analyse/design algorithms for convex optimization~\citep{tan2021analysis,fazlyab2018design}.
, but only in the unconstrained setting or strong convex setting. Besides, they all analyse or search over quadratic potential functions, which can also be handled by the PEP framework. 
To the best of our knowledge, we are the first to analyse a non quadratic potential function, demonstrating the flexibility and power of our SOS programming based approach.}

\vspace{-.1in}
\section{Preliminaries}\label{sec:VI}
We consider the Euclidean Space $(\R^n, \InNorms{\cdot})$, where $\InNorms{\cdot}$ is the $\ell_2$ norm and $\InAngles{\cdot,\cdot}$ denotes inner product on $\R^n$. We use $z[i]$ to denote the $i$-th coordinate of $z \in \R^n$ and $e_i$ to denote the unit vector such that $e_i[j] := \ind[i=j]$, the dimension of $e_i$ is going to be clear from context. 
\paragraph{Variational Inequality. }Given a closed convex set $\Z \subseteq \R^n$ and an operator $F: \Z \rightarrow \R^n$, a variational inequality problem is defined as follows: find $z^* \in \Z$ such that 
\vspace{-.1in}
\begin{align}
    \InAngles{F(z^*), z^*-z} \le 0 \quad \forall z \in \Z.
\end{align}

\vspace{-.1in}
We say $F$ is \textbf{monotone} if $\InAngles{F(z)-F(z'),z-z'} \ge 0$, for all $z, z' \in \Z$, and is $L$-Lipschitz if, $\|F(z) - F(z')\| \le L \|z-z'\|$ for all $z, z' \in \Z$.

\vspace{-.1in}
\begin{remark}
One sufficient condition for  such a $z^*$ to exist is when the set $\Z$ is bounded, but there are also other sufficient conditions that apply to unbounded $\Z$. See~\citep{facchinei_finite-dimensional_2007} for more details. Throughout this paper, we only consider monotone VIs that have a solution.
\end{remark}

\vspace{-.15in}
\begin{definition}[Gap Function]\label{def:gap function}
A standard way to measure the performance of $z \in \Z$  is by its gap function defined as $\gap_{\mathcal{Z},F,D}(z) = \max_{z'\in \Z\cap \mathcal{B}(z,D)} \InAngles{F(z),z-z'}$, where $D>0$ is a fixed parameter and $\mathcal{B}(z,D)$ is a ball with radius $D$ centered at $z$.\footnote{Sometimes the gap function is defined to allow $z'$ to take value in $\Z\cap \mathcal{B}(z^*,\InNorms{z^*-z_0})$, where $z_0$ is the starting point of the EG algorithm, and  $z^*$ is the solution that the last iterate of the algorithm converges to. Due to Lemma~\ref{lem:EG Best-iterate}, $\InNorms{z_k-z^*}\leq \InNorms{z_0-z^*}$ for every $k$, so $\mathcal{B}(z_k,2\InNorms{z^*-z_0})$ contains $\mathcal{B}(z^*,\InNorms{z^*-z_0})$.} When $\Z, F$ and $D$ are clear from context, we omit the subscripts and write the gap function at $z$ as $\gap(z)$.
\end{definition}
\vspace{-.1in}

\paragraph{The Extragradient Algorithm.} Let 
$z_k$ be the $k$-th iterate of the Extragradient (EG) Algorithm. The update rule of EG is as follows:
\vspace{-.1in}
\begin{align}
     z_{k+\frac{1}{2}} &= \Pi_\Z\left[z_k-\eta F(z_k)\right]=\arg \min_{z\in \Z} \| z-\left(z_k-\eta F(z_k)\right)\|,\label{def:k+1/2-th step}\\
     z_{k+1} & = \Pi_\Z\left[z_k-\eta F(z_{k+\frac{1}{2}})\right]= \arg \min_{z\in \Z} \left \| z-\left(z_k-\eta F(z_{k+\frac{1}{2}})\right)\right \|. \label{def:k+1-th step}
 \end{align}
 
\paragraph{The Optimistic Gradient Descent-Ascent Algorithm.} Let $z_k$ and $w_k$ be the $k$-th iterate of the Optimistic Gradient Descent Ascent Method (OGDA) method. Let $z_0, w_0$ be arbitrary starting points in $\Z$. The update rule is as follows:
 \begin{align}\label{alg:OGDA}
     w_{k+1} &= \Pi_\Z\left[z_k-\eta F(w_k)\right]=\arg \min_{z\in \Z} \| z-\left(z_k-\eta F(w_k)\right)\|\\
     z_{k+1} & = \Pi_\Z\left[z_k-\eta F(w_{k+1})\right]= \arg \min_{z\in \Z} \left \| z-\left(z_k-\eta F(w_{k+1})\right)\right \|
\end{align}

Note that the OGDA method only requires $T$ queries to the operator at $\{w_k\}_{0\le k \le T-1}$, while EG requires $2T$ queries to the operator. Additionally, OGDA is a more natural algorithm in multi-agent online learning settings~\citep{cesa-bianchi_prediction_2006,shalev2012online}, as players play according to the strategy profile $w_k$ and receive gradient feedback $F(w_k)$ to compute $z_k$ and $w_{k+1}$, while EG requires players to play every half step $z_{k}$ and $z_{k+\frac{1}{2}}$ to get gradient feedback. Finally, as we mentioned before, OGDA is a no-regret algorithm while EG is not. 

In Section~\ref{sec:best iterate} and \ref{sec:EG last iterate everything}, we present the analysis of the EG algorithm and provide a detailed description about how to use SOS programming to derive the proof.
The analysis of the OGDA algorithm is a simple extension of our analysis to the EG algorithm.
We formally state the results of OGDA in Section~\ref{sec:OGDA} and postpone the detailed analysis of OGDA in Section~\ref{appx:OGDA}.

\vspace{-.3in}

\paragraph{Sum-of-Squares (SOS) Polynomials.} Let $\boldsymbol{x}$ be a set of variables. We denote the set of real polynomials in $\boldsymbol{x}$ as $\R[\boldsymbol{x}]$. We say that polynomial $p(\boldsymbol{x}) \in \R[\boldsymbol{x}]$ is an SOS polynomial
if there exist polynomials $\{q_i(\boldsymbol{x})\in \R[\boldx] \}_{i\in[M]}$ such that $p(\boldsymbol{x})=\sum_{i\in[M]}q_i(\boldsymbol{x})^2$. We denote the set of SOS polynomials in $\boldx$ as $\sos[\boldx]$. Note that any SOS polynomial is non-negative.
 
\paragraph{SOS Programs.} In Figure~\ref{fig:sos template} we present a generic formulation of a degree-$d$ SOS program.
The SOS program takes three kinds of input,
a polynomial $g(\boldsymbol{x})$,
sets of polynomials $\{g_i(\boldsymbol{x})\}_{i\in[M]}$ and  $\{h_i(\boldsymbol{x})\}_{i\in[N]}$.
Each polynomial in $\{g(\boldsymbol{x})\}\cup\{g_i(\boldsymbol{x})\}_{i\in[M]}\cup\{h_i(\boldsymbol{x})\}_{i\in[N]}$ has degree of at most $d$.
The SOS program searches for an SOS polynomial in the set of polynomials $\Sigma = \{g(\boldsymbol{x}) + \sum_{i\in[M]}p_i(\boldsymbol{x})\cdot g_i(\boldsymbol{x}) + \sum_{i\in[N]}q_i(\boldsymbol{x})\cdot h_i(\boldsymbol{x})\}$,
where $\{p_i(\boldsymbol{x})\}_{i\in[M]}$ and $\{q_i(\boldsymbol{x})\}_{i\in[N]}$ are polynomials in $\boldsymbol{x}$. 
More precisely for each $i\in[M]$, $p_i(\boldsymbol{x})$ is an SOS polynomial with degree at most $d-\deg(g_i(\boldsymbol{x}))$. For each $i\in[N]$,
$q_i(\boldsymbol{x})$ is a (not necessarily SOS) polynomial with degree at most $d-\deg(g_i(\boldsymbol{x}))$.
Note that any polynomial in set $\Sigma$ is at most degree $d$. In our applications, we choose $\{g_i(\boldsymbol{x})\}_{i\in[M]}$ to be non-positive polynomials and $\{h_i(\boldsymbol{x})\}_{i\in[N]}$ to be polynomials that are equal to $0$. Any feasible solution to the program certifies the non-negativity of $g(\boldsymbol{x})$.

\begin{figure}[h!]
\colorbox{MyGray}{
\begin{minipage}{\textwidth} {
\small
\noindent\textbf{Input Fixed Polynomials.} 
\begin{itemize}
\itemsep0em 
\item Polynomial $g(\boldsymbol{x})$
\item Polynomial $g_i(\boldsymbol{x})\in \R[\boldsymbol{x}]$ for all $i\in[M]$.
\item Polynomial $h_i(\boldsymbol{x})\in \R[\boldsymbol{x}]$ for all $i\in[N]$.
\end{itemize}
\noindent\textbf{Decision Variables of the SOS Program:}
\begin{itemize}
\itemsep0em 
\item $p_{i}(\boldsymbol{x})\in \sos[\boldsymbol{x}]$ is an SOS polynomial with degree at most $d-\deg\InParentheses{g_i}$, for all $i\in[M]$.
\item $q_{i}(\boldsymbol{x})\in \R[\boldsymbol{x}]$ is a polynomial with degree at most $d-\deg\InParentheses{h_i}$ , for all $i\in[N]$.
\end{itemize}
\textbf{Constraints of the SOS Program:}
\begin{align*}
g(\boldsymbol{x}) + \sum_{i\in [M]}p_i(\boldsymbol{x})\cdot g_i(\boldsymbol{x})
+ \sum_{i\in [N]}q_i(\boldsymbol{x})\cdot h_i(\boldsymbol{x})\in \sos[\boldsymbol{x}]
\end{align*}
}
\vspace{-.15in}
\end{minipage}} 
\caption{Generic degree $d$ SOS program.}\label{fig:sos template}
\vspace{-.1in}
\end{figure}

\paragraph{Roadmap of the Paper.} In \Cref{sec:performance measure}, we introduce our new performance measure -- the tangent residual and prove some of its properties.
In \Cref{sec:best iterate}, we show that the EG algorithm enjoys best-iterate convergence.
In \Cref{sec:EG last iterate everything}, we strengthen the convergence guarantee for the EG algorithm and obtain the tight last-iterate convergence rate by showing that the tangent residual (\Cref{def:projected hamiltonian}) is non-increasing across the iterations of the EG algorithm.
The last-iterate convergence rate for the tangent residual also implies a last-iterate convergence rate for the gap function (\Cref{def:gap function}) and the natural residual (\cref{def:natural residual}) as shown in \Cref{lem:hamiltonian to gap} and \Cref{lem:projected hamiltonian dominates natural residue}.
In \Cref{sec:OGDA}, we further prove the tight last-iterate convergence rate for the OGDA algorithm.
The analysis of the OGDA algorithm follows the same steps as in the analysis of the EG algorithm, and we postpone most of the details in Appendix~\ref{appx:OGDA}.

\section{The \capprojham~and Its Properties}\label{sec:performance measure}

We formally introduce our performance measure the \projham. As discussed in Section~\ref{sec:intro performance measure}, many standard and natural performance measures, i.e., the natural residual, $\InNorms{z_k-z_{k+1/2}}$,\footnote{$\InNorms{z_k-z_{k+1/2}}$ is proportional to the norm of the operator mapping introduced in~\citep{diakonikolas_halpern_2020}.} $\InNorms{z_{k}-z_{k+1}}$, $\max_{z\in \Z} \InAngles{F(z), z_k-z}$ and $\max_{z\in \Z} \InAngles{F(z_k), z_k-z}$, are unfortunately non-decreasing for EG. See Appendix~\ref{sec:counterexamples} for numerical examples.

\vspace{-.1in}
\begin{definition}[Unit Normal Cone]\label{def:normal cone}
Given a closed convex set $\Z\subseteq \R^n$ and a point $z\in \Z$,
we denote by $\normal_\Z(z)=\InAngles{v\in \R^n:\InAngles{v,z'-z}\leq 0, \forall z'\in \Z}$ the normal cone of $\Z$ at point $z$
and by $\unitnormal_\Z(z)=\{v\in \normal_\Z (z):\InNorms{v}\leq 1\}$ the intersection of the unit ball with the the normal cone of $\Z$ at $z$.
Note that $\unitnormal_{\mathcal{Z}}(z)$ is nonempty and compact for any $z\in \Z$, as $(0,\ldots,0)\in \unitnormal_{\Z}(z)$.
\end{definition}


\vspace{-.2in}
\begin{definition}[\capprojham]\label{def:projected hamiltonian}
Given an operator $F:\Z\rightarrow \R^n$ and a closed convex set $\Z$, let  $T_\Z(z) := \{ z'\in \R^n: \InAngles{z',a} \le 0, \forall a \in \C_{\Z}(z)\}$ be the tangent cone of $z$, and define $J_\Z(z) := \{z\} + T_\Z(z)$.
The \projham of $F$ at $z\in \Z$ is defined
as $r^{tan}_{(F,\Z)}(z):=\InNorms{\Pi_{J_\Z(z)}[z - F(z)] - z}.$ An equivalent definition is 
$r^{tan}_{(F,\Z)}(z):=\sqrt{
\|F(z)\|^2 - \max_{\substack{a\in \unitnormal_\Z(z),\\ \InAngles{F(z),a}\le 0}}\langle a, F(z) \rangle ^2}$.

\end{definition}
\vspace{-.1in}
\begin{remark}
 We show the equivalence of the two definitions of \projham in Lemma~\ref{lem:equivalent hamiltonian}. For the rest of the paper, we may use either of the two equivalent definitions depending on which one is more convenient.

\end{remark}

When the convex set $\Z$ and the operator $F$ are clear from context,
we are going to omit the subscript and denote 
the unit normal cone as $\unitnormal(z)=\unitnormal_\Z(z)$ and the \projham as $\Ham(z) = r^{tan}_{(F,\Z)}(z)$. Although the definition is slightly technical, one can think of the \projham as the norm of another operator $\widehat{F}$, which is $F$ projected to all directions that are not ``blocked'' by the boundary of $\Z$ if one takes an infinitesimally small step $\epsilon\cdot F(z)$, which is the same as projecting $F$ to $J_\Z(z)$. Intuitively, if the \projham is small, then the next iterate will not be far away from the current one. 



Next, we formally define the natural residual associated with the instance formally stated in Definition~\ref{def:natural residual}, and show how it is related to the \projhamnospace.

\begin{definition}\label{def:natural residual}
Consider an instance $\I$ of the variational inequality problem on convex set $\Z\subseteq \R^n$ and monotone operator $F:\Z\rightarrow \R^n$.
For $z\in \Z$, 
the natural map and natural residual associated with $\I$ is defined as follows
$$
F^{nat}_K(z) = z - \Pi_\Z(z- F(z)), \qquad r^{nat}_{(F,\Z)}(z) = \InNorms{F^{nat}_K(z)}.
$$
\end{definition}
Given an instance of the monotone VI constrained on convex set $\Z\subseteq\R^n$ and operator $F:\Z\rightarrow\R^n$,
point $z^*$ is a solution of the monotone VI iff $r_{(F,\Z)}^{nat}(z^*)=0$.
In Lemma~\ref{lem:projected hamiltonian dominates natural residue}, we show that the \projham upper bounds the the natural residual. See \Cref{fig:comparision between PH and NR} for illustration of how the \projham~relates to the natural residual.
\begin{lemma}\label{lem:projected hamiltonian dominates natural residue}
Consider an instance $\I$ of the variational inequality problem on convex set $\Z\subseteq \R^n$ and monotone operator $F:\Z\rightarrow \R^n$.
For any $z\in \Z$,  $r_{(F,\Z)}^{tan}\geq r_{(F,\Z)}^{nat}(z)$.
\end{lemma}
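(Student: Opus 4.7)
The plan is to explicitly decompose $-F(z)$ into its tangential and normal components via Moreau's decomposition, identify the tangent residual with the tangential piece, and then use the variational characterization of the Euclidean projection onto $\Z$ to squeeze the natural residual underneath the tangential piece.

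First I would invoke Moreau's decomposition for the mutually polar cones $T_\Z(z)$ and $N_\Z(z)$: write $-F(z) = u + w$ with $u \in T_\Z(z)$, $w \in N_\Z(z)$, and $\langle u,w\rangle = 0$, where $u = \Pi_{T_\Z(z)}(-F(z))$ and $w = \Pi_{N_\Z(z)}(-F(z))$. With this in hand, I would identify $r^{tan}(z)$ with $\|u\|$: since $J_\Z(z) = z + T_\Z(z)$, the projection $\Pi_{J_\Z(z)}[z-F(z)]$ minimizes $\|z+v - (z-F(z))\| = \|v+F(z)\|$ over $v\in T_\Z(z)$, which is exactly the projection of $-F(z)$ onto $T_\Z(z)$. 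Hence $\Pi_{J_\Z(z)}[z-F(z)] = z+u$ and $r^{tan}(z) = \|u\|$.

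Second, let $q := \Pi_\Z(z - F(z)) - z$, so that $r^{nat}(z) = \|q\|$. The first-order optimality condition for the projection onto $\Z$ reads
\begin{equation*}
\langle (z - F(z)) - (z+q),\, z' - (z+q)\rangle \le 0 \quad \forall z' \in \Z.
\end{equation*}
Plugging in $z' = z \in \Z$ and simplifying gives $\|q\|^2 + \langle F(z), q\rangle \le 0$, i.e., $\|q\|^2 \le \langle -F(z), q\rangle = \langle u + w, q\rangle$. Because $q = \Pi_\Z(z-F(z)) - z$ lies in $\Z - z$ while $w \in N_\Z(z)$, the defining property of the normal cone forces $\langle w, q\rangle \le 0$. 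Combining these observations with Cauchy--Schwarz yields
\begin{equation*}
\|q\|^2 \le \langle u, q\rangle \le \|u\|\cdot\|q\|,
\end{equation*}
so $\|q\| \le \|u\|$, i.e., $r^{nat}_{(F,\Z)}(z) \le r^{tan}_{(F,\Z)}(z)$.

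I do not expect a serious obstacle: once Moreau's decomposition is applied and one notices that the shift of $J_\Z(z)$ by $z$ reduces projection onto $J_\Z(z)$ to projection onto $T_\Z(z)$, the remaining inequality is a one-line consequence of the variational characterization of $\Pi_\Z$ together with the $\langle w,q\rangle \le 0$ sign check. A naive attempt (e.g.\ simply using $\Z \subseteq J_\Z(z)$ and a ``projection onto a larger set is closer'' argument) actually fails because $z - F(z)$ need not lie on the same side of $z$ as the projections; the clean Moreau/VI route above is what sidesteps that pitfall.
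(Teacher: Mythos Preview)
Your proof is correct and takes a somewhat different route from the paper's. The paper sets $a_1 = z - F(z) - \Pi_\Z(z-F(z))$, uses the projection inequality at $z' = z$ to obtain $r^{nat}(z)^2 \le \|F(z)\|^2 - \|a_1\|^2$, and then---contrary to your closing remark---\emph{does} exploit the inclusion $\Z \subseteq J_\Z(z)$, but applied to residuals rather than to distances from $z$: the larger set $J_\Z(z)$ gives the smaller residual $\|z-F(z)-\Pi_{J_\Z(z)}(z-F(z))\| \le \|a_1\|$, and the cone structure of $J_\Z(z)$ (essentially Moreau again) converts this into $r^{tan}(z)^2 = \|F(z)\|^2 - \|z-F(z)-\Pi_{J_\Z(z)}(z-F(z))\|^2 \ge \|F(z)\|^2 - \|a_1\|^2$. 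Your approach via the Moreau split $-F(z) = u + w$, the sign check $\langle w, q\rangle \le 0$, and Cauchy--Schwarz on $\langle u, q\rangle$ is shorter and never needs the set inclusion or the residual comparison; once the pieces are named it is a one-line estimate. The paper's version is a bit longer but makes the geometric picture---comparing the perpendicular distances from $z-F(z)$ to $\Z$ and to $J_\Z(z)$---more explicit.
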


\notshow{
\begin{proof}
According to Lemma~\ref{lem:equivalent hamiltonian-2}, $H_{(F,\Z)}(z) = \InNorms{\Pi_{J_\Z(z)}\inteval{z - F(z)} - z}^2$, where $J_\Z(z) := z + T_\Z(z)$ and $T_\Z(z) = \{ z'\in \R^n: \InAngles{z',a} \le 0, \forall a \in \C_{\Z}(z)\}$ is the tangent cone of $z$. 
Observe that $z - \Pi_{\Z}\inteval{z - F(z)} = -\Pi_{\Z-z}\inteval{- F(z)}$ and similarly 
$z - \Pi_{J_\Z(z)}\inteval{z - F(z)} = -\Pi_{J_\Z(z)-z}\inteval{- F(z)}$.
Since $J_\Z(z)\supseteq \Z$, then $J_\Z(z)-z\supseteq \Z-z$ which further implies that
$$
\InNorms{-\Pi_{j_\Z(z)-z}\inteval{- F(z)}}
\geq 
\InNorms{-\Pi_{\Z-z}\inteval{- F(z)}}
\Leftrightarrow
\InNorms{z - \Pi_{J_\Z(z)}\inteval{z - F(z)}}^2
\geq 
\InNorms{z - \Pi_{\Z}\inteval{z - F(z)}}^2,
$$
which concludes the proof.
\end{proof}
}

\begin{proof}

\notshow{
Fix $z\in \Z$ and consider convex set $\bZ= \{\bz\in \R^n: \bz - z \in \Z\}$ and monotone operator $\bF(\bz)=F(\bz - z): \bZ\rightarrow \R^n$.
Observe that $H_{(F,\Z)}(z)=H_{(\bF,\bZ)}(0)$ and $r_{(F,\Z)}^{nat}(z)=r_{(\bF,\bZ)}^{nat}(0)$.

Let $w = \Pi_\Z (-F(z))$, 
$a_1=- \bF (0)-w\in \C_{\bZ}(w)$, $a_2=\argmax_{\substack{a\in \unitnormal_{\bZ}(0),\\ \InAngles{\bF (0),a}\le 0}}\langle a, \bF (0) \rangle ^2$ 
and consider convex set $\bZ_1 = \{\bz\in \R^n: \InAngles{a_1,\bz -w} \leq 0 \land \InAngles{a_2,\bz} \leq 0\}$, $ \bZ_2 = \{\bz\in \R^n: \InAngles{a_2,\bz } \leq 0\}$.
Observe that $\bF^{nat}_{\bZ}(0)=w = \Pi_{\Z_1} (-F(z))$ and
$\Z_2 \supseteq \Z_1 \supseteq \Z$,
which implies that $r_{(\bF,\bZ)}^{nat}(0)=\InNorms{\Pi_{\Z_1} (-F(z))}\leq \InNorms{ \Pi_{\Z_2} (-F(z))}$.
We conclude the proof by noticing that $\InNorms{ \Pi_{\Z_2} (-F(z))}^2= H_{(\bF,\bZ)}(0)$.
}

Let $w=\Pi_\Z(z-F(z))$ and $a_1=z-F(z)-w$. Observe that $$\InNorms{F(z)}^2=\InNorms{z-w}^2+\InNorms{a_1}^2-2\InAngles{z-w,a_1}.$$ Since $r^{nat}_{(F,\Z)}(z)^2=\InNorms{z-w}^2$ and $\InAngles{z-w,a_1}\leq 0$, we have $r^{nat}_{(F,\Z)}(z)^2\leq \InNorms{F(z)}^2-\InNorms{a_1}^2$. 

According to Lemma~\ref{lem:equivalent hamiltonian}, $\Ham(z)^2 = \InNorms{\Pi_{J_\Z(z)}\inteval{z - F(z)} - z}^2$, where $J_\Z(z) := z + T_\Z(z)$ and $T_\Z(z) = \{ z'\in \R^n: \InAngles{z',a} \le 0, \forall a \in \C_{\Z}(z)\}$ is the tangent cone of $z$. Since $J_Z(z)$ is a cone with origin $z$, we have $\displaystyle\InAngles{z-F(z)-\Pi_{J_\Z(z)}\inteval{z - F(z)}, z-\Pi_{J_\Z(z)}\inteval{z - F(z)}}=0$, and $\displaystyle\InNorms{\Pi_{J_\Z(z)}\inteval{z - F(z)} - z}^2=\InNorms{F(z)}^2-\InNorms{z-F(z)-\Pi_{J_\Z(z)}\inteval{z - F(z)}}^2$. As $\Z\subseteq J_\Z(z)$, $\displaystyle\InNorms{a_1}^2\geq \InNorms{z-F(z)-\Pi_{J_\Z(z)}\inteval{z - F(z)}}^2$, which implies that $$r^{nat}_{(F,\Z)}(z)^2\leq \InNorms{\Pi_{J_\Z(z)}\inteval{z - F(z)} - z}^2=\Ham(z)^2.$$


\notshow{
Let $w = \Pi_\bZ (-\bF(0))$, 
$a_1=-\bF(\bz)-w\in \C_\bZ(w)$, 
$a_2=\argmax_{\substack{a\in \unitnormal_\bZ(0),\\ \InAngles{\bF(0),a}\le 0}}\langle a, \bF(\bz) \rangle ^2$
and consider convex set $\bZ_1 = \{\bz\in \R^n: \InAngles{a_1,\bz -w} \leq 0 \land \InAngles{a_2,\bz } \leq 0\}$, $ \bZ_2 = \{\bz\in \R^n: \InAngles{a_1,\bz -w} \leq 0\}$

Let $w = \Pi_\Z (z-F(z))$, $a_1=z-F(z)-w\in \C_\Z(w)$, $a_2=\argmax_{\substack{a\in N_\Z(z),\\ \InAngles{F(z),a}\le 0}}\langle a, F(z) \rangle ^2$ and consider convex set $ \Z_1 = \{\bz\in \R^n: \InAngles{a_1,\bz -w} \leq 0 \land \InAngles{a_2,\bz -z} \leq 0\}$, $ \Z_2 = \{\bz\in \R^n: \InAngles{a_1,\bz -w} \leq 0\}$.
Observe that $w = \Pi_{\Z_1} (z-F(z))= \Pi_{\Z_2} (z-F(z))$ and
$\Z_2 \supseteq \Z_1 \supseteq \Z$.

We conclude the proof by noting that $r_{(F,\Z)}^{nat}(z)=r_{(F,\bZ)}^{nat}(z)$ and $H_{(F,\Z)}(z)=H_{(F,\bZ)}(z)$.
}
\end{proof}

\begin{figure}[ht]
\centering
\includegraphics[width=0.5\textwidth]{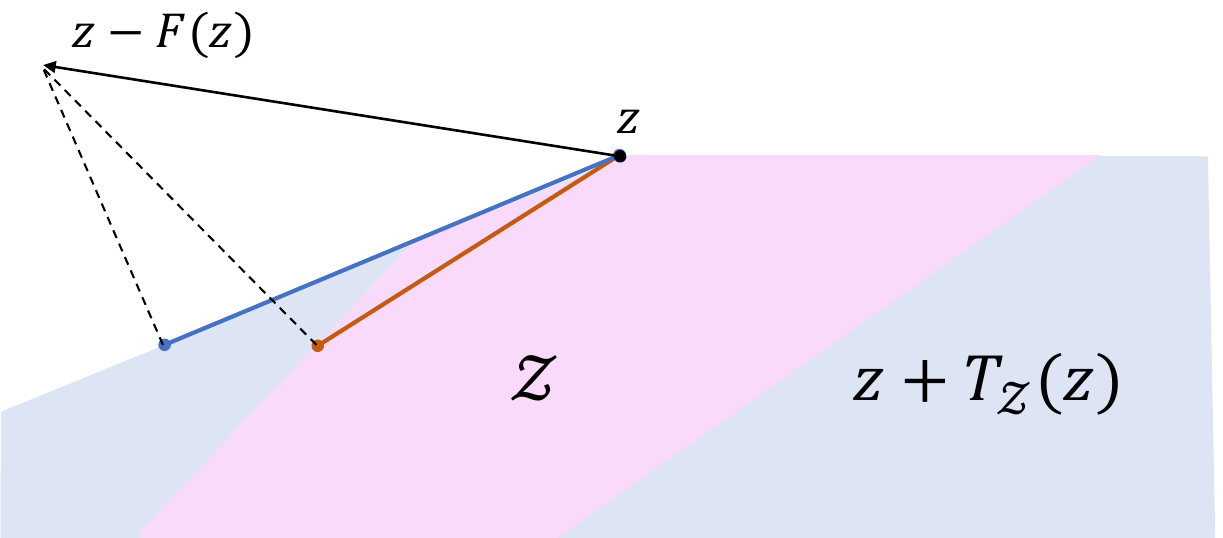}
\caption{Illustration of the \projham and the natural residual. The blue line represents the tangent residual and the red line represents the natural residual. It is clear that the tangent residual upper bounds the natural residual.}
\label{fig:comparision between PH and NR}
\end{figure}

Due to the above lemma, an upper bound of the \projham is also an upper bound of the natural residual. 
We show in \Cref{thm:monotone hamiltonian} the monotonicity of the \projham of the EG updates, which is the technical core of our analysis and implies the $O(\frac{1}{\sqrt{T}})$ convergence rate of the \projhamnospace. 
As a result, we also show that the natural residual has a $O(\frac{1}{\sqrt{T}})$ convergence rate. One may be tempted to directly use the natural residual as the convergence measure. However, from our  numerical experiments, the natural residual of the EG updates is not monotone, and we believe that it is very challenging to directly establish the convergence rate for the natural residue without using the \projham as a proxy.

In the next lemma, we argue why a small \projham implies a small gap function, hence an approximate solution of the variational inequality. The proof is postponed to Appendix~\ref{sec:additional prelim}.
\begin{lemma}\label{lem:hamiltonian to gap}[Adapted from the proof of Theorem~10 in \citep{golowich_last_2020}.]
Given a closed convex set $\Z \in \R^n$, an operator $F : \Z \rightarrow \R^n$ and $z \in \Z$, we have 
\[
\gap_{\mathcal{Z},F,D}(z) := \max_{z'\in \Z\cap \mathcal{B}(z,D)} \InAngles{F(z),z-z'}  \le D  \cdot r^{tan}_{(F,\Z)}(z).
\]
\vspace{-.02in}
\noindent If we have a convex-concave function $f(z):\Z \rightarrow \R$ such that $z = (x,y)$, $\Z = \X\times \Y$ where $\X$ and $\Y$ are closed convex sets, 
let $F(x,y) = \begin{pmatrix}
  \nabla_x f(x,y)\\
  -\nabla_y f(x,y)
\end{pmatrix}$,
then the duality gap at $z$ with respect to $\X'$ and $\Y'$ is
$\dg^{\X',\Y'}_f(z)
    := \max_{y'\in \Y'} f(x,y')- \min_{x'\in \X'} f(x',y) 
    \le D \sqrt{2} \cdot r^{tan}_{(F,\Z)}(z)$, if $z\in \X'\times \Y'$ and the diameters of $\X'$ and $\Y'$ are both upper bounded by $D$.\footnote{When $\X$ and $\Y$ are 
    bounded, we choose $\X'=\X$ and $\Y'=Y$, otherwise the convention is to choose $\X'$ and $\Y'$ to be $\X\cap \mathcal{B}(x^*,2\InNorms{x_0-x^*})$ and $\Y\cap \mathcal{B}(y^*,2\InNorms{y_0-y^*})$ respectively, where $(x^*,y^*)$ is a saddle point.}
\end{lemma}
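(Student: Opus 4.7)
The plan is to exploit the first characterization of the tangent residual in Definition~\ref{def:projected hamiltonian}, $r^{tan}_{(F,\Z)}(z) = \InNorms{\Pi_{J_\Z(z)}[z - F(z)] - z}$, and to decompose $-F(z)$ orthogonally into a tangent component and a normal component at $z$. Since $J_\Z(z) = z + T_\Z(z)$ is just a translation of the tangent cone, the projection satisfies $\Pi_{J_\Z(z)}[z - F(z)] = z + w$ where $w := \Pi_{T_\Z(z)}[-F(z)]$, and therefore $r^{tan}_{(F,\Z)}(z) = \InNorms{w}$.

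Next I would invoke the Moreau decomposition with respect to the mutually polar cones $T_\Z(z)$ and $N_\Z(z)$, writing $-F(z) = w + u$ with $u := \Pi_{N_\Z(z)}[-F(z)] \in N_\Z(z)$ and $\InAngles{w, u} = 0$. By the definition of the normal cone, $\InAngles{u, z' - z} \leq 0$ for every $z' \in \Z$. Combining these facts gives
\begin{align*}
\InAngles{F(z), z - z'} = \InAngles{w, z' - z} + \InAngles{u, z' - z} \leq \InAngles{w, z' - z} \leq \InNorms{w}\cdot \InNorms{z' - z} \leq D \cdot r^{tan}_{(F,\Z)}(z)
\end{align*}
for every $z' \in \Z \cap \mathcal{B}(z, D)$, and taking the maximum over $z'$ proves the first inequality.

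For the duality gap bound, I would linearize using convexity of $f$ in $x$ and concavity in $y$: for any $(x', y') \in \X' \times \Y'$,
\begin{align*}
f(x, y') - f(x', y) \leq \InAngles{\nabla_y f(x,y), y' - y} + \InAngles{\nabla_x f(x,y), x - x'} = \InAngles{F(z), z - z'}.
\end{align*}
Maximizing the left side over $(x', y')$ yields $\dg^{\X', \Y'}_f(z) \leq \max_{z' \in \X' \times \Y'} \InAngles{F(z), z - z'}$. Because $\X' \times \Y' \subseteq \Z$ and each factor has diameter at most $D$, every such $z'$ satisfies $\InNorms{z' - z}^2 \leq 2D^2$, so applying the first part with $D$ replaced by $D\sqrt{2}$ delivers the claimed $D\sqrt{2}\cdot r^{tan}_{(F,\Z)}(z)$ bound.

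The only nontrivial ingredient is the Moreau decomposition onto the polar cones $T_\Z(z)$ and $N_\Z(z)$, which is a standard fact from convex analysis. I do not foresee any real obstacle: once the decomposition is in hand, everything reduces to Cauchy-Schwarz and the defining inequality of the normal cone.
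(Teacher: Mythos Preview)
Your proposal is correct and follows essentially the same approach as the paper: decompose $F(z)$ into a ``tangent'' part whose norm equals $r^{tan}_{(F,\Z)}(z)$ and a ``normal'' part which contributes non-positively to $\InAngles{F(z),z-z'}$, then apply Cauchy--Schwarz. The only cosmetic difference is that you invoke the Moreau decomposition (equivalently, characterizations 3--5 in Lemma~\ref{lem:equivalent hamiltonian}) to obtain the splitting in one shot, whereas the paper uses the $\argmax$ characterization over $\unitnormal_\Z(z)$ and splits into the two cases $\InAngles{a,F(z)}\ge 0$ for all $a$ versus the existence of some $a$ with $\InAngles{a,F(z)}<0$; your route is slightly cleaner but not materially different.
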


 \vspace{-.1in}
 \section{Best-Iterate Convergence of EG with Constant Step Size}\label{sec:best iterate}
\vspace{-.05in}
En route to establish the last-iterate convergence of the EG algorithm, we first show a weaker guarantee known as the best-iterate convergence. Lemma~\ref{lem:EG Best-iterate} implies that after running EG for $T$ steps,
there exists an iteration $t^*\in[T]$ where $\InNorms{z_{t^*}-z_{t^*+\half}}^2 \leq O(\frac{1}{T})$. The proof can be found in~\citep{korpelevich_extragradient_1976} and ~\citep{facchinei_finite-dimensional_2007} (included in \Cref{appx:best iterate} for completeness).
\vspace{-.1in}
 \begin{lemma}[\citep{korpelevich_extragradient_1976,facchinei_finite-dimensional_2007}]\label{lem:EG Best-iterate}
 Let $\Z$ be a closed convex set in $\mathbb{R}^n$, $F(\cdot)$ be a monotone and $L$-Lipschitz operator mapping from $\Z$ to $\mathbb{R}^n$. For any solution $z^*$ of the monotone VI, that is, $\langle F(z^*), z^*-z\rangle \leq 0$ for all $z\in \Z$. For all $k$,
\vspace{-.1in}
 \begin{align}\label{eq:best-iterate-2}
     \left\|z_k-z^*\right\|^2\geq  \left\|z_{k+1}-z^*\right\|^2+(1-\eta^2 L^2) \|z_k-z_{k+\half}\|^2.
 \end{align}
 \end{lemma}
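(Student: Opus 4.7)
The plan is to execute a standard ``descent lemma for EG'' argument built entirely from the two projection inequalities, monotonicity, Lipschitzness, and a final AM-GM step. The starting point will be the three-point characterization of the projection: since $z_{k+1}=\Pi_\Z[z_k-\eta F(z_{k+\half})]$, for every $z\in\Z$ we have $\langle z_k-\eta F(z_{k+\half})-z_{k+1},z-z_{k+1}\rangle\le 0$. Plugging in $z=z^*$ and rearranging via the identity $2\langle z_k-z_{k+1},z_{k+1}-z^*\rangle=\|z_k-z^*\|^2-\|z_k-z_{k+1}\|^2-\|z_{k+1}-z^*\|^2$ yields
\[
\|z_{k+1}-z^*\|^2\le \|z_k-z^*\|^2-\|z_k-z_{k+1}\|^2-2\eta\langle F(z_{k+\half}),z_{k+1}-z^*\rangle.
\]

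Next I would decompose the cross term as $\langle F(z_{k+\half}),z_{k+1}-z^*\rangle=\langle F(z_{k+\half}),z_{k+\half}-z^*\rangle+\langle F(z_{k+\half}),z_{k+1}-z_{k+\half}\rangle$. The first piece is nonnegative: by monotonicity $\langle F(z_{k+\half})-F(z^*),z_{k+\half}-z^*\rangle\ge 0$, and since $z^*$ solves the VI we have $\langle F(z^*),z_{k+\half}-z^*\rangle\ge 0$. For the second piece I would invoke the projection inequality for $z_{k+\half}=\Pi_\Z[z_k-\eta F(z_k)]$ at the point $z_{k+1}\in\Z$, giving $\langle z_k-z_{k+\half},z_{k+1}-z_{k+\half}\rangle\le \eta\langle F(z_k),z_{k+1}-z_{k+\half}\rangle$. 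Writing $F(z_{k+\half})=F(z_k)+(F(z_{k+\half})-F(z_k))$ then lets me replace $-2\eta\langle F(z_k),z_{k+1}-z_{k+\half}\rangle$ by $-2\langle z_k-z_{k+\half},z_{k+1}-z_{k+\half}\rangle$ and to bound the remaining Lipschitz term by $2\eta L\|z_k-z_{k+\half}\|\cdot\|z_{k+1}-z_{k+\half}\|$ via Cauchy--Schwarz.

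The last move is the algebraic identity $2\langle z_k-z_{k+\half},z_{k+1}-z_{k+\half}\rangle=\|z_k-z_{k+\half}\|^2+\|z_{k+1}-z_{k+\half}\|^2-\|z_k-z_{k+1}\|^2$. Substituting it back makes the $\|z_k-z_{k+1}\|^2$ terms cancel, leaving
\[
\|z_{k+1}-z^*\|^2\le \|z_k-z^*\|^2-\|z_k-z_{k+\half}\|^2-\|z_{k+1}-z_{k+\half}\|^2+2\eta L\|z_k-z_{k+\half}\|\cdot\|z_{k+1}-z_{k+\half}\|.
\]
An AM-GM bound $2\eta L\|z_k-z_{k+\half}\|\cdot\|z_{k+1}-z_{k+\half}\|\le \eta^2L^2\|z_k-z_{k+\half}\|^2+\|z_{k+1}-z_{k+\half}\|^2$ then collapses everything to the claimed inequality.

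No step is really an obstacle here; the only subtle point is bookkeeping the two projection inequalities so that the $\|z_k-z_{k+1}\|^2$ and $\|z_{k+1}-z_{k+\half}\|^2$ terms cancel cleanly, after which the Lipschitz constant is absorbed by AM-GM to produce the $(1-\eta^2L^2)$ factor.
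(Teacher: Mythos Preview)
Your proof is correct and follows essentially the same route as the paper's: the paper begins with the Pythagorean inequality for the projection defining $z_{k+1}$ (which is equivalent to your three-point projection inequality plus the polarization identity), then drops $\langle F(z_{k+\half}),z_{k+\half}-z^*\rangle$ via monotonicity and the VI condition, invokes the projection inequality for $z_{k+\half}$ at $z_{k+1}$, bounds the remaining cross term by Cauchy--Schwarz and $L$-Lipschitzness, and closes with the same AM-GM step to produce the $(1-\eta^2L^2)$ factor.
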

\notshow{
\begin{claim}\label{fact:close}
If $\eta \cdot L < 1$, then for all $k$
\begin{eqnarray*}
||z_{k+1/2} - z_{k+1}||\leq& \eta L ||z_k-z_{k+1/2}|| \\
||z_k-z_{k+1/2}||\leq& \frac{1}{1-\eta L}||z_k-z_{k+1}||\\
||z_{k+1/2} - z_{k+1}||\leq& \frac{\eta L}{1-\eta L}||z_k-z_{k+1}|| 
\end{eqnarray*}

\end{claim}
\begin{proof}
We remind that $z_{k+\frac{1}{2}} = \Pi_\Z\left[z_k-\eta F(z_k)\right]$ and $z_{k+1} = \Pi_\Z\left[z_k-\eta F(z_{k+\frac{1}{2}})\right]$.
By non-expansiveness of the projection operator and the $L$-Lipschitzness of function $F$,
we have that $||z_{k+1/2}-z_{k+1}||\leq ||\eta (F(z_{k+1/2})- F(z_k))||\leq \eta L ||z_k - z_{k+1/2}||$.
By triangle inequality we have:
$$
||z_k-z_{k+1/2}|| \leq ||z_k - z_{k+1}|| + ||z_{k+1/2}-z_{k+1}|| \leq ||z_k - z_{k+1}|| + \eta L||z_k-z_{k+1/2}||
$$
which implies that $||z_k-z_{k+1/2}||\leq \frac{1}{1-\eta L}||z_k-z_{k+1}||$.
The second part of the statement follows by $||z_{k+1/2}-z_{k+1}||\leq \eta L ||z_k - z_{k+1/2}||$.
\end{proof}
}

\vspace{-.15in}
In Lemma~\ref{lem:bound hamiltonian by distance}, we relate $\InNorms{z_k-z_{k+\half}}$ with the \projham at $z_{k+1}$, and derive the best-iterate convergence guarantee in terms of the \projham in Lemma~\ref{lem:best iterate hamiltonian}. 
The proofs of Lemma~\ref{lem:bound hamiltonian by distance} and~\ref{lem:best iterate hamiltonian} are postponed to~\Cref{appx:best iterate}.


\vspace{-.15in}
\begin{lemma}\label{lem:bound hamiltonian by distance}
For all $k$, $\Ham(z_{k+1}) \leq \left(1+\eta L + (\eta L )^2 \right)\frac{||z_k-z_{k+1/2}||}{\eta}$.
\end{lemma}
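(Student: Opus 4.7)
The plan is to bound $\Ham(z_{k+1})$ by producing an explicit vector $n \in \normal_\Z(z_{k+1})$ such that $-F(z_{k+1}) - n$ is small. Using the equivalent definition, $\Ham(z_{k+1}) = \InNorms{\Pi_{T_\Z(z_{k+1})}[-F(z_{k+1})]}$. By Moreau's decomposition of $-F(z_{k+1})$ into its orthogonal components along the polar cones $T_\Z(z_{k+1})$ and $\normal_\Z(z_{k+1})$, we have $\Pi_{T_\Z(z_{k+1})}[-F(z_{k+1})] = -F(z_{k+1}) - \Pi_{\normal_\Z(z_{k+1})}[-F(z_{k+1})]$, and because $\Pi_{\normal_\Z(z_{k+1})}[-F(z_{k+1})]$ is by definition the closest point in $\normal_\Z(z_{k+1})$ to $-F(z_{k+1})$, this yields the key inequality $\Ham(z_{k+1}) \le \InNorms{-F(z_{k+1}) - n}$ for every $n \in \normal_\Z(z_{k+1})$.

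The EG update rule immediately supplies a useful such $n$. By the first-order optimality condition for $z_{k+1} = \Pi_\Z[z_k - \eta F(z_{k+\half})]$, the vector $z_k - \eta F(z_{k+\half}) - z_{k+1}$ lies in $\normal_\Z(z_{k+1})$, and since the normal cone is closed under positive scaling, so does $n := \tfrac{1}{\eta}(z_k - z_{k+1}) - F(z_{k+\half})$. Substituting this $n$,
\[
-F(z_{k+1}) - n \;=\; \tfrac{1}{\eta}(z_{k+1} - z_k) \;+\; F(z_{k+\half}) - F(z_{k+1}),
\]
and so by the triangle inequality and $L$-Lipschitzness of $F$,
\[
\Ham(z_{k+1}) \;\le\; \tfrac{1}{\eta}\InNorms{z_{k+1} - z_k} \;+\; L\InNorms{z_{k+\half} - z_{k+1}}.
\]

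The remaining step is to control both norms on the right by $\InNorms{z_k - z_{k+\half}}$. Since $z_{k+\half} = \Pi_\Z[z_k - \eta F(z_k)]$ and $z_{k+1} = \Pi_\Z[z_k - \eta F(z_{k+\half})]$, nonexpansiveness of projections together with $L$-Lipschitzness of $F$ gives $\InNorms{z_{k+\half} - z_{k+1}} \le \eta L\InNorms{z_k - z_{k+\half}}$, after which a single triangle inequality yields $\InNorms{z_{k+1} - z_k} \le (1 + \eta L)\InNorms{z_k - z_{k+\half}}$. Plugging in,
\[
\Ham(z_{k+1}) \;\le\; \tfrac{1+\eta L}{\eta}\InNorms{z_k - z_{k+\half}} + \eta L^2\InNorms{z_k - z_{k+\half}} \;=\; \tfrac{1 + \eta L + (\eta L)^2}{\eta}\InNorms{z_k - z_{k+\half}}.
\]

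The only nontrivial step is the first one: recognizing that the projection step in EG \emph{exactly} manufactures a normal-cone element that differs from $-F(z_{k+1})$ by a quantity governed by the step $z_{k+1} - z_k$ and the operator's local variation $F(z_{k+\half}) - F(z_{k+1})$. This is the conceptual reason the tangent residual is the right convergence measure in the constrained setting: $\InNorms{F}$ itself cannot be made small near a boundary solution, but subtracting off any normal-cone vector is free, and the EG update hands us the right one.
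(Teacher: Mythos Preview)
Your proof is correct and uses the same core idea as the paper: the projection step $z_{k+1}=\Pi_\Z[z_k-\eta F(z_{k+\half})]$ supplies a normal-cone element at $z_{k+1}$, which one then compares to $-F(z_{k+1})$ via Lipschitzness and the bound $\InNorms{z_{k+\half}-z_{k+1}}\le\eta L\InNorms{z_k-z_{k+\half}}$.

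The difference is packaging. You invoke the variational characterization $\Ham(z)=\min_{a\in \normal_\Z(z)}\InNorms{F(z)+a}$ (quantity~6 in Lemma~\ref{lem:equivalent hamiltonian}) and plug in $n=\tfrac{1}{\eta}(z_k-z_{k+1})-F(z_{k+\half})$ directly, which gives the bound in one line. The paper instead works from the original definition (quantity~1), normalizes the same vector to a unit normal $a$, builds an orthogonal companion $a_\perp$, and bounds the two components $\InAngles{a,F(z_{k+1})}$ and $\InAngles{a_\perp,F(z_{k+1})}$ separately before recombining them into $\InNorms{\tfrac{z_k-z_{k+1}}{\eta}+\epsilon}$ with $\epsilon=F(z_{k+1})-F(z_{k+\half})$; it also carries a separate case for when the projection residual vanishes. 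Both routes land on the same intermediate inequality $\Ham(z_{k+1})\le\tfrac{1}{\eta}\InNorms{z_k-z_{k+1}}+\InNorms{\epsilon}$ and the same final constant. Your version is shorter and avoids the case split; the paper's version has the minor advantage of never needing the cone-scaling step (it only uses a unit-norm $a$), but that is immaterial here since $\eta>0$.
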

\vspace{-.05in}

In lemma~\ref{lem:best iterate hamiltonian}, we argue the \projham has a best-iterate convergence with rate $O(\frac{1}{\sqrt{T}})$.
\vspace{-.15in}
\begin{lemma}\label{lem:best iterate hamiltonian}
Let $\Z$ be a closed convex set in $\mathbb{R}^n$, $F(\cdot)$ be a monotone and $L$-Lipschitz operator mapping from $\Z$ to $\mathbb{R}^n$. Suppose the step size of the EG algorithm $\eta\in(0,\frac{1}{L})$, then for any solution $z^*$ of the monotone VI
and any integer $T>0$,
there exists $t^*\in [T]$ such that:
$$
\InNorms{z_{t^*}-z_{t^*+\half}}^2
\leq 
\frac{1}{T}\frac{\InNorms{z_0-z^*}^2}{1-(\eta L)^2},
\qquad \textsc{ and }\qquad
\Ham(z_{t^*+1})
\leq
\frac{1+\eta L + (\eta L )^2}{\eta}
\frac{1}{\sqrt{T}}\frac{\InNorms{z_0-z^*}}{\sqrt{1-(\eta L)^2}}.
$$
\end{lemma}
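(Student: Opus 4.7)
The plan is to combine \Cref{lem:EG Best-iterate} with \Cref{lem:bound hamiltonian by distance} via a standard averaging (pigeonhole) argument. Since the step size satisfies $\eta L < 1$, the coefficient $1-(\eta L)^2$ in \Cref{lem:EG Best-iterate} is strictly positive, so the recurrence $\|z_{k+1}-z^*\|^2 \le \|z_k-z^*\|^2 - (1-\eta^2 L^2)\|z_k-z_{k+1/2}\|^2$ can be used to telescope.

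First, I would sum the inequality from \Cref{lem:EG Best-iterate} over $k=0,1,\dots,T-1$. After cancellation this yields
\[
(1-\eta^2 L^2)\sum_{k=0}^{T-1}\|z_k-z_{k+1/2}\|^2 \;\le\; \|z_0-z^*\|^2 - \|z_T-z^*\|^2 \;\le\; \|z_0-z^*\|^2,
\]
since $\|z_T-z^*\|^2 \ge 0$. Dividing by $T$ gives an upper bound on the average value of $\|z_k-z_{k+1/2}\|^2$ over $k\in\{0,\dots,T-1\}$; by an averaging argument there must exist some index $t^*$ (which I index in $[T]$, matching the statement, by renaming $k\mapsto k$ appropriately) for which
\[
\|z_{t^*}-z_{t^*+1/2}\|^2 \;\le\; \frac{1}{T}\cdot\frac{\|z_0-z^*\|^2}{1-(\eta L)^2}.
\]
This establishes the first claimed bound.

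For the second bound, I would plug this best-iterate distance bound into \Cref{lem:bound hamiltonian by distance}, which gives $\Ham(z_{t^*+1}) \le (1+\eta L+(\eta L)^2)\cdot \|z_{t^*}-z_{t^*+1/2}\|/\eta$. Taking a square root of the first bound and substituting yields exactly
\[
\Ham(z_{t^*+1}) \;\le\; \frac{1+\eta L + (\eta L)^2}{\eta}\cdot\frac{1}{\sqrt{T}}\cdot\frac{\|z_0-z^*\|}{\sqrt{1-(\eta L)^2}},
\]
as required.

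There is no genuine obstacle here: both ingredients (the one-step distance contraction and the bound relating $\Ham$ at $z_{k+1}$ to $\|z_k-z_{k+1/2}\|$) have already been set up, and the only real work is the telescoping sum and a square root. The one thing to be slightly careful about is the indexing convention (whether $t^*$ ranges over $\{0,\dots,T-1\}$ or $[T]=\{1,\dots,T\}$), but this is cosmetic and is handled by a re-labeling that absorbs the off-by-one.
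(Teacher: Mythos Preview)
Your proposal is correct and matches the paper's proof essentially line for line: telescope \Cref{lem:EG Best-iterate}, apply an averaging argument to extract $t^*$, and then invoke \Cref{lem:bound hamiltonian by distance}. The only cosmetic difference is that the paper sums over $k=0,\dots,T$ (then still divides by $T$), whereas you sum over $k=0,\dots,T-1$; both yield the stated bound.
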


\section{Last-Iterate Convergence of EG with Constant Step Size}
\label{sec:EG last iterate everything}
In this section, we show that the last-iterate convergence rate is $O(\frac{1}{\sqrt{T}})$. In particular, we  prove that the \projham is non-increasing, which, in combination with Lemma~\ref{lem:best iterate hamiltonian}, implies the last-iterate convergence rate of EG. To establish the monotonicity of the \projhamnospace, we combine SOS programming with the low-dimensionality of the EG update rule. To better illustrate our approach, we first prove the result in the unconstrained setting (\Cref{sec:warm up}), then show how to generalize it to the constrained setting (\Cref{sec:last-iterate EG}).
\vspace{-.1in}
\subsection{Warm Up: Unconstrained Case}
\label{sec:warm up}
As a warm-up, we consider the unconstrained setting where $\Z = \R^n$. Although the last-iterate convergence rate is known in the unconstrained setting due to~\citep{golowich_last_2020, gorbunov_extragradient_2021}, we provide a simpler proof that also permits a larger step size. Our analysis holds for any step size $\eta\in (0,\frac{1}{L})$, while the previous analysis requires $\eta\leq \frac{1}{\sqrt{2}L}$~\citep{gorbunov_extragradient_2021}.

Let $z_k$ be the $k$-th iterate of the EG method. 
In Theorem~\ref{thm:monotone hamiltonian at unconstrained}, we show that the \projham is monotone in the unconstrained setting.\footnote{In the unconstrained setting, the \projham is simply the  norm of the operator $r^{tan}_{(F,\R^n)}(z) = \InNorms{F(z)}$.} Our approach is to apply SOS programming to search for a certificate of non-negativity for $\InNorms{F(z_{k})}^2-\InNorms{F(z_{k+1})}^2$ for every $k$, over the semialgebraic set defined by the following polynomial constraints in variables $\left\{z_i[\ell],\eta F(z_{i})[\ell]\right\}_{i\in \{k,k+\half,k+1\}, \ell\in[n]}$:
\vspace{-.1in}
\begin{align}
 &z_{k+\frac{1}{2}}[\ell]- z_k[\ell]+\eta F(z_{k})[\ell]=0,\quad z_{k+1}[\ell] - z_k[\ell] + \eta F(z_{k+\frac{1}{2}})[\ell]=0,~\forall \ell\in [n], \quad\text{(EG Update)}&
 \notag\\
&\InNorms{\eta F(z_i)-\eta F(z_j)}^2-(\eta L)^2\InNorms{z_i-z_j}^2\leq 0 ,\quad \forall i,j\in \{k,k+\half,k+1\}, \qquad\text{(Lipschitzness)}&\notag\\
    &\InAngles{\eta F(z_i)-\eta F(z_j),z_j-z_i}\leq 0,\quad \forall i,j\in \{k,k+\half,k+1\}. \qquad\text{(Monotonicity)}&\notag
\end{align}
\vspace{-.25in}

We always multiply $F$ with $\eta$ in the constraints as it will be convenient later. We use $K$ to denote the set $\{k,k+\half,k+1\}$. To obtain a certificate of non-negativity, we apply SOS programming to search for a degree-2 SOS proof. More specifically, we want to find non-negative coefficients $\{\lambda^*_{i,j},\mu^*_{i,j}\}_{\substack{i>j, i,j\in K}}$ and degree-1 polynomials $\gamma_1^{(\ell)}(\bw)$ and $\gamma_2^{(\ell)}(\bw)$ in $\mathbb{R}[\bw]$ for each $\ell\in [n]$, where $\bw:=\{z_i[\ell],\eta F(z_{i})[\ell]\}_{i\in K, \ell\in[n]}$,
such that the following is an~SOS~polynomial:
\vspace{-.1in}
\begin{align}\label{eq:sos program unconstrained identity}
& \InNorms{\eta F(z_{k})}^2 - \InNorms{\eta F(z_{k+1})}^2+ \sum_{\substack{i> j \text{ and } i,j\in K}}\lambda^*_{i,j} \cdot \InParentheses{\InNorms{\eta F(z_i)-\eta F(z_j)}^2-(\eta L)^2\InNorms{z_i-z_j}^2 } \notag\\ 
&+\sum_{\substack{i> j \text{ and } i,j\in K}}\mu^*_{i,j} \cdot \InAngles{\eta F(z_i)-\eta F(z_j)),z_j-z_i}+\sum_{\ell\in[n]}\gamma_1^{(\ell)}(\bw)(z_{k+\frac{1}{2}}[\ell]- z_k[\ell]+\eta F(z_{k})[\ell])\notag\\
&+\sum_{\ell\in[n]}\gamma_2^{(\ell)}(\bw)(z_{k+1}[\ell] - z_k[\ell] + \eta F(z_{k+\frac{1}{2}})[\ell]).
\end{align}
\vspace{-.25in}

Due to constraints satisfied by the EG iterates, the non-negativity of Expression~\eqref{eq:sos program unconstrained identity} clearly implies that $\InNorms{F(z_{k})}^2-\InNorms{F(z_{k+1})}^2$ is non-negative. However, Expression~\eqref{eq:sos program unconstrained identity} is in fact an infinite family of polynomials rather than a single one. Expression~\eqref{eq:sos program unconstrained identity} corresponds to a different polynomial for every integer $n$. 
To directly search for the solution, we would need to solve an infinitely large SOS program, which is clearly infeasible. By exploring the symmetry in Expression~\eqref{eq:sos program unconstrained identity}, we show that it suffices to solve a constant size SOS program. Let us first expand Expression~\eqref{eq:sos program unconstrained identity} as follows:
\vspace{-.1in}
\begin{align}\label{eq:unconstrained expanded}
    & \sum_{\ell\in[n]}\Big( \left(\eta F(z_{k})[\ell]\right)^2 - \left(\eta F(z_{k+1})[\ell]\right)^2+ \sum_{\substack{i> j \text{ and } i,j\in K}}\lambda^*_{i,j}  \InParentheses{\InParentheses{\eta F(z_i)[\ell]-\eta F(z_j)[\ell]}^2-(\eta L)^2\InParentheses{z_i[\ell]-z_j[\ell]}^2 } \notag\\ 
& \qquad + \sum_{\substack{i> j \text{ and } i,j\in K}}\mu^*_{i,j}  \InParentheses{\eta F(z_i)[\ell]-\eta F(z_j)[\ell])}\InParentheses{z_j[\ell]-z_i[\ell]}+  \gamma_1^{(\ell)}(\bw)(z_{k+\frac{1}{2}}[\ell]- z_k[\ell]+\eta F(z_{k})[\ell])  \notag\\
& \qquad  +\gamma_2^{(\ell)}(\bw)(z_{k+1}[\ell] - z_k[\ell] + \eta F(z_{k+\frac{1}{2}})[\ell]) \Big).
\end{align}
\vspace{-.15in}

What we will argue next is that, due to the \emph{symmetry across coordinates}, it suffices to directly search for a single SOS proof that shows that each of the $n$ summands in Expression~\eqref{eq:unconstrained expanded} is an SOS polynomial. More specifically, we make use of the following two key properties. \setword{(i)}{property:unconstrained one} For any $\ell,\ell'\in [n]$, the $\ell$-th summand and $\ell'$-th summand are identical subject to a change of variable;\footnote{Simply replace $\{z_i[\ell]\}_{i\in K}$ and $\{\eta F(z_i)[\ell]\}_{i\in K}$ with $\{z_i[\ell']\}_{i\in K}$ and $\{\eta F(z_i)[\ell']\}_{i\in K}$.} \setword{(ii)}{property:unconstrained two} the $\ell$-th summand only depends on the coordinate $\ell$, i.e., variables in $\{z_i[\ell],\eta F(z_{i})[\ell]\}_{i\in K}$ and does not involve any other coordinates.\footnote{We mainly care about the polynomials arise from the constraints. Although $\gamma_1^{(\ell)}(\bw)$ and $\gamma_2^{(\ell)}(\bw)$ could depend on other coordinates, we show that it suffices to consider polynomials in $\{z_i[\ell],\eta F(z_{i})[\ell]\}_{i\in K}$.} 
We solve  the following~SOS~program, whose solution can be used to construct $\{\lambda^*_{i,j},\mu^*_{i,j}\}_{\substack{i>j, i,j\in K}}$ and $\{\gamma_1^{(\ell)}(\bw), \gamma_2^{(\ell)}(\bw)\}_{\ell\in[n]}$ so that each of the summands in Expression~\eqref{eq:unconstrained expanded} is an SOS polynomial.

\begin{figure}[h!]
\colorbox{MyGray}{
\begin{minipage}{\textwidth} {
\small
\noindent\textbf{Input Fixed Polynomials.} We use $\boldsymbol{x}$ to denote $(x_0,x_1,x_2)$ and $\by$ to denote $(y_0,y_1,y_2)$. Interpret $x_i$ as $z_{k+\frac{i}{2}}[\ell]$ and $y_i$ as $\eta F(z_{k+\frac{i}{2}})[\ell]$ for $0\leq i\leq 2$. Observe that  $h_1(\boldsymbol{x},\by)$ and $h_2(\boldsymbol{x},\by)$ come from the EG update rule on coordinate $\ell$. $g^{L}_{i,j}(\boldsymbol{x},\by)$ and $g^{m}_{i,j}(\boldsymbol{x},\by)$ come from the $\ell$-th coordinate's contribution in the Lipschitzness and monotonicity constraints.
\begin{itemize}
\itemsep0em 
\item $h_1(\boldsymbol{x},\by):=x_1-x_0+y_0$ and $h_2(\boldsymbol{x},\by):=x_2-x_0+y_1$. 
\item  $g^L_{i,j}(\boldsymbol{x},\by):=(y_i-y_j)^2-C\cdot (x_i-x_j)^2$ for any $0\leq j<i\leq 2$.\footnote{$C$ represents $(\eta L)^2$. Larger $C$ corresponds to a larger step size and makes the SOS program harder to satisfy. Through binary search, we find that the largest possible value of $C$ is $1$ while maintaining the feasibility of the SOS program.} 

\item  $g^m_{i,j}(\boldsymbol{x},\by):=(y_i-y_j)(x_j-x_i)$ for any $0\leq j<i\leq 2$.
\end{itemize}
\noindent\textbf{Decision Variables of the SOS Program:}
\begin{itemize}
\itemsep0em 
\item $p^L_{i,j} \ge 0$, and $p^m_{i,j} \ge 0$, for all $0\leq j<i\leq 2$.
\item $q_1(\boldsymbol{x},\by)$ and $q_2(\boldsymbol{x},\by)$ are two degree $1$ polynomials in $\mathbb{R}[\boldsymbol{x},\by]$.
\end{itemize}
\textbf{Constraints of the SOS Program:}
\begin{equation}\label{eq:unconstrained SOS}
\begin{array}{ll@{}ll}
\text{s.t.}  & \displaystyle 
y_0^2 - y_2^2 +\sum_{2\geq i>j\geq 0} p^L_{i,j}\cdot g^L_{i,j}(\boldsymbol{x},\by)+\sum_{2\geq i>j\geq 0}p^m_{i,j}\cdot  g^m_{i,j}(\boldsymbol{x},\by)\\
&
\qquad\qquad\qquad\qquad\qquad\qquad+q_1(\boldsymbol{x},\by)\cdot h_1(\boldsymbol{x},\by)+q_2(\boldsymbol{x},\by)\cdot h_2(\boldsymbol{x},\by)\in \sos[\boldsymbol{x},\by].

\end{array}
\end{equation}}
\end{minipage}} 
\caption{Our SOS program in the unconstrained setting.}\label{fig:sos program unconstrained}
\vspace{-.15in}
\end{figure}
\vspace{-.05in}

\notshow{
\begin{figure}[h!]
\colorbox{MyGray}{
\begin{minipage}{\textwidth} {
\small
\noindent\textbf{Input Fixed Polynomials.} 
\begin{itemize}
\item $S_= =\{g_i(\bx)\}_{i\in[M]}$.
\item $S_\leq =\{h_i(\bx)\}_{i\in[M']}$.

\item  $q_{i,j}(\boldsymbol{x},\by):=(y_i-y_j)(x_j-x_i)$ for any $0\leq j<i\leq 2$.
\end{itemize}
\noindent\textbf{Decision Variables of the SOS Program:}
\begin{itemize}
\item $\lambda_{i,j} \ge 0$, and $\mu_{i,j} \ge 0$, for all $0\leq j<i\leq 2$.
\item $\varphi_1(\boldsymbol{x},\by)$ and $\varphi_2(\boldsymbol{x},\by)$ are two degree $1$ polynomials in $\mathbb{R}[\boldsymbol{x},\by]$.
\end{itemize}
\textbf{Constraints of the SOS Program:}
\begin{equation*}
\begin{array}{ll@{}ll}
\text{s.t.}  & \displaystyle y_2^2-y_0^2+\sum_{2\geq i>j\geq 0} \lambda_{i,j}\cdot p_{i,j}(\boldsymbol{x},\by)+\sum_{2\geq i>j\geq 0}\mu_{i,j}\cdot  q_{i,j}(\boldsymbol{x},\by)\\&\qquad\qquad\qquad\qquad\qquad\qquad+\varphi_1(\boldsymbol{x},\by)u_1(\boldsymbol{x},\by)+\varphi_2(\boldsymbol{x},\by)u_2(\boldsymbol{x},\by)\in \sos[\boldsymbol{x},\by].
\end{array}
\end{equation*}}
\end{minipage}} 
\caption{Our SOS program in the unconstrained setting.}\label{fig:sos program unconstrained}
\end{figure}
}

The proof of the following theorem is based on a feasible solution to the SOS program in Figure~\ref{fig:sos program unconstrained}.

\begin{theorem}\label{thm:monotone hamiltonian at unconstrained}
Let $F: \R^n \rightarrow \R^n$ be a monotone and $L$-Lipschitz operator. Then for any $k \in \mathbb{N}$, the EG algorithm with step size $\eta \in (0,\frac{1}{L})$ satisfies $\|F(z_k)\|^2 \ge \|F(z_{k+1})\|^2$. 
\end{theorem}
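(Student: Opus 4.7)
The plan is to construct a sum-of-squares certificate that $\|\eta F(z_k)\|^2 - \|\eta F(z_{k+1})\|^2 \geq 0$ follows as a non-negative combination of the polynomial constraints available from the problem: the EG update equalities $z_{k+\half} - z_k + \eta F(z_k) = 0$ and $z_{k+1} - z_k + \eta F(z_{k+\half}) = 0$, the monotonicity inequalities $\InAngles{\eta F(z_i) - \eta F(z_j), z_i - z_j} \geq 0$, and the Lipschitz inequalities $(\eta L)^2 \|z_i - z_j\|^2 - \|\eta F(z_i) - \eta F(z_j)\|^2 \geq 0$ for $i, j \in \{k, k+\half, k+1\}$. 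The key observation I will exploit is that, in the unconstrained setting, the EG update as well as all of these constraints decompose across coordinates, so a certificate for the one-coordinate projection of the inequality will lift to one for the full $n$-dimensional statement.

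Concretely, I would first identify variables $\boldsymbol{x} = (x_0, x_1, x_2)$ and $\by = (y_0, y_1, y_2)$ with the values $z_{k + i/2}[\ell]$ and $\eta F(z_{k + i/2})[\ell]$ on a generic coordinate $\ell$. The per-coordinate form of the target inequality is then exactly the feasibility of the degree-$2$ SOS program in Figure~\ref{fig:sos program unconstrained} with $C = (\eta L)^2$, that is, find non-negative scalars $\{p^L_{i,j}, p^m_{i,j}\}_{i > j}$ and degree-$1$ polynomials $q_1, q_2 \in \R[\boldsymbol{x}, \by]$ such that the expression in~\eqref{eq:unconstrained SOS} is SOS. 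Because this program is finite and dimension-free, it can be solved once (either numerically or by ansatz) and yields an explicit polynomial identity that can be verified by direct expansion.

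Second, given a feasible solution to this finite program, I would lift the identity back to $\R^n$ by summing over coordinates. The EG-update terms cancel exactly, the squares $(x_i - x_j)^2$ and $(y_i - y_j)^2$ assemble into $\|z_i - z_j\|^2$ and $\|\eta F(z_i) - \eta F(z_j)\|^2$, the products $(y_i - y_j)(x_j - x_i)$ assemble into $-\InAngles{\eta F(z_i) - \eta F(z_j), z_i - z_j}$, and the per-coordinate SOS pieces sum to a globally non-negative quantity. This recovers identity~\eqref{eq:sos program unconstrained identity}, which expresses $\|\eta F(z_k)\|^2 - \|\eta F(z_{k+1})\|^2$ as a non-negative combination of Lipschitz slacks, monotonicity slacks, and a global SOS polynomial; each piece is non-negative under the hypotheses that $F$ is monotone, $F$ is $L$-Lipschitz, and $\eta L \leq 1$, which proves the theorem.

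The main obstacle is establishing feasibility of the small SOS program up to the tight threshold $C = 1$ (equivalently $\eta = 1/L$), since as $C$ approaches $1$ the Lipschitz slack shrinks and the combination must cancel delicately against the monotonicity multipliers and the EG-update terms. I would first try an ansatz in which $q_1, q_2$ are linear in $\by$ alone (matching the symmetry of the target $y_0^2 - y_2^2$) and binary search over $C$ to tune the non-negative multipliers; if the solver returns infeasibility at some $C$, that will sharply identify the step-size threshold. Once the small SOS instance is solved at $C = 1$, the lifting argument above yields monotonicity of $\|F(z_k)\|^2$ for every $\eta \in (0, 1/L)$, improving the range $\eta \leq 1/(\sqrt{2}\, L)$ used in~\cite{gorbunov_extragradient_2021}.
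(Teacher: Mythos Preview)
Your proposal is correct and follows exactly the paper's approach: set up the degree-$2$ SOS program of Figure~\ref{fig:sos program unconstrained}, exploit the coordinate-wise decomposition to reduce to a finite program, and lift the resulting identity back to $\R^n$. The paper's explicit solution is simpler than your ``delicate cancellation'' worry suggests: taking $p^m_{2,0}=2$, $p^L_{2,1}=1$ (all other multipliers zero) and substituting the EG updates yields the exact identity $\|F(z_k)\|^2-\|F(z_{k+1})\|^2+2\langle F(z_{k+1})-F(z_k),F(z_{k+\half})\rangle+\|F(z_{k+\half})-F(z_{k+1})\|^2-\|F(z_{k+\half})-F(z_k)\|^2=0$, so the SOS remainder is identically zero and the certificate holds at $C=1$.
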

\vspace{-.1in}
\begin{proof}
Since $F$ is monotone and $L$-Lipschitz, we have $$
   \InAngles{F(z_{k+1})-F(z_k)  , z_{k} - z_{k+1}}  \le 0$$ and $$\InNorms{F(z_{k+\frac{1}{2}})-F(z_{k+1})}^2 - L^2 \InNorms{z_{k+\frac{1}{2}} - z_{k+1}}^2 \le 0.$$
We simplify them using the update rule of EG and $\eta L<1$. In particular, we replace $z_k-z_{k+1}$ with $\eta F(z_{k+\half})$ and $z_{k+\frac{1}{2}} - z_{k+1}$ with $\eta F(z_{k+\frac{1}{2}}) - \eta F(z_k)$.

\begin{align}
   & \InAngles{ F(z_{k+1}) - F(z_k) ,F(z_{k+\frac{1}{2}})} \le 0, \label{eq:unconstrained-mon}\\
   &\InNorms{F(z_{k+\frac{1}{2}})-F(z_{k+1})}^2 -\InNorms{F(z_{k+\frac{1}{2}}) - F(z_k)}^2  \le 0 \label{eq:unconstrained-lip}.
\end{align}

In Proposition~\ref{prop:verify unconstrained} at Appendix~\ref{appx:warm up} we verify the following identity.
\begin{align*}
    \|F(z_k)\|^2 - \|F(z_{k+1})\|^2 + 2\cdot \LHSI (\ref{eq:unconstrained-mon}) + \LHSI (\ref{eq:unconstrained-lip}) = 0.
\end{align*}
Thus, $\|F(z_k)\|^2 - \|F(z_{k+1})\|^2 \ge 0$. 
\end{proof}

Corollary~\ref{cor:EG last-iterate unconstrained} is implied by combing Lemma~\ref{lem:hamiltonian to gap}, Lemma~\ref{lem:best iterate hamiltonian}, Theorem~\ref{thm:monotone hamiltonian at unconstrained} and the fact that $\eta \in \InParentheses{0,\frac{1}{L}}$.
\vspace{-.1in}
\begin{corollary}\label{cor:EG last-iterate unconstrained}
Let $F(\cdot):\R^n \rightarrow \R^n$ be a monotone and $L$-Lipshitz operator and $z^*\in \R^n$ be a solution to the variational inequality. For any $T \ge 1$, let $z_T$ be $T$-th iterate of the EG algorithm with constant step size $\eta \in (0,\frac{1}{L})$,
then
$\gap_{\R^n,F,D}(z_T)  \le   \frac{1}{\sqrt{ T}}\frac{3 D\|z_0-z^*\|}{\eta \sqrt{1-(\eta L)^2}}$.
\end{corollary}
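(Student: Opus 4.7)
The plan is to assemble the corollary directly from the three ingredients the authors have just developed: the best-iterate bound (Lemma~\ref{lem:best iterate hamiltonian}), the monotonicity of the tangent residual (Theorem~\ref{thm:monotone hamiltonian at unconstrained}), and the tangent-residual-to-gap conversion (Lemma~\ref{lem:hamiltonian to gap}). In the unconstrained setting the tangent residual simplifies to $r^{tan}_{(F,\R^n)}(z) = \|F(z)\|$, so Theorem~\ref{thm:monotone hamiltonian at unconstrained} is precisely the statement that the tangent residual is monotonically non-increasing along EG iterates.

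First, I would invoke Lemma~\ref{lem:best iterate hamiltonian} with $\Z = \R^n$: this gives the existence of some index $t^* \in [T]$ such that
\[
  r^{tan}_{(F,\R^n)}(z_{t^*+1}) \;=\; \|F(z_{t^*+1})\| \;\le\; \frac{1+\eta L + (\eta L)^2}{\eta} \cdot \frac{1}{\sqrt{T}} \cdot \frac{\|z_0-z^*\|}{\sqrt{1-(\eta L)^2}}.
\]
Second, I would lift this best-iterate bound to a last-iterate bound. Since $t^*+1 \le T$, iterating Theorem~\ref{thm:monotone hamiltonian at unconstrained} along the EG trajectory yields $\|F(z_T)\| \le \|F(z_{T-1})\| \le \cdots \le \|F(z_{t^*+1})\|$, and therefore $r^{tan}_{(F,\R^n)}(z_T)$ inherits the same upper bound.

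Third, I would apply Lemma~\ref{lem:hamiltonian to gap} at the last iterate $z_T$, which gives $\gap_{\R^n,F,D}(z_T) \le D \cdot r^{tan}_{(F,\R^n)}(z_T)$, and combine it with the last-iterate tangent-residual bound from the previous step. Finally, since $\eta \in (0,\frac{1}{L})$ we have $\eta L \in (0,1)$ and hence the elementary estimate $1 + \eta L + (\eta L)^2 < 3$, which reduces the prefactor to the claimed constant $3$, yielding
\[
  \gap_{\R^n,F,D}(z_T) \;\le\; \frac{1}{\sqrt{T}} \cdot \frac{3D\|z_0-z^*\|}{\eta\sqrt{1-(\eta L)^2}}.
\]
There is no real obstacle here: the technical heart of the argument has already been discharged in Theorem~\ref{thm:monotone hamiltonian at unconstrained} (via the SOS certificate) and in Lemma~\ref{lem:best iterate hamiltonian}. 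The only thing to keep in mind is the mild bookkeeping on indices (making sure $t^*+1 \le T$ so that monotonicity can indeed be applied $T - t^* - 1$ times) and the scalar simplification $1+\eta L+(\eta L)^2 < 3$.
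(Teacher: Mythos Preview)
Your proposal is correct and follows exactly the same route as the paper: combine Lemma~\ref{lem:best iterate hamiltonian} (best-iterate tangent-residual bound), Theorem~\ref{thm:monotone hamiltonian at unconstrained} (monotonicity of $\|F(z_k)\|$ in the unconstrained case) to pass to the last iterate, Lemma~\ref{lem:hamiltonian to gap} to convert to the gap, and the elementary simplification $1+\eta L+(\eta L)^2<3$ from $\eta L\in(0,1)$. Your remark about the index bookkeeping ($t^*+1\le T$) is the only thing to watch; it is harmless because the telescoping in the proof of Lemma~\ref{lem:best iterate hamiltonian} can equally well be stopped one step earlier so that the minimizing index satisfies $t^*\le T-1$.
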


\vspace{-.2in}
\subsection{Last-Iterate Convergence of EG with Arbitrary Convex Constraints}\label{sec:last-iterate EG}
We establish the last-iterate convergence rate of the EG algorithm in the constrained setting in this section. The plan is similar to the one in Section~\ref{sec:warm up}. First, we use the assistance of SOS programming to prove the monotonicity of the \projham (Theorem~\ref{thm:monotone hamiltonian}), then combine it with the best-iterate convergence guarantee from Lemma~\ref{lem:best iterate hamiltonian} to derive the last-iterate convergence rate  (Theorem~\ref{thm:EG last-iterate constrained}). 

Due to the constraints, proving the monotonicity of the tangent residual becomes much more challenging. The \projham in the constrained setting ( Definition~\ref{def:projected hamiltonian}) is significantly more complex than its counterpart in the unconstrained setting. In Lemma~\ref{lemma:property of tangent residual}, we introduce an auxiliary point $c(z)$ for every point $z$ that can be used to simplified the tangent residual.

\begin{lemma}\label{lemma:property of tangent residual}
Let $\Z \subseteq \R^n$ be a closed convex set and $F: \Z \rightarrow \R$ be an operator. For any $z \in \Z$, denote $c(z) := \Pi_{N(z)}\inteval{-F(z)}$ the projection of $-F(z)$ on the normal cone $N(z)$. Then we have 
\begin{itemize}
\item $r^{tan}(z) = \InNorms{F(z)+c(z)}$,
    \item $\InAngles{F(z) + c(z) , c(z)} = 0$,
    \item $\InAngles{F(z)+c(z), a} \ge 0$, $\forall a \in N(z)$.
\end{itemize}
\end{lemma}
\begin{proof}
According to the definition of $c(z)$, $r^{tan}(z) = \InNorms{F(z)+c(z)}$ follows from \Cref{lem:equivalent hamiltonian}.
Since $c(z) = \Pi_{N(z)}\inteval{-F(z)}$, we know that for all $a \in N(z)$,
\begin{align}\label{eq:proj}
    \InAngles{-F(z) - c(z), a - c(z)} \le 0.
\end{align}
Note that $c(z) \in N(z)$ and $N(z)$ is a cone. By substituting $a = 0$ and $a = 2\cdot c(z)$ in \eqref{eq:proj}, we get
\begin{align*}
    \InAngles{-F(z) - c(z), c(z)} = 0. 
\end{align*}
Therefore, for all $a \in N(z)$, we have
\begin{align*}
    \InAngles{-F(z) - c(z), a} =  \InAngles{-F(z) - c(z), a - c(z)} \le 0.
\end{align*}

\end{proof}

Next, we need to decide over which semialgebraic set that we want to certify the non-negativity of $r^{tan}(z_k)^2-r^{tan}(z_{k+1})^2$. Naturally, we would like to use all constraints of $\Z$, but there might be arbitrarily many of them. In the next paragraph, we argue how to reduce the number of constraints.

\paragraph{Reducing the Number of Constraints.} Suppose we are not given the description of $\Z\subseteq \R^n$, and we only observe one iteration of the EG algorithm.
In other words, we know $z_k$, $z_{k+\half}$, and $z_{k+1}$, as well as $F(z_k)$, $F(z_{k+\half})$, and $F(z_{k+1})$. 
To express the squared \projham at $z_k$ and the squared \projham at $z_{k+1}$, let us also assume that the vector $c_k=\Pi_{N(z_k)}\inteval{-F(z_k)}$ and $c_{k+1}=\Pi_{N(z_{k+1})}\inteval{-F(z_{k+1})}$, and according to \Cref{lemma:property of tangent residual}, we have $r^{tan}(z_k)^2 = {\InNorms{F(z_k) + c_k}^2}$, and $r^{tan}(z_{k+1})^2 = {\InNorms{F(z_{k+1}) + c_{k+1}}^2}$. Our plan is to derive a set of inequalities that must be satisfied by these vectors.
From this limited information, what can we learn about $\Z$? We can conclude that $\Z$ must lie in the intersection of the following halfspaces: 
(a) $\InAngles{c_k,z}\leq \InAngles{c_k,z_k}$. This is true because $c_k\in \normal(z_k)$. 
(b) $\InAngles{a_{k+\half},z}\geq \InAngles{a_{k+\half},z_{k+\half}}$, where $a_{k+\half}=-\InParentheses{z_k-\eta F(z_k)-z_{k+\half}}$. This is true because $z_{k+\half}=\Pi_\Z(z_k-\eta F(z_k))$, so $-a_{k+\half}\in \normal(z_{k+\half})$.
(c) $\InAngles{a_{k+1},z}\geq \InAngles{a_{k+1},z_{k+1}}$, where $a_{k+1}=-\InParentheses{z_k-\eta F(z_{k+\half})-z_{k+1}}$.
This is true because $z_{k+1}=\Pi_\Z(z_k-\eta F(z_{k+\half}))$, so $-a_{k+1}\in \normal(z_{k+1})$.
See Figure~\ref{fig:geometry of EG} 
for illustration. Additionally, due to our definition of $c_k$ and $c_{k+1}$ and Lemma~\ref{lemma:property of tangent residual}, we know that  (d) $\InAngles{\eta F(z_i) + \eta c_i , \eta c_i} = 0$ for  $i\in\{k,k+1\}$, and (e) 
$\InAngles{\eta F(z_{k+1})+\eta c_{k+1}, a_{k+1}} \leq 0$ as $-a_{k+1}\in N(z_{k+1})$.

\begin{figure}[ht]
\centering
\includegraphics[width=0.75\textwidth]{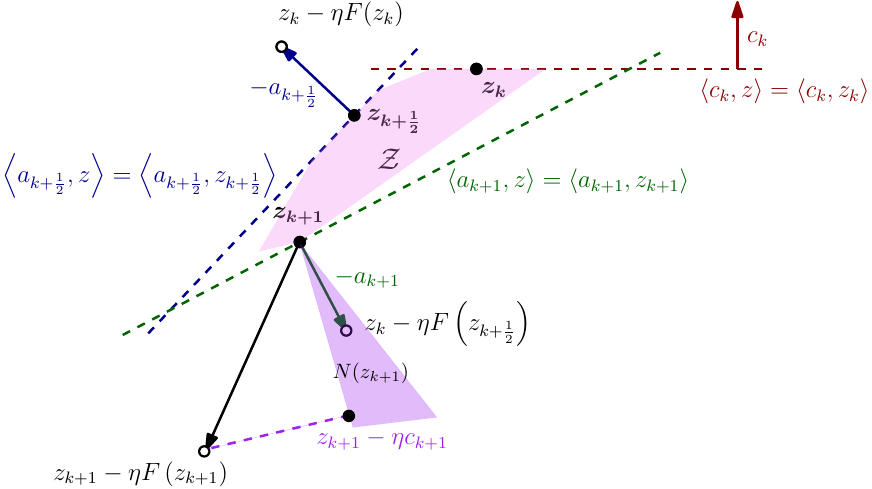}
\caption{
Reducing the number of constraints.}
\label{fig:geometry of EG}
\end{figure}

Clearly, for any $\Z$, the inequalities in (a) to (e) must hold, though there might be other inequalities that are also true. Our goal is to prove that the \projham is non-increasing even if only  inequalities (a) to (e) hold. If we can do so, then we prove that \projham is non-increasing for an arbitrary $\Z$.

\vspace{-.2in}
\paragraph{Formulation as SOS program.} 
Similar to the unconstrained case,
our plan is to search for a certificate of non-negativity of the following expression 
\begin{align}\label{eq:hardest instance main}
    \InNorms{F(z_k) + c_k}^2-\InNorms{F(z_{k+1})+c_{k+1}}^2
\end{align}
 over the semialgebraic set defined by the following polynomial constraints in variables $\left\{\left\{z_i[\ell],\eta F(z_{i})[\ell]\right\}_{i\in \{k,k+\half,k+1\}} \cup \{c_i[\ell]\}_{i\in{k,k+1}}\right\}_{ \ell\in[n]}$
\begin{align}
&\InNorms{\eta F(z_i)-\eta F(z_j)}^2-(\eta L)^2\InNorms{z_i-z_j}^2\leq 0 ,\quad &\forall i,j\in \{k,k+\half,k+1\}, \qquad\text{(Lipschitzness)}&\notag\\
&\InAngles{\eta F(z_i)-\eta F(z_j),z_j-z_i}\leq 0,\quad &\forall i,j\in \{k,k+\half,k+1\}, \qquad\text{(Monotonicity)}&\notag\\
&\InAngles{a_i,z_i-z_j}\leq 0,\quad &\forall i\in \{k+\half,k+1\},j\in \{k,k+\half,k+1\}, \qquad\InParentheses{-a_i \in \normal(z_i)}&\notag\\
&\InAngles{\eta c_i,z_j-z_i}\leq 0,\quad &\forall i\in \{k,k+1\},j\in \{k,k+\half,k+1\}, \qquad\InParentheses{c_i \in \normal(z_i)}&\notag\\
& \InAngles{\eta F(z_i) + \eta c_i , \eta c_i} = 0,\quad &\forall i\in \{k,k+1\}, \qquad\InParentheses{\Cref{lemma:property of tangent residual}}&\notag\\
& \InAngles{\eta F(z_{k+1})+\eta c_{k+1}, a_{k+1}} \leq 0,\quad & \qquad\InParentheses{\Cref{lemma:property of tangent residual}}&\notag.
\end{align}
Similar to Section~\ref{sec:EG last iterate everything}, we multiply the operators, $c_k$, and $c_{k+1}$ with $\eta$ for convenience.
Fortunately,
the dimensional-dependent Expression~\eqref{eq:hardest instance main} and semialgebraic set are symmetric across coordinates, and more specifically, satisfy the two key properties in the unconstrained case -- Property~\ref{property:unconstrained one} and~\ref{property:unconstrained two}.
Hence, we can represent all of the coordinates $\ell\geq 1$ with one coordinate in the SOS program, and we can form a constant size SOS program to search for a certificate of non-negativity for Expression~\eqref{eq:hardest instance main} as shown in Figure~\ref{fig:sos program constrained}.

In Theorem~\ref{thm:monotone hamiltonian}, we establish the monotonicity of the \projhamnospace. Our proof is based on the solution to the degree-2 SOS program concerning polynomials in $8$ variables (Figure~\ref{fig:sos program constrained}).

\begin{figure}[h!]
\colorbox{MyGray}{
\begin{minipage}{\textwidth} {
\small
\noindent\textbf{Input Fixed Polynomials.}
We use $\boldsymbol{x}$ to denote $(x_0,x_1,x_2)$, $\by$ to denote $(y_0,y_1,y_2)$ and $\boldsymbol{w}$ to denote $(w_0,w_2)$. 
Interpret $x_i$ as $z_{k+\frac{i}{2}}[\ell]$ and $y_i$ as $\eta F(z_{k+\frac{i}{2}})[\ell]$ for $0\leq i\leq 2$, $w_0$ as $\eta c_k[\ell]$ and $w_2$ as $\eta c_{k+1}[\ell]$.
Let $b_1 = -\InParentheses{x_0 -y_0-x_1}$ and $b_2=-\InParentheses{x_0-y_1-x_2}$.

\noindent\textbf{Origin of Constraints.}
$g^{L}_{i,j}(\boldsymbol{x},\by,\boldsymbol{w})$ and $g^{m}_{i,j}(\boldsymbol{x},\by,\boldsymbol{w})$ come from the $\ell$-th coordinate's contribution in the Lipschitzness and monotonicity constraints.
Similarly, $g^{b}_{i,j}(\boldsymbol{x},\by,\boldsymbol{w})$ and $g^{w}_{i,j}(\boldsymbol{x},\by,\boldsymbol{w})$ come from the $\ell$-th coordinate contribution of fact that $-a_i$ and $c_i$ are in the normal cone of $z_i$.
Finally, $h_i^w(\boldsymbol{x},\by, \boldsymbol{w})$ and $g^{r}(\boldsymbol{x},\by,\boldsymbol{w})$ comes from the $\ell$-th coordinate contribution due to the inequalities of Lemma~\ref{lemma:property of tangent residual}.
\begin{itemize}
\itemsep0em 
\item  $g^L_{i,j}(\boldsymbol{x},\by,\boldsymbol{w}):=(y_i-y_j)^2-C\cdot (x_i-x_j)^2$ for any $0\leq j<i\leq 2$.\footnote{$C$ represents $(\eta L)^2$.}

\item  $g^{m}_{i,j}(\boldsymbol{x},\by,\boldsymbol{w}):=(y_i-y_j)(x_j-x_i)$ for any $0\leq j<i\leq 2$.

\item  $g^{b}_{i,j}(\boldsymbol{x},\by,\boldsymbol{w}):=b_i\cdot (x_i-x_j)$ for any $i\in\{1,2\},0\leq j\leq 2$.

\item  $g^{w}_{i,j}(\boldsymbol{x},\by,\boldsymbol{w}):=w_i\cdot (x_j-x_i)$ for any $i\in\{0,2\},0\leq j\leq 2$.

\item  $g^{r}(\boldsymbol{x},\by,\boldsymbol{w})~:=(y_2+w_2)\cdot b_2$.

\item  $h^{w}_i(\boldsymbol{x},\by,\boldsymbol{w})~:=(y_i+w_i)\cdot w_i$ for any 
$i\in\{0,2\}$.

\end{itemize}
\noindent\textbf{Decision Variables of the SOS Program:}
\begin{itemize}
\itemsep0em 
\item $p^L_{i,j} \ge 0$, and $p^m_{i,j} \ge 0$, for all $0\leq j<i\leq 2$.
\item $p^b_{i,j}\geq 0$, for any $i\in\{1,2\}$, $0\leq j \leq 2$.
\item $p^w_{i,j}\geq 0$, for any $i\in\{0,2\}$, $0\leq j \leq 2$.
\item $p^r\geq 0$.
\item $q_0^w,q_2^w\in \R$.
\end{itemize}
\textbf{Constraints of the SOS Program:}
\begin{equation}\label{eq:constrained SOS}
\begin{array}{ll@{}ll}
\text{s.t.}  & \displaystyle 
(y_0+w_0)^2 - (y_2+w_2)^2 +\sum_{2\geq i>j\geq 0} p^L_{i,j}\cdot g^L_{i,j}(\boldsymbol{x},\by, \boldsymbol{w})+\sum_{2\geq i>j\geq 0}p^m_{i,j}\cdot  g^m_{i,j}(\boldsymbol{x},\by, \boldsymbol{w})&\\
&
\qquad+\sum_{i\in\{1,2\},2\geq j\geq 0}p^b_{i,j}\cdot g_{i,j}^{b}(\boldsymbol{x},\by,\boldsymbol{w})
+\sum_{i\in\{0,2\},2\geq j\geq 0}p^w_{i,j}\cdot g_{i,j}^{w}(\boldsymbol{x},\by,\boldsymbol{w}) & \quad\in SOS[\boldsymbol{x},\boldsymbol{y},\boldsymbol{w}]\\
&
\qquad +p^r\cdot g^r(\boldsymbol{x},\by,\boldsymbol{w}) +\sum_{i\in 
\{0,2\}}q^{w}_i\cdot h_{i}^{w}(\boldsymbol{x},\by,\boldsymbol{w})
&

\end{array}
\end{equation}}
\end{minipage}} 
\caption{Our SOS program in the constrained setting.}\label{fig:sos program constrained}
\vspace{-.15in}
\end{figure}

\vspace{-.1in}
\begin{theorem}\label{thm:monotone hamiltonian}
Let $\Z \subseteq \R^n$ be a closed convex set and $F:\Z \rightarrow \R^n$ be a monotone and $L$-Lipschitz operator. For any step size $\eta \in (0, \frac{1}{L}$) and any $z_k \in \Z$, the EG method update satisfies $r^{tan}_{(F,\Z)}(z_k) \ge r^{tan}_{(F,\Z)}(z_{k+1})$.
\end{theorem}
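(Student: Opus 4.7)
The plan is to proceed by contradiction: suppose that $r^{tan}_{(F,\Z)}(z_k) < r^{tan}_{(F,\Z)}(z_{k+1})$ for some iterate $k$. First I would invoke Lemma~\ref{lem:reduce to cone} to extract vectors $\ba_k,\ba_{k+\half},\ba_{k+1},\bz_k,\bz_{k+\half},\bz_{k+1},\bF(\bz_k),\bF(\bz_{k+\half}),\bF(\bz_{k+1})\in\R^N$ with $N\le n+5$ such that the three hyperplane normals lie in a three-dimensional subspace parameterized by $\alpha,\beta_1,\beta_2\in\R$ (Property~1), the squared-residual defect of Property~2 is strictly negative, and the monotonicity, Lipschitzness, and colinearity relations of Properties~3--4 all hold. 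This reframes the task as refuting a finite system of polynomial inequalities in the entries of these vectors together with the scalars $\alpha,\beta_1,\beta_2$.

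The next step is to produce a Positivstellensatz-style identity that refutes the reduced system. Concretely, I would search for an identity of the form
\begin{equation*}
-\,\Term_{\text{Property 2}} \;+\; \sum_i p_i(\bw)\cdot g_i(\bw) \;+\; \sum_j q_j(\bw)\cdot h_j(\bw) \;\in\; \sos[\bw],
\end{equation*}
where $\bw$ collects the relevant scalar variables, each $g_i\le 0$ encodes one of the Lipschitzness, monotonicity, and tangent-cone inequalities from Lemma~\ref{lem:reduce to cone}, each $h_j=0$ encodes one of the EG update/colinearity equations, each $p_i\in\sos[\bw]$, and each $q_j\in\R[\bw]$. Under the contrapositive hypothesis, every summand except the first is nonpositive while the first is strictly positive, contradicting that the whole expression is a sum of squares. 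To make the search finite and dimension-free, I would exploit the low-dimensionality of a single EG update: the three normals $\ba_k,\ba_{k+\half},\ba_{k+1}$ span a three-dimensional subspace, and for every coordinate $\ell\ge 4$ the update satisfies $\bz_{k+\half}[\ell]-\bz_k[\ell]+\eta\bF(\bz_k)[\ell]=0$ and $\bz_{k+1}[\ell]-\bz_k[\ell]+\eta\bF(\bz_{k+\half})[\ell]=0$, exactly as in the unconstrained setting of Section~\ref{sec:warm up}. By the same symmetry-across-coordinates argument used there, it suffices to keep the first three coordinates explicitly and bundle the remaining coordinates into a single representative one, collapsing the search to a constant-size SOS program over $27$ variables as in Figure~\ref{fig:sos constrainedprogram}.

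The main obstacle, and the reason such a certificate is inaccessible to the PEP framework or to hand analysis, is that the residual $\Ham(z)^2$ is a rational function whose numerator is degree $4$ in the iterates, so the required identity lives at degree $8$ and the SOS multipliers $p_i$ must depend nontrivially on $\alpha,\beta_1,\beta_2$. My strategy to discover them would be to first identify the minimal set of constraints that can possibly certify infeasibility by sampling $\alpha,\beta_1,\beta_2$ and solving the corresponding degree-$8$ SDP, then to recover the functional dependence of each multiplier on $\alpha,\beta_1,\beta_2$ by varying these parameters one pair at a time and inspecting the resulting polynomial fits. Once a closed-form candidate is in hand, verification reduces to a deterministic polynomial identity check in $\R[\bw]$ that does not require any further SDP solve. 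Plugging the certificate back through Lemma~\ref{lem:reduce to cone} yields the contradiction and establishes $r^{tan}_{(F,\Z)}(z_k)\ge r^{tan}_{(F,\Z)}(z_{k+1})$ for every $k$.
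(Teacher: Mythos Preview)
Your plan is essentially the paper's proof: invoke Lemma~\ref{lem:reduce to cone}, split off the unconstrained coordinates $\ell\ge 4$ (handled exactly as in Section~\ref{sec:warm up}), and certify the remaining three-coordinate inequality via the degree-$8$ SOS program of Figure~\ref{fig:sos constrainedprogram}, with the multipliers discovered by the parameter-sampling heuristic you describe. One slip to fix: the certificate should carry $+\Term_{\text{Property 2}}$ rather than $-\Term_{\text{Property 2}}$, so that under the contrapositive hypothesis every summand on the left is nonpositive (the Property~2 expression strictly so), yielding the contradiction with membership in $\sos[\bw]$; with your sign the first summand is strictly positive and no contradiction follows.
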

\begin{proof}
Let $c_k = \Pi_{\normal_\Z(z_k)}(-F(z_k))$ and  $c_{k+1} = \Pi_{\normal_\Z(z_{k+1})}(-F(z_{k+1}))$.
By Lemma~\ref{lemma:property of tangent residual} we have
\begin{align}
    \eta^2 r^{tan}(z_k)^2 - \eta^2 r^{tan}(z_{k+1})^2 =  \|\eta F(z_k)+\eta c_k\|^2- \|\eta F(z_{k+1})+\eta c_{k+1}\|^2 \label{eq:low deg LHS 1}
\end{align}
Combining the monotonicity and $L$-Lipschitzness of $F$ with the fact that $L \leq \frac{1}{\eta}$, we have
\begin{align}
&(-1)\cdot \left(\InNorms{z_{k+\half}-z_{k+1}}^2 - \InNorms{\eta F(z_{k+\half})-\eta F(z_{k+1})}^2\right)  \le 0 \label{eq:low deg LHS 2}, \\
   & (-2)\cdot \InAngles{ \eta F(z_{k+1}) - \eta F(z_k) , z_{k+1} - z_k)} \le 0, \label{eq:low deg LHS 3}.
\end{align}

Since $z_{k+\half}=\Pi_{\Z}\left(z_k-\eta F(z_k)\right)$ and $z_{k+1}=\Pi_{\Z}\left(z_k-\eta F(z_{k+\half})\right)$,
we can infer that $z_k-\eta F(z_k) - z_{k+\half}\in \normal(z_{k+\half})$ and $z_k-\eta F(z_{k+\half}) -
z_{k+1}\in \normal(z_{k+1})$,
which further implies
\begin{align}
   & (-2)\cdot \InAngles{ z_k - \eta F(z_k) - z_{k+\half},z_{k+\half} - z_{k+1} } \le 0, \label{eq:low deg LHS 4}\\
   & (-2)\cdot \InAngles{ z_k - \eta F(z_{k+\half}) - z_{k+1},z_{k+1} - z_k } \le 0, \label{eq:low deg LHS 5}\\
   & (-2)\cdot \InAngles{ \eta c_k,z_{k} - z_{k+\half} } \le 0, \label{eq:low deg LHS 6}.
\end{align}
Since $z_k - \eta F(z_{k+\half}) - z_{k+1}\in \normal(z_{k+1})$ and 
$c_{k+1}=\Pi_{\normal_\Z(z_{k+1})}(-F(z_{k+1}))$,
by Lemma~\ref{lemma:property of tangent residual} we have
\begin{align}
   & (-2)\cdot \InAngles{ \eta c_{k+1} + \eta F(z_{k+1}), z_k - \eta F(z_{k+\half}) - z_{k+1} } \leq 0 \label{eq:low deg LHS 7},\\
   & (-2)\cdot \InAngles{ \eta c_{k+1} + \eta F(z_{k+1}), -\eta c_{k+1} } = 0 \label{eq:low deg LHS 8}.
\end{align}

\textsc{Matlab} code for the verification of the following identity can be found at 
this \href{https://raw.githubusercontent.com/aroikonomou/code-verification-OGDA/main/eg_constrained_identity_verification.m}{link}.

\begin{align}
     &\text{Expression}~\eqref{eq:low deg LHS 1} + \LHSI~\eqref{eq:low deg LHS 2} + \LHSI~\eqref{eq:low deg LHS 3} \notag \\
     + & \LHSI~\eqref{eq:low deg LHS 4} 
     + \LHSI~\eqref{eq:low deg LHS 5} + \LHSI~\eqref{eq:low deg LHS 6} \notag \\
     + &\LHSI~\eqref{eq:low deg LHS 7} + \LHSI~\eqref{eq:low deg LHS 8} \notag\\
     =& \|\eta F(z_k) + \eta c_k - z_k + z_{k+\half}\|^2 \label{eq:low deg RHS 1} \\
     +& \|\eta F(z_{k+\half}) +\eta c_{k+1} - z_k + z_{k+1}\|^2 \label{eq:low deg RHS 2}\geq 0,
\end{align}
which concludes the proof.
\end{proof}

\vspace{-0.15in}
\begin{theorem}\label{thm:EG last-iterate constrained}
Let $\Z \subseteq \R^n$ be a closed convex set, $F(\cdot):\Z\rightarrow \R^n$ be a monotone and $L$-Lipschitz operator and $z^*\in \Z$ be the solution to the variational inequality. Then for any $T \ge 1$, $z_T$ produced by EG with any constant step size $\eta \in (0,\frac{1}{L})$ satisfies
\begin{itemize}
    \item $\gap(z_T)  \le   \frac{1}{\sqrt{T}}\frac{3D||z_0-z^*||}{\eta\sqrt{1-(\eta L)^2}}$,
    \item$ r^{nat}(z_T) \le \Ham(z_T) \le \frac{1}{\sqrt{T}}\frac{3||z_0-z^*||}{\eta\sqrt{1-(\eta L)^2}}$.
\end{itemize}

\end{theorem}
Theorem~\ref{thm:EG last-iterate constrained} is implied by combing~\Cref{lem:projected hamiltonian dominates natural residue}, \Cref{lem:hamiltonian to gap}, \Cref{lem:best iterate hamiltonian}, \Cref{thm:monotone hamiltonian} and the fact that $\eta\in \InParentheses{0,\frac{1}{L}}$. Choosing $\eta$ to be $\frac{1}{2L}$ and $D=O(\InNorms{z_0-z^*})$, then $\gap(z_T)= O(\frac{D^2L}{\sqrt{T}})$ matching the $\Omega(\frac{D^2L}{\sqrt{T}})$ lower bound for EG, OGDA, and more generally all p-SCLI algorithms~\citep{golowich_last_2020,golowich_tight_2020} in terms of the dependence on $D$, $L$, and $T$. Additionally, $r_{\Z,F,D}^{nat}(z_{T})= O\InParentheses{\frac{DL}{\sqrt{T}}}$, and $r_{\Z,F,D}^{tan}(z_{T})= O\InParentheses{\frac{DL}{\sqrt{T}}}$, so our upper bounds for both the natural residual and tangent residual also match the $\Omega\InParentheses{\frac{DL}{\sqrt{T}}}$ lower bounds with respect to natural residual and tangent residual for EG~\citep{golowich_last_2020}. This is because both the natural residual and tangent residual are equivalent to the norm of the operator, and \citep{golowich_last_2020} shows that in the unconstrained setting $\InNorms{F(z_T)}=\Omega\InParentheses{\frac{DL}{\sqrt{T}}}$.

\section{Last-Iterate Convergence of OGDA with Constant Step Size}\label{sec:OGDA}
In this section, we show that the OGDA algorithm with constant step size $\eta \in (0, \frac{1}{2L})$ has $O(\frac{1}{\sqrt{T}})$ last-iterate convergence rate with respect to the tangent residual or the gap function.
The analysis of the OGDA algorithm follows the same steps as in the analysis of the EG algorithm.
Compared to the EG algorithm, 
the last iterate convergence of OGDA follows by builds on the monotonicity and best-iterate convergence of the following potential function
\begin{align}
\Phi_k = \InNorms{F(z_k)-F(w_k)}^2 + r^{tan}(z_k)^2.
\end{align}
The potential function can be thought of as the tangent residual ($r^{tan}(z_k)^2$) and an extra correction term $\InNorms{F(z_k) - F(w_k)}^2$.
The potential function is discovered directly through SOS programming.
The SOS program was formulated by searching over linear combinations of $||F(z_k)||^2 - ||F(z_{k+1})||^2$,$||F(w_k)||^2 - ||F(w_{k+1})||^2$,$\InAngles{F(z_k),F(w_k)} - \InAngles{F(z_{k+1}),F(w_{k+1})}$ and $r^{tan}(z_k)-r^{tan}(z_{k+1})$,
under (i) the constraint that the linear combination is non-increasing,\footnote{To avoid finding the  trivial linear combination, i.e.,  all coefficients equal to $0$, we also use the objective function in the SOS program  to encourage a non-trivial solution if one exists by, for example, maximizing the sum of the coefficients of the linear combination.} and (ii) the constraints induced by properties of the operator $F(\cdot)$, the update rule of OGDA and the set $\Z$ (See \Cref{fig:sos program constrained} for a demonstration of the induced constraints of EG algorithm for solving a monotone VI over convex constraints). We then use the  linear combination output by the SOS program as the potential function in our analysis. We believe our heuristic for finding a potential function could be useful in other settings. In general, one can first choose a collection of basis functions that may be part of a potential function, then use SOS programming to search over all linear combinations of the basis functions subject to the constraint that the linear combination is non-negative to discover the potential function.
We postpone all technical details to \Cref{appx:OGDA}.

\subsection{Warm Up: Unconstrained Case}
In this section, we show that in the unconstrained setting the potential function $\Phi_k$ is non-increasing, which implies the $O(\frac{1}{\sqrt{T}})$ last-iterate convergence rate for OGDA with respect to the tangent residual, the natural residual, and the gap function.
\begin{theorem}\label{thm:monotone potential at unconstrained}
Let $F: \R^n \rightarrow \R^n$ be a monotone and $L$-Lipschitz operator. Then for any $k \in \mathbb{N}$, the OGDA algorithm with step size $\eta \in (0,\frac{1}{2L})$ satisfies $\InNorms{F(z_k) - F(w_k)}^2+ \InNorms{F(z_k)}^2 \ge \InNorms{F(z_{k+1}) - F(w_{k+1})}^2+ \InNorms{F(z_{k+1})}^2$. 
\end{theorem}
\begin{proof}
Since $F$ is monotone and $L$-Lipschitz, we have $
   \InAngles{F(z_{k+1})-F(z_k)  , z_{k} - z_{k+1}}  \le 0$ and $\InNorms{F(w_{k+1})-F(z_{k+1})}^2 - L^2 \InNorms{w_{k+1} - z_{k+1}}^2 \le 0$. 
We simplify them using the update rule of OGDA and $\eta^2 L^2<\frac{1}{4}$. In particular, we replace $z_k-z_{k+1}$ by $\eta F(w_{k+1})$ and $w_{k+1} - z_{k+1}$ with $\eta F(w_{k+1}) - \eta F(w_k)$.
\begin{align}
   & \InAngles{ F(z_{k+1}) - F(z_k) ,F(w_{k+1})} \le 0, \label{eq:OGDA unconstrained-mon}\\
   &\InNorms{F(w_{k+1})-F(z_{k+1})}^2 -\frac{1}{4}\InNorms{F(w_{k+1}) - F(w_k)}^2  \le 0 \label{eq:OGDA unconstrained-lip}.
\end{align}


\textsc{Matlab} code for the verification of the following identity can be found at 
this \href{https://raw.githubusercontent.com/aroikonomou/code-verification-OGDA/main/ogda_target2_identity_verification.m}{link}.

\begin{align}
    &\InNorms{F(z_k) - F(w_k)}^2+ \InNorms{F(z_k)}^2 - \InNorms{F(z_{k+1}) - F(w_{k+1})}^2- \InNorms{F(z_{k+1})}^2 \notag\\
    &+ 2\cdot \LHSI (\ref{eq:OGDA unconstrained-mon}) + 2 \cdot \LHSI (\ref{eq:OGDA unconstrained-lip}) \notag\\
    &=\frac{1}{2}\InNorms{F(w_k) + F(w_{k+1}) - 2F(z_k))}^2.\label{eq:identity OGDA unconstrained}
\end{align}
Thus, $\InNorms{F(z_k) - F(w_k)}^2+ \InNorms{F(z_k)}^2 \ge \InNorms{F(z_{k+1}) - F(w_{k+1})}^2+ \InNorms{F(z_{k+1})}^2$. 
\end{proof}

The following theorem is a 
combination of \Cref{cor:OGDA potential Best-iterate}, \Cref{thm:monotone potential at unconstrained}, \Cref{lem:OGDA relate w and z}, \Cref{lem:projected hamiltonian dominates natural residue} and \Cref{lem:hamiltonian to gap}.

\notshow{
\begin{theorem}\label{thm: last-iterate at unconstrained}
Let $F: \R^n \rightarrow \R$ be a monotone and $L$-Lipschitz operator. Let $z_0 = w_0 \in \R^n$ be arbitrary starting point and $\{z_k,w_k\}_{k\ge 0}$ be the iterates of the OGDA algorithm with any step size $\eta \in  (0,\frac{1}{2L})$. Denote $D_0 := \sqrt{\frac{4+ 6\eta^4 L^4}{\eta^2 L^2 (1-4\eta^2 L^2)}  \InNorms{z_0 - z^*}^2 +   \frac{16\eta^2 L^2 + 6\eta^4 L^4}{\eta^2 L^2(1- 4\eta^2L^2)} \InNorms{w_0 - z_0}^2} = O(\max\{\InNorms{z_0 - z^*}, \InNorms{w_0 - z_0}\})$. Then for any $T \ge 1$, 
\begin{itemize}
    \item $\gap_{\Z,F,D}(z_{T}) \le   \frac{DD_0 L}{\sqrt{ T}}$.
    \item $r_{\Z,F,D}^{nat}(z_{T}) \le r_{\Z,F,D}^{tan}(z_T) \le  \frac{D_0 L}{\sqrt{T}} $.
    \item $\gap_{\Z,F,D}(w_{T+1}) \le \sqrt{2}(2+\eta L)  \frac{DD_0 L}{\sqrt{ T}}$.
    \item $r_{\Z,F,D}^{nat}(w_{T+1}) \le r_{\Z,F,D}^{tan}(w_{T+1}) \le \sqrt{2}(2+\eta L) \frac{D_0 L}{\sqrt{T}} $.
\end{itemize}
\end{theorem}
}

\begin{theorem}\label{thm: last-iterate at unconstrained}
Let $F: \R^n \rightarrow \R$ be a monotone and $L$-Lipschitz operator. Let $z_0 = w_0 \in \R^n$ be arbitrary starting point and $\{z_k,w_k\}_{k\ge 0}$ be the iterates of the OGDA algorithm with any step size $\eta \in  (0,\frac{1}{2L})$. Denote $D_0 := \sqrt{\InParentheses{4+ 6\eta^4 L^4}  \InNorms{z_0 - z^*}^2 +   \InParentheses{16\eta^2 L^2 + 6\eta^4 L^4} \InNorms{w_0 - z_0}^2} = O(\max\{\InNorms{z_0 - z^*}, \InNorms{w_0 - z_0}\})$. Then for any $T \ge 1$, 
\begin{itemize}
    \item $\gap_{\Z,F,D}(z_{T}) \le  \frac{1}{\sqrt{T}}\cdot  \frac{DD_0 }{\eta \sqrt{1-4 (\eta L)^2)}}$.
    \item $r_{\Z,F,D}^{nat}(z_{T}) \le r_{\Z,F,D}^{tan}(z_T) \le \frac{1}{\sqrt{T}}\cdot \frac{D_0}{\eta\sqrt{1-4 (\eta L)^2}} $.
    \item $\gap_{\Z,F,D}(w_{T+1}) \le  \frac{1}{\sqrt{T}}\cdot \frac{\sqrt{2}(2+\eta L)\cdot DD_0 }{\eta \sqrt{ 1-4  (\eta L)^2}}$.
    \item $r_{\Z,F,D}^{nat}(w_{T+1}) \le r_{\Z,F,D}^{tan}(w_{T+1}) \le  \frac{1}{\sqrt{T}}\cdot \frac{\sqrt{2}(2+\eta L)\cdot D_0 }{\eta \sqrt{1-4 (\eta L)^2}} $.
\end{itemize}
\end{theorem}


\subsection{Last-Iterate Convergence of OGDA with Arbitrary Convex Constraints}

In this section,
we formally state the last-iterate convergence of OGDA algorithm with respect to the gap function, the natural residual and the tangent residual in the constrained setting.
All the details are postponed to Appendix~\ref{appx:OGDA}.
\begin{theorem}\label{thm: OGDA last-iterate at constrained main}
Let $\Z \subseteq \R^n$ be a closed convex set and $F: \Z \rightarrow \R$ be a monotone and $L$-Lipschitz operator. Let $z_0,w_0 \in \Z$ be arbitrary starting point and $\{z_k,w_k\}_{k\ge 0}$ be the iterates of the OGDA algorithm with any step size $\eta \in  (0,\frac{1}{2L})$. 
Let $D_0 := \sqrt{\InParentheses{4+ 6\eta^4 L^4}  \InNorms{z_0 - z^*}^2 +  \InParentheses{ 16\eta^2 L^2 + 6\eta^4 L^4} \InNorms{w_0 - z_0}^2} = O(\max\{\InNorms{z_0 - z^*}, \InNorms{w_0 - z_0}\})$.
Then for any $T \ge 1$, 
\begin{itemize}
    \item $\gap_{\Z,F,D}(z_{T}) \le \frac{DD_0 }{ \eta \cdot \sqrt{ T\cdot \left(1-4\cdot (\eta L)^2\right)}}$.
    \item $r_{\Z,F,D}^{nat}(z_{T}) \le r_{\Z,F,D}^{tan}(z_T) \le \frac{D_0 }{ \eta \cdot \sqrt{ T\cdot \left(1-4\cdot (\eta L)^2\right)}} $.
    \item $\gap_{\Z,F,D}(w_{T+1}) \le \frac{\sqrt{2}(2+\eta L)\cdot D\cdot D_0 }{ \eta \cdot \sqrt{ T\cdot \left(1-4\cdot (\eta L)^2\right)}}$.
    \item $r_{\Z,F,D}^{nat}(w_{T+1}) \le r_{\Z,F,D}^{tan}(w_{T+1}) \le  \frac{\sqrt{2}(2+\eta L) D_0 }{ \eta \cdot \sqrt{ T\cdot \left(1-4\cdot (\eta L)^2\right)}} $.
\end{itemize}
\notshow{
\begin{align*}
    &\gap_{\Z,F,D}(z_T) \le   \frac{DL}{\sqrt{ T}} \sqrt{\frac{4+ 6\eta^4 L^4}{\eta^2 L^2 (1-4\eta^2 L^2)}  \InNorms{z_0 - z^*}^2 +  \weiqiangnote{ \frac{6\eta^2 L^2 + 16\eta^4 L^4}{\eta^2 L^2(1- 4\eta^2L^2)} \InNorms{w_0 - z_0}^2}},\\
    &r_{\Z,F,D}^{nat}(z_T) \le r_{\Z,F,D}^{tan}(z_T) \le \frac{L}{\sqrt{ T}} \sqrt{\frac{4+ 6\eta^4 L^4}{\eta^2 L^2 (1-4\eta^2 L^2)}  \InNorms{z_0 - z^*}^2 +  \weiqiangnote{ \frac{6\eta^2 L^2 + 16\eta^4 L^4}{\eta^2 L^2(1- 4\eta^2L^2)} \InNorms{w_0 - z_0}^2}},\\
    &\gap_{\Z,F,D}(w_{T+1}) \le 2(2+\eta L) \frac{DL}{\sqrt{T}} \sqrt{\frac{4+ 6\eta^4 L^4}{\eta^2 L^2 (1-4\eta^2 L^2)}  \InNorms{z_0 - z^*}^2 + \weiqiangnote{ \frac{6\eta^2 L^2 + 16\eta^4 L^4}{\eta^2 L^2(1- 4\eta^2L^2)} \InNorms{w_0 - z_0}^2}},\\
    &r_{\Z,F,D}^{nat}(w_{T+1}) \le r_{\Z,F,D}^{tan}(w_{T+1}) \le 2(2+\eta L) \frac{L}{\sqrt{T}} \sqrt{\frac{4+ 6\eta^4 L^4}{\eta^2 L^2 (1-4\eta^2 L^2)}  \InNorms{z_0 - z^*}^2 + \weiqiangnote{ \frac{6\eta^2 L^2 + 16\eta^4 L^4}{\eta^2 L^2(1- 4\eta^2L^2)} \InNorms{w_0 - z_0}^2}}
\end{align*}
}
\end{theorem}

Setting $D = \max\{ \InNorms{z_0 - z^*}, \InNorms{w_0 - z_0}\}$, and $\eta = \frac{1}{2\sqrt{2}L}$, we have $\gap_{\Z,F,D}(z_T)$ (or $\gap_{\Z,F,D}(w_T))$ $= O \InParentheses{ \frac{D^2L}{\sqrt{T}}}$, which matches the lower bound of $\Omega(\frac{D^2L}{\sqrt{T}})$~by \cite{golowich_tight_2020}.

\bibliographystyle{plainnat}
\bibliography{references_new,ref}

\appendix
\section{Additional Preliminaries}\label{sec:additional prelim}
For $z\in \R^n$ and $D>0$, we use $\mathcal{B}(z,D)=\{z'\in R^n: \InNorms{z'-z}\leq R\}$ to denote the ball of radius $D$, centered at $z$.

\paragraph{Min-Max Saddle Points.} A special case of the variational inequality problem is the  constrained min-max problem $\min_{x\in \X} \max_{y\in \Y} f(x,y)$, where $\X$ and $\Y$ are closed convex sets in $\R^n$, and $f(\cdot,\cdot)$ is smooth, convex in $x$, and concave in $y$. It is well known that if one set $F(x,y) = \begin{pmatrix}
  \nabla_x f(x,y)\\
  -\nabla_y f(x,y)
\end{pmatrix}$,
then $F(x,y)$ is a monotone and Lipschitz operator~\citep{facchinei_finite-dimensional_2007}.

\paragraph{Equilibria of Monotone Games. } Monotone games are a large class of multi-player games that include many common and well-studied class of games such as bilinear games, $\lambda$-cocoercive games~\citep{lin_finite-time_2021}, zero-sum polymatrix games~\citep{cai_minmax_2011, cai_zero-sum_2016}, and zero-sum socially-concave games~\citep{even-dar_convergence_2009}. Besides, the min-max saddle point problem is a special case of two-player monotone games. We include the definition of monotone games here and remind readers that  finding a Nash Equilibrium of a monotone game is exactly the same as finding a solution to a monotone variational inequality. 

A continuous game $\mathcal{G}$ is  denoted as $(\mathcal{N}, (\X_i)_{i \in [N]}, (f_i)_{i \in [N]})$ where there are $N$ players $\mathcal{N} = \{1,\cdots,N\}$. Player $i\in\mathcal{N}$ chooses action from a closed convex set $\X_i \in \R^{n_i}$ such that $\X :=\Pi_{i\in\mathcal{N}}\X_i \in \R^n$ and wants to minimize its cost function $f_i: \X \rightarrow \R$. For each player $i$, we denote $x_{-i}$ the vector of actions of all the other players. A \emph{Nash Equilibrium} of game $\mathcal{G}$ is an action profile $x^* \in \X$ such that $f_i(x^*) \le f_i(x_i',x^*_{-i})$ for any $x_i' \in \X_i$. Let $F(x) = (\nabla_{x_i} f_i(x), \cdots, \nabla_{x_N} f_N(x)) \in \R^n$. We say $\mathcal{G}$ is \emph{monotone} if $\InAngles{F(x) - F(x'), x - x'} \ge 0$ for any $x, x' \in \X$. 

\notshow{
\begin{figure}[h!]
\colorbox{MyGray}{
\begin{minipage}{\textwidth} {
\small
\noindent\textbf{Input Fixed Polynomials.} 
\begin{itemize}
\item $S_= =\{g_i(\bx)\}_{i\in[M]}$.
\item $S_\leq =\{h_i(\bx)\}_{i\in[M']}$.

\item  $q_{i,j}(\boldsymbol{x},\by):=(y_i-y_j)(x_j-x_i)$ for any $0\leq j<i\leq 2$.
\end{itemize}
\noindent\textbf{Objective of the SOS Program:}
\noindent\textbf{Find polynomials:}
\begin{itemize}
\item $\lambda_{i,j} \ge 0$, and $\mu_{i,j} \ge 0$, for all $0\leq j<i\leq 2$.
\item $\varphi_1(\boldsymbol{x},\by)$ and $\varphi_2(\boldsymbol{x},\by)$ are two degree $1$ polynomials in $\mathbb{R}[\boldsymbol{x},\by]$.
\end{itemize}
\textbf{Constraints of the SOS Program:}
\begin{equation*}
\begin{array}{ll@{}ll}
\text{s.t.}  & \displaystyle y_2^2-y_0^2+\sum_{2\geq i>j\geq 0} \lambda_{i,j}\cdot p_{i,j}(\boldsymbol{x},\by)+\sum_{2\geq i>j\geq 0}\mu_{i,j}\cdot  q_{i,j}(\boldsymbol{x},\by)\\&\qquad\qquad\qquad\qquad\qquad\qquad+\varphi_1(\boldsymbol{x},\by)u_1(\boldsymbol{x},\by)+\varphi_2(\boldsymbol{x},\by)u_2(\boldsymbol{x},\by)\in \sos[\boldsymbol{x},\by].
\end{array}
\end{equation*}}
\end{minipage}} 
\caption{Our SOS program in the unconstrained setting.}\label{fig:template sos 2}
\end{figure}
}

In Lemma~\ref{lem:equivalent hamiltonian} we present several equivalent formulations of the \projhamnospace. 

\begin{lemma}\label{lem:equivalent hamiltonian}
Let $\Z$ be a closed convex set and $F:\Z\rightarrow \R^n$ be an operator. Denote $\normal_\Z(z)$ the normal cone of $z$ and $J_\Z(z) := \{z\} + T_\Z(z)$, where $T_\Z(z) = \{ z'\in \R^n: \InAngles{z',a} \le 0, \forall a \in \normal_{\Z}(z)\}$ is the tangent cone of $z$. Then all of the following quantities are equivalent: 
\begin{enumerate}
    \item $\sqrt{ \|F(z)\|^2 - \max_{\substack{a\in \unitnormal_\Z(z),\\ \InAngles{F(z),a}\le 0}}\langle F(z), a \rangle ^2}$
    \item $\min_{\substack{a\in \unitnormal_\Z(z),\\ \InAngles{F(z),a}\le 0}} \InNorms{F(z)-\InAngles{F(z),a}\cdot a}$
    \item $\displaystyle\InNorms{\Pi_{T_\Z(z)}\inteval{-F(z)}}$
    \item $\displaystyle\InNorms{\Pi_{J_\Z(z)}\inteval{z - F(z)} - z}$
    \item $\displaystyle\InNorms{-F(z) - \Pi_{\normal_\Z(z)}\inteval{-F(z)}}$
    \item $\displaystyle\min_{a \in \normal_{\Z}(z)} \InNorms{ F(z) + a}$
\end{enumerate}
\end{lemma}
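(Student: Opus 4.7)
My plan is to derive all six equalities from a single application of Moreau's decomposition theorem to the closed convex cone $K:=T_\Z(z)$ and its polar $K^\circ=\normal_\Z(z)$ applied to the vector $-F(z)$. Set $t:=\Pi_{T_\Z(z)}[-F(z)]$ and $w:=\Pi_{\normal_\Z(z)}[-F(z)]$; Moreau yields the orthogonal decomposition $-F(z)=t+w$ with $\InAngles{t,w}=0$, and hence $\InNorms{F(z)}^2=\InNorms{t}^2+\InNorms{w}^2$. I will then show that every one of the six quantities equals $\InNorms{t}$.

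\textbf{Easy equivalences.} First, $(3)=(4)$ because $J_\Z(z)=z+T_\Z(z)$ is a translate of $T_\Z(z)$, and projection is equivariant under translation, so $\Pi_{J_\Z(z)}[z-F(z)]-z=\Pi_{T_\Z(z)}[-F(z)]=t$ and both sides have norm $\InNorms{t}$. Next, $(5)=(6)$ is the variational definition of projection: $w$ minimizes $\InNorms{a-(-F(z))}=\InNorms{F(z)+a}$ over $a\in\normal_\Z(z)$, and the minimum value equals $\InNorms{-F(z)-w}$. Finally, $(3)=(5)$ follows directly from Moreau's identity $t=-F(z)-w$.

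\textbf{Relating (1) to (3).} The variational characterization of $w=\Pi_{\normal_\Z(z)}[-F(z)]$ is $\InAngles{-F(z)-w,u-w}\le 0$ for every $u\in\normal_\Z(z)$. Since $\normal_\Z(z)$ is a convex cone containing $0$, plugging in $u=2w$ and $u=0$ (both admissible) forces $\InAngles{-F(z)-w,w}=0$, and substituting back gives $\InAngles{-F(z)-w,u}\le 0$ for every $u\in\normal_\Z(z)$. For any $a\in\unitnormal_\Z(z)$ (so $a\in\normal_\Z(z)$ with $\InNorms{a}\le 1$), this yields
\[
\InAngles{-F(z),a}=\InAngles{w,a}+\InAngles{-F(z)-w,a}\le\InAngles{w,a}\le\InNorms{w}
\]
by Cauchy--Schwarz. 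Conversely, when $w\ne 0$, the unit vector $a^\star:=w/\InNorms{w}$ lies in $\unitnormal_\Z(z)$, satisfies $\InAngles{F(z),a^\star}=-\InNorms{w}\le 0$, and attains $\InAngles{F(z),a^\star}^2=\InNorms{w}^2$; when $w=0$, the constraint $\InAngles{F(z),a}\le 0$ combined with the bound above forces the max to be $0=\InNorms{w}^2$. Hence the maximum in $(1)$ equals $\InNorms{w}^2$, and by Moreau's orthogonality $(1)=\sqrt{\InNorms{F(z)}^2-\InNorms{w}^2}=\InNorms{t}=(3)$.

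\textbf{From (1) to (2), and the main obstacle.} For a unit vector $\hat a$ and $\lambda\in[0,1]$, setting $a=\lambda\hat a$ and expanding yields $\InNorms{F(z)-\InAngles{F(z),a}a}^2=\InNorms{F(z)}^2-\lambda^2(2-\lambda^2)\InAngles{F(z),\hat a}^2$. Since $\lambda\mapsto\lambda^2(2-\lambda^2)$ is nondecreasing on $[0,1]$, the minimum in $(2)$ over $\unitnormal_\Z(z)$ is attained at unit vectors, where the objective reduces to $\InNorms{F(z)}^2-\InAngles{F(z),a}^2$; taking the square root matches $(1)$. The main obstacle is the step relating (1) to (3), which rests on the KKT characterization of the projection onto the convex cone $\normal_\Z(z)$ together with the explicit identification of the maximizing direction $a^\star=w/\InNorms{w}$; the remaining equivalences are bookkeeping with Moreau's theorem and the translation equivariance of projection.
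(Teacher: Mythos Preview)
Your proof is correct and follows essentially the same approach as the paper: both arguments rest on Moreau's decomposition for the pair $(T_\Z(z),\normal_\Z(z))$, translation equivariance of projection for $(3)=(4)$, the variational characterization of projection for $(5)=(6)$, and the observation that $\lambda\mapsto\lambda^2(2-\lambda^2)$ is maximized at $\lambda=1$ for $(1)=(2)$. The only organizational difference is that the paper closes the cycle via $(6)=(2)$ (splitting into the cases $\InAngles{F(z),a}>0$ and $\InAngles{F(z),a}\le 0$ and optimizing over scalar multiples of $a$), whereas you link $(1)$ directly to $(3)$ by identifying the maximizer $a^\star=w/\InNorms{w}$ and using the cone-projection KKT identity $\InAngles{-F(z)-w,w}=0$; this is a slightly more streamlined packaging of the same computation.
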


\begin{proof}
\noindent{\bf (quantity 1 = quantity 2).}
Observe that
$$
\min_{\substack{a\in \unitnormal_\Z(z),\\ \InAngles{F(z),a}\le 0}}\InNorms{F(z)-\InAngles{F(z),a}\cdot a}^2
= \InNorms{F(z)}^2
- \max_{\substack{a\in \unitnormal_\Z(z),\\ \InAngles{F(z),a}\le 0}}\InAngles{F(z),a}^2\cdot\InParentheses{2-\InNorms{a}^2}.
$$


Therefore, it is enough to show that $\max_{\substack{a\in \unitnormal_\Z(z),\\ \InAngles{F(z),a}\le 0}}\InAngles{F(z),a}^2\cdot\InParentheses{2-\InNorms{a}^2} =\max_{\substack{a\in \unitnormal_\Z(z),\\ \InAngles{F(z),a}\le 0}}\langle F(z), a \rangle ^2$.
If $\unitnormal_\Z(z)=\{(0,\ldots,0)\}$,
then the equality holds trivially.
Now we assume that $\{(0,\ldots,0)\} \subsetneq \unitnormal_\Z(z)$ and consider any $a\in \unitnormal_\Z(z)\backslash (0,\ldots,0)$.
Let $c\in\left[1,\frac{1}{\InNorms{a}}\right]$.
By Definition~\ref{def:normal cone}, $\InNorms{a}\leq 1$,
which implies that $c\cdot a \in \unitnormal_\Z(z)$.
We try to maximize the following objective
$$
\InAngles{F(z),c\cdot a}^2\cdot\InParentheses{2-c^2\InNorms{a}^2}
=\frac{\InAngles{F(z), a}^2}{\InNorms{a}^2}\cdot c^2\InNorms{a}^2\cdot\InParentheses{2-c^2\InNorms{a}^2}.
$$

One can easily verify that function $c^2\InNorms{a}^2\cdot\InParentheses{2-c^2\InNorms{a}^2}$ is maximized when $c^2\InNorms{a}^2=1\Leftrightarrow c=\frac{1}{\InNorms{a}}$.
Thus when $\{(0,\ldots,0)\} \subsetneq \unitnormal_\Z(z)$,
\begin{align*}
\max_{\substack{a\in \unitnormal_\Z(z),\\ \InAngles{F(z),a}\le 0}}\InAngles{F(z),a}^2\cdot\InParentheses{2-\InNorms{a}^2}
= &
\max_{\substack{a\in \unitnormal_\Z(z),\\ \InAngles{F(z),a}\le 0,\\ \InNorms{a}=1}}\InAngles{F(z),a}^2\cdot\InParentheses{2-\InNorms{a}^2}\\
= &
\max_{\substack{a\in \unitnormal_\Z(z),\\ \InAngles{F(z),a}\le 0,\\ \InNorms{a}=1}}\InAngles{F(z),a}^2\\
= &
\max_{\substack{a\in \unitnormal_\Z(z),\\ \InAngles{F(z),a}\le 0}}\InAngles{F(z),a}^2,
\end{align*}
which concludes the proof.

\noindent{\bf (quantity 3 = quantity 4). }By definition, $J_\Z(z) = \{z\} + T_\Z(z) $. Thus we have 
\begin{align*}
    \InNorms{\Pi_{J_\Z(z)}\inteval{z - F(z)} - z} = \InNorms{\Pi_{T_\Z(z)}\inteval{- F(z)}}.
\end{align*}

\noindent{\bf (quantity 4 = quantity 5).} By definition, the tangent cone $T_\Z(z)$ is the polar cone of the normal cone $\normal_\Z(z)$. Since $\normal_\Z(z)$ is a closed convex cone, by Moreau's decomposition theorem, we have for any vector $x \in \R^n$, 
\begin{align*}
    x = \Pi_{\normal_\Z(z)}(x) + \Pi_{T_\Z(z)}(x), \qquad \InAngles{\Pi_{\normal_\Z(z)}(x), \Pi_{T_\Z(z)}(x)} = 0.
\end{align*}
Thus it is clear that we have 
\begin{align*}
    \InNorms{\Pi_{J_\Z(z)}\inteval{z - F(z)} - z} &= \InNorms{\Pi_{T_\Z(z)}\inteval{- F(z)}} \\
    & = \InNorms{-F(z) - \Pi_{\normal_\Z(z)}\inteval{-F(z)}}.
\end{align*}

\noindent{\bf (quantity 5 = quantity 6). }
Denote $a^* := \Pi_{\normal_\Z(z)}\inteval{-F(z)}$. By definition of projection, we have 
\begin{align*}
    a^* = \argmin_{a \in \normal_\Z(z)} \InNorms{F(z)+ a}^2.
\end{align*}
Thus 
\begin{align*}
    \InNorms{-F(z) - \Pi_{\normal_\Z(z)}\inteval{-F(z)}}^2 = \InNorms{F(z) + a^*}^2 = \min_{a \in \normal_\Z(z)} \InNorms{F(z)+ a}^2.
\end{align*}

\noindent{\bf (quantity 6 = quantity 2).}
For any fix non-zero $a\in \normal_\Z(z)$,  (i) if $\InAngles{F(z),a}>0$, then $ \InNorms{F(z)+ a}^2\geq \InNorms{F(z)}^2$, and (ii) if  $\InAngles{F(z),a}\leq 0$, $\InNorms{F(z)+ a}^2\geq \InNorms{F(z)-\InAngles{F(z),\frac{a}{\InNorms{a}}}\cdot \frac{a}{\InNorms{a}}}^2$, as
$$\min_{r\geq 0} \InNorms{F(z)+r\cdot a}^2=\InNorms{F(z)-\InAngles{F(z),\frac{a}{\InNorms{a}}}\cdot \frac{a}{\InNorms{a}}}^2.$$
Hence, $$\min_{a \in \normal_\Z(z)} \InNorms{F(z)+ a}^2=\min_{\substack{a'\in\unitnormal_\Z(z)\\ \InAngles{F(z),a'}\leq 0}}\InNorms{F(z)-\InAngles{F(z),a'} \cdot a'}^2
$$
The first equality is because for any $a\in \normal_\Z(z)$, there exists $a'\in \unitnormal_\Z(z)$ so that  $\InNorms{F(z)+a}^2\geq \InNorms{F(z)-\InAngles{F(z),a'}}^2$. 
\end{proof}

In the following Lemma, we show a useful property of the tangent residual that we use repeatedly.
\begin{lemma}\label{lem:upper bound tangent residual}
Let $\Z \subseteq \R^n$ be a closed convex set and $F: \Z \rightarrow \R$ be an operator. Let $\eta > 0$ and $z_1, z_2, z_3 \in \Z$ be three points such that $ z_1 = \Pi_\Z [z_2 - \eta F(z_3)]$, then we have 
\begin{align*}
    \Ham(z_1) \le \InNorms{\frac{z_2 - z_1}{\eta} + F(z_1) - F(z_3)}.
\end{align*}
\end{lemma}
\begin{proof}
If $z_1 = z_2 - \eta F(z_3)$, then the lemma holds since
\begin{align*}
    \Ham(z_1) \le \InNorms{F(z_1)} = \InNorms{F(z_3) + F(z_1) - F(z_3)} = \InNorms{\frac{z_2-z_1}{\eta} + F(z_1) - F(z_3)}.
\end{align*}
For the rest of the proof, we assume that $z_1 \ne z_2-\eta F(z_3)$. 
Since $z_2 - \eta F(z_3) - z_1 \in \normal(z_1)$, we have 
\begin{align*}
    \InAngles{z_2 -\eta F(z_3) - z_1, z - z_1} \le 0, \quad \forall z \in \Z.
\end{align*}
Define $a := \frac{z_2 -\eta F(z_3) - z_1}{\InNorms{z_2 -\eta F(z_3) - z_1}}$. We thus know $a \in \unitnormal(z_1)$. Let 
$$a_\perp :=
\begin{cases}
\frac{F(z_1)-  \langle a, F(z_1)\rangle \cdot a}{\InNorms{F(z_1)-  \langle a, F(z_1)\rangle \cdot a}} \qquad & \text{if $\InNorms{F(z_1)-  \langle a, F(z_1)\rangle \cdot a}\neq 0$,}\\
\InParentheses{0,\ldots,0} & \text{otherwise.}
\end{cases}
$$
Observe that $\langle a, a\rangle = 1$, $\langle a_\perp, a\rangle = 0$ and $F(z_1) =\langle a, F(z_1)\rangle a + \langle a_\perp, F(z_1)\rangle a_\perp $. Thus
\begin{align}
&0=\langle a_\perp, a\rangle =\langle a_\perp, z_2 -  \eta F(z_3) -z_1\rangle\nonumber \\
\Leftrightarrow &\langle a_\perp, F(z_3)\rangle = \frac{\langle a_\perp, z_2 -z_1\rangle}{\eta}.
\label{eq:penpedicular part-1}
\end{align}
Moreover, the fact that
$\InAngles{a, z_1 - z_2 + \eta F(z_3)} \le 0$ implies $ \InAngles{a, F(z_3)} \le \frac{\InAngles{a, z_2 - z_1}}{\eta}
$, which further implies that
 \begin{align}
&\InAngles{a, F(z_1)} = \InAngles{a, F(z_3)+ F(z_1) - F(z_3)} \leq  \InAngles{a, \frac{z_2-z_1}{\eta} + F(z_1) - F(z_3)}. 
\label{eq:normal part-1}
\end{align}
Combining the definition of $\Ham(z_1)$, Equation~\eqref{eq:penpedicular part-1}, and Equation~\eqref{eq:normal part-1} we have 
\begin{align*}
    \Ham(z_1)^2 &\le ||F(z_1)||^2 - \InAngles{a, F(z_1)}^2\cdot \mathbbm{1}[\InAngles{a, F(z_1)} \leq 0] \\
    & = \InAngles{a_\perp, F(z_1)}^2 + \InAngles{a, F(z_1)}^2\cdot \mathbbm{1}[\InAngles{a, F(z_1)} > 0] \\
    & = \InAngles{a_\perp, F(z_3) + F(z_1)-F(z_3)}^2 + \InAngles{a, F(z_1)}^2\cdot \mathbbm{1}[\InAngles{a, F(z_1)} > 0] \\
    & \le \InAngles{a_\perp, \frac{z_2 - z_1}{\eta} + F(z_1) - F(z_3) }^2 + \InAngles{a, \frac{z_2 - z_1}{\eta} + F(z_1) - F(z_3) }^2 \\
    & \leq \InNorms{\frac{z_2 - z_1}{\eta} + F(z_1) - F(z_3)}^2. \qedhere
\end{align*}
\end{proof}

\notshow{
\begin{lemma}\label{lem:equivalent hamiltonian}
Given an operator $F:\Z\rightarrow \R^n$ and a closed convex set $\Z$,
then
$$H_{(F,\Z)}(z)
=\min_{\substack{a\in \unitnormal_\Z(z),\\ \InAngles{F(z),a}\le 0}} \InNorms{F(z)-\InAngles{F(z),a}\cdot a}^2. 
$$
\end{lemma}
\begin{proof}
Observe that
$$
\min_{\substack{a\in \unitnormal_\Z(z),\\ \InAngles{F(z),a}\le 0}}\InNorms{F(z)-\InAngles{F(z),a}\cdot a}^2
= \InNorms{F(z)}^2
- \max_{\substack{a\in \unitnormal_\Z(z),\\ \InAngles{F(z),a}\le 0}}\InAngles{F(z),a}^2\cdot\InParentheses{2-\InNorms{a}^2}.
$$
We remind readers that
$H_{(F,\Z)}(z):=
\|F(z)\|^2 - \max_{\substack{a\in \unitnormal_\Z(z),\\ \InAngles{F(z),a}\le 0}}\langle F(z), a \rangle ^2
$.
Therefore, it is enough to show that $\max_{\substack{a\in \unitnormal_\Z(z),\\ \InAngles{F(z),a}\le 0}}\InAngles{F(z),a}^2\cdot\InParentheses{2-\InNorms{a}^2} =\max_{\substack{a\in \unitnormal_\Z(z),\\ \InAngles{F(z),a}\le 0}}\langle F(z), a \rangle ^2$.
If $\unitnormal_\Z(z)=\{(0,\ldots,0)\}$,
then the equality holds trivially.
Now we assume that $\{(0,\ldots,0)\} \subsetneq \unitnormal_\Z(z)$ and consider any $a\in \unitnormal_\Z(z)\backslash (0,\ldots,0)$.
Let $c\in\left[1,\frac{1}{\InNorms{a}}\right]$.
By Definition~\ref{def:normal cone}, $\InNorms{a}\leq 1$,
which implies that $c\cdot a \in \unitnormal_\Z(z)$.
We try to maximize the following objective
$$
\InAngles{F(z),c\cdot a}^2\cdot\InParentheses{2-c^2\InNorms{a}^2}
=\frac{\InAngles{F(z), a}^2}{\InNorms{a}^2}\cdot c^2\InNorms{a}^2\cdot\InParentheses{2-c^2\InNorms{a}^2}.
$$

One can easily verify that function $c^2\InNorms{a}^2\cdot\InParentheses{2-c^2\InNorms{a}^2}$ is maximized when $c^2\InNorms{a}^2=1\Leftrightarrow c=\frac{1}{\InNorms{a}}$.
Thus when $\{(0,\ldots,0)\} \subsetneq \unitnormal_\Z(z)$,
\begin{align*}
\max_{\substack{a\in \unitnormal_\Z(z),\\ \InAngles{F(z),a}\le 0}}\InAngles{F(z),a}^2\cdot\InParentheses{2-\InNorms{a}^2}
= &
\max_{\substack{a\in \unitnormal_\Z(z),\\ \InAngles{F(z),a}\le 0,\\ \InNorms{a}=1}}\InAngles{F(z),a}^2\cdot\InParentheses{2-\InNorms{a}^2}\\
= &
\max_{\substack{a\in \unitnormal_\Z(z),\\ \InAngles{F(z),a}\le 0,\\ \InNorms{a}=1}}\InAngles{F(z),a}^2\\
= &
\max_{\substack{a\in \unitnormal_\Z(z),\\ \InAngles{F(z),a}\le 0}}\InAngles{F(z),a}^2,
\end{align*}
which concludes the proof.
\end{proof}
}

\notshow{
In Lemma~\ref{lem:equivalent hamiltonian-2} we present another equivalent formulation of the \projham. It shows that the \projham can be seen as the squared norm of $-F(z)$ projected onto the intersection of all the tight halfspaces at $z$. 
\begin{lemma}\label{lem:equivalent hamiltonian-2}
Let $\Z$ be a closed convex set and $F:\Z\rightarrow \R^n$ be an operator. Denote $J_\Z(z) := z + T_\Z(z)$, where $T_\Z(z) = \{ z'\in \R^n: \InAngles{z',a} \le 0, \forall a \in \normal_{\Z}(z)\}$ is the tangent cone of $z$. Then all of the following quantities are equivalent: 
\argyrisnote{
\begin{itemize}
    \item $r^{tan}_{(F,\Z)}(z)$
    \item $\displaystyle\InNorms{\Pi_{J_\Z(z)}\inteval{z - F(z)} - z}$
    \item $\displaystyle\InNorms{-F(z) - \Pi_{\normal_\Z(z)}\inteval{-F(z)}}$
    \item $\displaystyle\min_{a \in \normal_{\Z}(z)} \InNorms{ F(z) + a}$
\end{itemize}
}
\end{lemma}
\begin{proof}
By definition, we have 
\begin{align*}
    \InNorms{\Pi_{J_\Z(z)}\inteval{z - F(z)} - z}^2 = \InNorms{\Pi_{T_\Z(z)}\inteval{- F(z)}}^2.
\end{align*}
By definition, the tangent cone $T_\Z(z)$ is the polar cone of the normal cone $\normal_\Z(z)$. Since $\normal_\Z(z)$ is a closed convex cone, by Moreau's decomposition theorem, we have for any vector $x \in \R^n$, 
\begin{align*}
    x = \Pi_{\normal_\Z(z)}(x) + \Pi_{T_\Z(z)}(x), \qquad \InAngles{\Pi_{\normal_\Z(z)}(x), \Pi_{T_\Z(z)}(x)} = 0.
\end{align*}
Thus it is clear that we have 
\begin{align*}
    \InNorms{\Pi_{J_\Z(z)}\inteval{z - F(z)} - z}^2 &= \InNorms{\Pi_{T_\Z(z)}\inteval{- F(z)}}^2 \\
    & = \InNorms{-F(z) - \Pi_{\normal_\Z(z)}\inteval{-F(z)}}^2.
\end{align*}

Now it suffices to show the following holds \[\min_{a \in \normal_\Z(z)} \InNorms{F(z)+ a}^2 = \InNorms{-F(z) - \Pi_{\normal_\Z(z)}\inteval{-F(z)}}^2 = \argyrisnote{r^{tan}_{F,\Z}(z)^2}.\]
Denote $a^* := \Pi_{\normal_\Z(z)}\inteval{-F(z)}$. By definition of projection, we have 
\begin{align*}
    a^* = \argmin_{a \in \normal_\Z(z)} \InNorms{F(z)+ a}^2.
\end{align*}
Thus 
\begin{align*}
    \InNorms{-F(z) - \Pi_{\normal_\Z(z)}\inteval{-F(z)}}^2 = \InNorms{F(z) + a^*}^2 = \min_{a \in \normal_\Z(z)} \InNorms{F(z)+ a}^2.
\end{align*}

For any fix non-zero $a\in \normal_\Z(z)$,  (i) if $\InAngles{F(z),a}>0$, then $ \InNorms{F(z)+ a}^2\geq \InNorms{F(z)}^2$, and (ii) if  $\InAngles{F(z),a}\leq 0$, $\InNorms{F(z)+ a}^2\geq \InNorms{F(z)-\InAngles{F(z),\frac{a}{\InNorms{a}}}\cdot \frac{a}{\InNorms{a}}}^2$, as
$$\min_{r\geq 0} \InNorms{F(z)+r\cdot a}^2=\InNorms{F(z)-\InAngles{F(z),\frac{a}{\InNorms{a}}}\cdot \frac{a}{\InNorms{a}}}^2.$$
Hence, $$\min_{a \in \normal_\Z(z)} \InNorms{F(z)+ a}^2=\min_{\substack{a'\in\\unitnormal_\Z(z)\\ \InAngles{F(z),a'}\leq 0}}\InNorms{F(z)-\InAngles{F(z),a'} \cdot a'}^2=\argyrisnote{r^{tan}_{(F,\Z)}(z)^2}.$$
The first equality is because for any $a\in \normal_\Z(z)$, there exists $a'\in \\unitnormal_\Z(z)$ so that  $\InNorms{F(z)+a}^2\geq \InNorms{F(z)-\InAngles{F(z),a'}}^2$; the second equality follows from Lemma~\ref{lem:equivalent hamiltonian}.

\notshow{
By definition of projection, we have 
\begin{align*}
    \InAngles{-F(z) - a^*, a - a^*} \le 0, \forall a \in \normal_{\Z}(z).
\end{align*}
By taking $a = 0$ and $a = 2a^*$ in the above inequality, we get
\begin{align*}
    \InAngles{ a^*,  a^* + F(z)} = \InNorms{a^*}^2 + \InAngles{a^*, F(z)} = 0.
\end{align*}

\paragraph{\bf Case 1: $\InAngles{a, F(z)} \ge 0, \forall a \in \normal_{\Z}(z)$.} In this case, it is clear that $a^* = 0$. Thus $$\min_{a \in \normal_{\Z}(z)} \InNorms{a + F(z)}^2 = \InNorms{F(z)}^2 = \argyrisnote{r^{tan}_{F,\Z}(z)^2}.$$

\paragraph{\bf Case 2: there exits $a \in \normal_{\Z}(z)$ such that $\InAngles{a, F(z)} < 0$.} Thus we know $a^* \ne 0$. Define set $S := \{a \in C_{\Z}(z): \InAngles{a, F(z)} \leq 0, a \ne 0\}$. Based on the optimality condition above, we have $\InNorms{a^*} = -\InAngles{\frac{a^*}{\InNorms{a^*}}, F(z)}$. Then we have

\begin{align*}
     \min_{a \in S} \InNorms{a + F(z)}^2 &= \min_{a \in S} \InNorms{\frac{a}{\InNorms{a}} \InNorms{a} + F(z)}^2 \\
     &= \min_{a \in S} \InNorms{- \InAngles{\frac{a}{\InNorms{a}}, F(z)} \frac{a}{\InNorms{a}} + F(z)}^2 \\
     & = \min_{a \in \unitnormal_{\Z}(z), \InAngles{a,F(z)}\leq 0} \InNorms{F(z) - \InAngles{a,F(z)}a}^2 \\
     & = H_{(F,\Z)}(z). \tag{\Cref{lem:equivalent hamiltonian}}
\end{align*}
This completes the proof.}

\end{proof}
}
\begin{prevproof}{Lemma}{lem:hamiltonian to gap}
If $\InAngles{a,F(z)} \ge 0$ for all $a \in \unitnormal(z)$, then we have $\Ham(z) = \|F(z)\|$. Thus for any $z' \in \Z$, by Cauchy-Schwarz inequality, we have
\begin{align*}
    \langle F(z),z-z' \rangle \le \|F(z)\| \|z-z'\| \le D  \cdot \Ham(z).
\end{align*}
Otherwise there exists $a \in \unitnormal(z)$ such that $\InNorms{a} = 1$, $\InAngles{a,F(z)} < 0$ and $\Ham(z) = \sqrt{\|F(z)\|^2 - \InAngles{a,F(z)}^2} = \|F(z)-\InAngles{a,F(z)}a\|$. Then for any $z'\in \Z$, we have
\begin{align*}
    \InAngles{F(z),z-z'} &= \InAngles{F(z)-\InAngles{a,F(z)}a,z-z'} + \InAngles{a,F(z)}\cdot \InAngles{a,z-z'}\\
    &\le \InAngles{F(z)-\InAngles{a,F(z)}a,z-z'}\\
    &\le \|F(z)-\InAngles{a,F(z)}a\| \|z-z'\| \\
    &\le D \cdot \Ham(z),
\end{align*}
where we use $\InAngles{a,F(z)} < 0$ and $\InAngles{a,z- z'} \ge 0$ in the first inequality and Cauchy-Schwarz inequality in the second inequality. 

If $\Z' = \X' \times \Y'$ and $F(x,y) = \begin{pmatrix}
  \nabla_x f(x,y)\\
  -\nabla_y f(x,y)
\end{pmatrix}$ for a convex-concave function $f$  then
\begin{align*}
    \dg^{\X',\Y'}_f(z) &= \max_{y'\in \Y'} f(x,y')- \min_{x'\in \X'} f(x',y) \\
    &= \max_{y'\in \Y'} (f(x,y') - f(x,y)) - \min_{x'\in \X'} (f(x',y)-f(x,y)), \\
    &\le \max_{y'\in \Y'} \InAngles{\nabla_y f(x,y),y'-y} +\max_{x'\in \X'} \InAngles{\nabla_x f(x,y),x-x'},\\
    &= \max_{z'\in \Z'} \InAngles{F(z),z-z'},\\
    &\le D \sqrt{2} \cdot \Ham(z), 
\end{align*}
where we use the fact that $f$ is a convex-concave function in the first inequality and $\|z-z'\| = \sqrt{\InNorms{x-x'}^2+\InNorms{y-y'}^2} \le \sqrt{2}D$ in the second inequality.
\end{prevproof}

\notshow{
\begin{proposition}\label{prop:compactness of normals}
$\unitnormal_{\mathcal{Z}}(z)$ as defined in Definition~\ref{def:normal cone} is a compact set for any $z\in \mathcal{Z}$.
\end{proposition}
\begin{proof}
We fix $z\in \Z$ for the rest of the proof.
First we show that $\unitnormal_\Z(z)$ is a compact set.
Let $\{a_t\in \unitnormal_\Z(z)\}_{t\geq 0}$ be any Cauchy Sequence and $a^* = \lim_{t\rightarrow +\infty}a_t$.
In order to show that $\unitnormal_\Z(z)$ is compact, it is enough to show that $a^*\in \unitnormal_\Z(z)$.
Note that if $a^*\notin \unitnormal_\Z(z)$ then $\|a\| \neq 1$ or there exists $\hz\in Z: \InAngles{a^*,\hz}>\InAngles{a^*,z}$.

Assume towards a contradiction that $\|a^*\| - 1 = \epsilon > 0$.
There exists sufficiently large $t^*> 0$ s.t. $\|a^* - a_{t^*}\| \leq \epsilon/2$. Using triangle inequality $\|a_{t^*}\| \geq \|a^*\| - \epsilon/2 = 1+\epsilon/2$ a contradiction since $a_{t^*}\in \unitnormal_\Z(z)$.
We can similarly show that assuming $\|a^*\| > 1$ leads to a contradictions.

Assume now that there exists $\hz\in Z: \InAngles{a^*,\hz - z}=\epsilon > 0$.
There exists sufficiently large $t^*> 0$ s.t. $\|a^* - a_{t^*}\| \leq \frac{\epsilon}{2\|\hz-z\|} $.
Thus $\InAngles{a_{t^*},\hz - z} \geq \InAngles{a^*,\hz - z} - \epsilon/2 = \epsilon/2$,
a contradiction.

Therefore $a^*\in \unitnormal_\Z(z)$,
which implies that set $\unitnormal_\Z(z)$ is compact.
\end{proof}
}

\section{Missing Proofs from Section~\ref{sec:best iterate}}\label{appx:best iterate}

\begin{prevproof}{Lemma}{lem:EG Best-iterate}
By Pythagorean inequality,
\begin{align}
   \|z_{k+1}-z^*\|^2 &\leq  \|z_k-\eta F(z_{k+\half})-z^*\|^2 - \|z_k-\eta F(z_{k+\half})-z_{k+1}\|^2 \notag\\
   &= \|z_k-z^*\|^2-\|z_k-z_{k+1}\|^2+2\eta\langle F(z_{k+\half}), z^*-z_{k+1}\rangle\notag\\
   &= \|z_k-z^*\|^2-\|z_k-z_{k+1}\|^2+2\eta\langle F(z_{k+\half}), z^*-z_{k+\half}\rangle+2\eta\langle F(z_{k+\half}), z_{k+\half}-z_{k+1}\rangle. \label{eq:EG first step}
\end{align}
 
We first use monotonicity of $F(\cdot)$ to argue that $\langle F(z_{k+\half}), z^*-z_{k+\half}\rangle\leq 0$.
 
\begin{fact}\label{fact:monotonicity implication}
  For all $z\in \Z$, $\langle F(z), z^*-z\rangle\leq 0$.
\end{fact}
\begin{proof}
\begin{align*}
     0 &\leq \langle F(z^*)- F(z), z^*-z\rangle \qquad &(\text{monotonicity of $F(\cdot)$})\\
     &=  \langle F(z^*), z^*-z\rangle -  \langle F(z), z^*-z\rangle &\\
     & \leq - \langle F(z), z^*-z\rangle \qquad &(\text{optimality of $z^*$ and $z\in \Z$})
 \end{align*}
\end{proof}

We can simplify \Cref{eq:EG first step} using \Cref{fact:monotonicity implication}:
 \begin{align}
        \|z_{k+1}-z^*\|^2 &\leq \|z_k-z^*\|^2-\|z_k-z_{k+1}\|^2+2\eta\langle F(z_{k+\half}), z_{k+\half}-z_{k+1}\rangle \notag\\
        &= \|z_k-z^*\|^2-\|z_k-z_{k+\half}\|^2-\|z_{k+\half}-z_{k+1}\|^2-2\langle z_k-\eta F(z_{k+\half})-z_{k+\half}, z_{k+\half}-z_{k+1}\rangle \notag\\
        &= \|z_k-z^*\|^2-\|z_k-z_{k+\half}\|^2-\|z_{k+\half}-z_{k+1}\|^2 \notag\\ 
        &\qquad \qquad -2\langle z_k-\eta F(z_{k})-z_{k+\half}, z_{k+\half}-z_{k+1}\rangle - 2\langle \eta F(z_{k})-\eta F(z_{k+\half}), z_{k+\half}-z_{k+1}\rangle \notag\\
        &\leq \|z_k-z^*\|^2-\|z_k-z_{k+\half}\|^2-\|z_{k+\half}-z_{k+1}\|^2 - 2\eta \langle  F(z_{k})- F(z_{k+\half}), z_{k+\half}-z_{k+1} \rangle \notag 
 \end{align}
 The last inequality is because $\langle z_k-\eta F(z_{k})-z_{k+\half}, z_{k+\half}-z_{k+1}\rangle \geq 0$, which follows from the that fact that $z_{k+\half}=\Pi_\Z[z_k-\eta F(z_{k})]$ and $z_{k+1}\in \Z$.
 
 Finally, since $F(\cdot)$ is $L$-Lipschitz, we know that \begin{equation*}
     -\langle  F(z_{k})- F(z_{k+\half}), z_{k+\half}-z_{k+1}\rangle \leq \| F(z_{k})- F(z_{k+\half})\|\cdot \|z_{k+\half}-z_{k+1}\|\leq L\| z_{k}-z_{k+\half}\|\cdot \|z_{k+\half}-z_{k+1}\|.
 \end{equation*}
 So we can further simplify the inequality as follows:
 \begin{align}
     \left\|z_{k+1}-z^*\right\|^2 \leq & \|z_k-z^*\|^2-\|z_k-z_{k+\half}\|^2-\|z_{k+\half}-z_{k+1}\|^2 - 2\eta \langle  F(z_{k})- F(z_{k+\half}), z_{k+\half}-z_{k+1}\rangle \notag \\
    \leq & \|z_k-z^*\|^2-\|z_k-z_{k+\half}\|^2-\|z_{k+\half}-z_{k+1}\|^2 + 2\eta L \| z_{k}-z_{k+\half}\|\cdot \|z_{k+\half}-z_{k+1}\| \notag \\
    \leq & \|z_k-z^*\|^2-(1-\eta^2 L^2)\|z_k-z_{k+\half}\|^2 \notag
 \end{align}
 
 Hence, \begin{align*}
     \left\|z_k-z^*\right\|^2\geq  \left\|z_{k+1}-z^*\right\|^2+(1-\eta^2 L^2) \|z_k-z_{k+\half}\|^2. 
 \end{align*} 
\end{prevproof}

\begin{prevproof}{Lemma}{lem:bound hamiltonian by distance}
We need the following fact for our proof.
\begin{fact}\label{fact:close}
$\InNorms{z_{k+\half} - z_{k+1}}\leq \eta L \InNorms{z_k-z_{k+\half}}$. Moreover, when $\eta L < 1$, $\InNorms{z_{k+\half} - z_{k+1}}\leq \frac{\InNorms{z_k-z_{k+1}}}{1 - \eta L}$.
\end{fact}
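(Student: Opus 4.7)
The plan is to derive both inequalities from only two ingredients: (i) the non-expansiveness of Euclidean projection onto the closed convex set $\Z$, and (ii) the $L$-Lipschitzness of $F$. No deeper structure of the EG update is needed beyond the definitions~\eqref{def:k+1/2-th step} and~\eqref{def:k+1-th step}.

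For the first inequality, I would write the two iterates as $z_{k+\half} = \Pi_\Z[z_k - \eta F(z_k)]$ and $z_{k+1} = \Pi_\Z[z_k - \eta F(z_{k+\half})]$. The two points being projected differ only by $\eta(F(z_{k+\half}) - F(z_k))$, so non-expansiveness of $\Pi_\Z$ gives $\InNorms{z_{k+\half} - z_{k+1}} \leq \eta\InNorms{F(z_k) - F(z_{k+\half})}$. Applying $L$-Lipschitzness of $F$ then bounds the right-hand side by $\eta L \InNorms{z_k - z_{k+\half}}$, which is the first bound.

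For the second inequality, under the assumption $\eta L < 1$, I would apply the triangle inequality to the triple $z_k, z_{k+\half}, z_{k+1}$ to obtain
$$\InNorms{z_k - z_{k+\half}} \leq \InNorms{z_k - z_{k+1}} + \InNorms{z_{k+1} - z_{k+\half}} \leq \InNorms{z_k - z_{k+1}} + \eta L \InNorms{z_k - z_{k+\half}},$$
where the second step invokes the inequality just established. Rearranging yields $\InNorms{z_k - z_{k+\half}} \leq \InNorms{z_k - z_{k+1}}/(1-\eta L)$, and plugging this back into the first inequality together with $\eta L < 1$ gives $\InNorms{z_{k+\half} - z_{k+1}} \leq \eta L \InNorms{z_k - z_{k+\half}}/(1) \leq \InNorms{z_k - z_{k+1}}/(1-\eta L)$, as desired.

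There is no genuinely difficult step here: the whole argument is a routine application of projection non-expansiveness followed by a triangle-inequality rearrangement. I therefore do not anticipate any real obstacle; the only thing to be careful about is preserving the direction of the inequality when solving for $\InNorms{z_k-z_{k+\half}}$, which requires $1-\eta L > 0$ and thereby explains the hypothesis $\eta L < 1$ in the second part of the statement.
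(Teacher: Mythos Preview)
Your proposal is correct and follows essentially the same argument as the paper: non-expansiveness of $\Pi_\Z$ plus $L$-Lipschitzness of $F$ give the first inequality, and the triangle inequality on $z_k, z_{k+\half}, z_{k+1}$ followed by a rearrangement (using $1-\eta L>0$) yields the bound $\InNorms{z_k-z_{k+\half}}\le \InNorms{z_k-z_{k+1}}/(1-\eta L)$, from which the second inequality follows. The paper's write-up uses the reverse triangle inequality $\InNorms{z_k-z_{k+1}}\ge \InNorms{z_k-z_{k+\half}}-\InNorms{z_{k+\half}-z_{k+1}}$ instead of your additive form, but the two are equivalent and the overall route is the same.
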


\begin{proof}
Recall that $z_{k+\frac{1}{2}} = \Pi_\Z\left[z_k-\eta F(z_k)\right]$ and $z_{k+1} = \Pi_\Z\left[z_k-\eta F(z_{k+\frac{1}{2}})\right]$.
By the non-expansiveness of the projection operator and the $L$-Lipschitzness of operator $F$,
we have that $\InNorms{z_{k+\half}-z_{k+1}}\leq \InNorms{\eta (F(z_{k+\half})- F(z_k))}\leq \eta L\InNorms{z_k - z_{k+\half}}$.

Finally, by the triangle inequality $$\InNorms{z_k-z_{k+1}}\geq \InNorms{z_k-z_{k+\half}} -\InNorms{z_{k+\half}-z_{k+1}}\geq\InParentheses{1 - \eta L}\InNorms{z_k-z_{k+\half}}.$$
\end{proof}

Now we prove Lemma~\ref{lem:bound hamiltonian by distance}.
By the $L$-Lipschitzness of operator $F$ we have
\begin{align}
\|F(z_{k+1})-F(z_{k+\half})\|\leq L \|z_{k+1}-z_{k+\half}\| \leq \eta L^2\|z_k-z_{k+\half}\|.\label{eq:lipschitz distance}
\end{align}
Recall that $z_{k+1} = \Pi_\Z\left[z_k-\eta F(z_{k+\frac{1}{2}})\right]$. Using \Cref{lem:upper bound tangent residual}, we have
\begin{align}
\Ham(z_{k+1})
\leq& \left\|\frac{z_k-z_{k+1}}{\eta}+F(z_{k+1})- F(z_{k+\frac{1}{2}})\right\| \notag \\
\leq & \frac{\|z_k-z_{k+1}\|}{\eta} + \|F(z_{k+1})- F(z_{k+\frac{1}{2}})\| \notag \\
 \leq & \frac{\|z_k-z_{k+1}\| + (\eta L)^2\|z_k-z_{k+\half}\|}{\eta} \notag\\ 
\leq & \frac{||z_k-z_{k+\half}|| + ||z_{k+\half}-z_{k+1}|| + (\eta L)^2||z_k-z_{k+\half}||}{\eta} \notag\\
\leq &\left(1+ \eta L + (\eta L)^2 \right)\frac{||z_k-z_{k+\half}||}{\eta}. \notag
\end{align}
The second and the fourth inequality follow from the triangle inequality.
The third inequality follows from Equation~\eqref{eq:lipschitz distance}.
In the final inequality we 
use $||z_{k+\half}-z_{k+1}||\leq \eta L ||z_k-z_{k+\half}||$ by Fact~\ref{fact:close}.
\notshow{
Note that if $z_{k+1} - z_k + \eta F(z_{k+\half}) =0 $,
then by Definition~\ref{def:projected hamiltonian} vector $(0,\ldots,0)\in \unitnormal(z_{k+1})$ and the proof still holds. Finally,
$$\sqrt{H(z_{k+1})} \leq \left(1+ \frac{(\eta L )^2}{1-\eta L} \right)\frac{||z_k-z_{k+1}||}{\eta}$$ follows from the combination of  Fact~\ref{fact:close} and Inequality~\eqref{eq:last iterate strongly monotone}.}
\notshow{
Since $z_{k+1} = \Pi_\Z\left[z_k-\eta F(z_{k+\frac{1}{2}})\right]$, for every $z\in \Z$ we have:
$$\langle z_k - \eta F(z_{k+\half}) - z_{k+1} , z_{k+1} \rangle \geq \langle z_k - \eta F(z_{k+\half}) - z_{k+1} , z \rangle.
$$
For the rest of the proof we assume that $z_k - \eta F(z_{k+\half}) - z_{k+1}\neq 0$, we show how to remove this assumption at the end of the proof.
Thus vector  $a = \frac{z_k - \eta F(z_{k+\half}) - z_{k+1} }{\InNorms{z_k - \eta F(z_{k+\half}) - z_{k+1}}} \in \unitnormal(z_{k+1})$.
Let

$$a_\perp =
\begin{cases}
\frac{F(z_{k+1})-  \langle a, F(z_{k+1})\rangle \cdot a}{\InNorms{F(z_{k+1})-  \langle a, F(z_{k+1})\rangle \cdot a}} \qquad & \text{if $\InNorms{F(z_{k+1})-  \langle a, F(z_{k+1})\rangle \cdot a}\neq 0$}\\
\InParentheses{0,\ldots,0} & \text{otherwise}
\end{cases}.
$$

Observe that $\langle a, a\rangle = 1$, $\langle a_\perp, a\rangle = 0$ and $F(z_{k+1}) =\langle a, F(z_{k+1})\rangle a + \langle a_\perp, F(z_{k+1})\rangle a_\perp $.

By Definition~\ref{def:projected hamiltonian} we have 

\begin{align}
\Ham(z_{k+1})\leq & \sqrt{||F(z_{k+1})||^2 - \langle a, F(z_{k+1})\rangle^2\cdot \mathbbm{1}[\langle a, F(z_{k+1})\rangle \leq 0]}
\nonumber\\
=&
\sqrt{\langle a_\perp, F(z_{k+1})\rangle^2
+
\langle a, F(z_{k+1})\rangle^2\cdot \mathbbm{1}[\langle a, F(z_{k+1})\rangle > 0]}.    \label{eq:bound hamiltonian by distance}
\end{align}

We first present a few inequalities that will help us to prove the claim. Let $\epsilon = F(z_{k+1})- F(z_{k+\half})$.
By Claim~\ref{fact:close} we have $\|z_{k+\half} - z_{k+1}\|\leq \eta L \|z_k-z_{k+\half}\|$ and 
by the $L$-Lipschitzness of operator $F$ we have
\begin{align}
\|\epsilon\|=\|F(z_{k+1})-F(z_{k+\half})\|\leq L \|z_{k+1}-z_{k+\half}\| \leq \eta L^2\|z_k-z_{k+\half}\|.\label{eq:lipschitz distance}
\end{align}

Note that
\begin{align}
&0=\langle a_\perp, a\rangle =\langle a_\perp, z_{k+1} -z_k +  \eta F(z_{k+\half})\rangle\nonumber \\
\Leftrightarrow &\langle a_\perp, F(z_{k+\half})\rangle = \frac{\langle a_\perp, z_k -z_{k+1}\rangle}{\eta}.
\label{eq:penpedicular part}
\end{align}
Moreover, the fact that
$0\geq\langle a, z_{k+1} - z_k + \eta F(z_{k+\half}) \rangle
\Leftrightarrow \frac{\langle a, z_k - z_{k+1} \rangle }{\eta}\geq\langle a, F(z_{k+\half}) \rangle
\label{eq:regular part a}
$, which further imiplies that
 \begin{align}
&\langle a, F(z_{k+1}) \rangle = \langle a, F(z_{k+\half})+ \epsilon \rangle \leq  \langle a, \frac{z_k-z_{k+1}}{\eta} \rangle + \langle a, \epsilon \rangle. 
\label{eq:normal part}
\end{align}

Combining Equation~\eqref{eq:bound hamiltonian by distance}, Equation~\eqref{eq:penpedicular part} and Equation~\eqref{eq:normal part} we have 

\begin{align}
\Ham(z_{k+1})
\leq& \left\|\frac{z_k-z_{k+1}}{\eta}+\epsilon\right\| \notag \\
\leq & \frac{\|z_k-z_{k+1}\|}{\eta} + \|\epsilon\| \notag \\
 \leq & \frac{\|z_k-z_{k+1}\| + (\eta L)^2\|z_k-z_{k+\half}\|}{\eta} \label{eq:last iterate strongly monotone} \\
\leq & \frac{||z_k-z_{k+\half}|| + ||z_{k+\half}-z_{k+1}|| + (\eta L)^2||z_k-z_{k+\half}||}{\eta} \notag\\
\leq &\left(1+ \eta L + (\eta L)^2 \right)\frac{||z_k-z_{k+\half}||}{\eta}. \notag
\end{align}
The first inequality is because (i) $\langle a_\perp, F(z_{k+1})\rangle=\langle a_\perp, \frac{z_k-z_{k+1}}{\eta}+\epsilon \rangle$ and (ii) $\langle a, F(z_{k+1})\rangle^2\cdot \ind[\langle a, F(z_{k+1})\rangle > 0]\leq \langle a, \frac{z_k-z_{k+1}}{\eta}+\epsilon \rangle^2$.
The second inequality is because $a$ and $a_\perp$ are both unit vectors, and they are orthogonal to each other when $\InNorms{F(z_{k+1})-  \langle a, F(z_{k+1})\rangle \cdot a}\neq 0$, and because $a_\perp$ is $\InParentheses{0,\ldots,0}$ otherwise. The third and the fifth inequality follow from the triangle inequality.
The fourth inequality follows from Equation~\eqref{eq:lipschitz distance}.
In the final inequality we 
use $||z_{k+\half}-z_{k+1}||\leq \eta L ||z_k-z_{k+\half}||$ by Fact~\ref{fact:close}.
\notshow{
Note that if $z_{k+1} - z_k + \eta F(z_{k+\half}) =0 $,
then by Definition~\ref{def:projected hamiltonian} vector $(0,\ldots,0)\in \unitnormal(z_{k+1})$ and the proof still holds. Finally,
$$\sqrt{H(z_{k+1})} \leq \left(1+ \frac{(\eta L )^2}{1-\eta L} \right)\frac{||z_k-z_{k+1}||}{\eta}$$ follows from the combination of  Fact~\ref{fact:close} and Inequality~\eqref{eq:last iterate strongly monotone}.}
}
\end{prevproof}

\begin{prevproof}{Lemma}{lem:best iterate hamiltonian}
By Lemma~\ref{lem:EG Best-iterate} we have 
$$
     \left\|z_0-z^*\right\|^2\geq  \left\|z_{T+1}-z^*\right\|^2+(1-\eta^2 L^2) \sum_{k=0}^T \|z_k-z_{k+\half}\|^2
\geq 
(1-\eta^2 L^2) \sum_{k=0}^T \|z_k-z_{k+\half}\|^2
$$
Thus there exists a $t^*\in [T]$ such that $\|z_{t^*}-z_{t^*+\half}\|^2 \leq \frac{\|z_0-z^*\|^2}{T(1-\eta ^2L^2)}$.
We conclude the proof by applying Lemma~\ref{lem:bound hamiltonian by distance}. \end{prevproof}

\section{Missing Proofs from Section~\ref{sec:warm up}}\label{appx:warm up}
\begin{proposition}\label{prop:verify unconstrained}
\begin{align*}
    \|F(z_k)\|^2 - \|F(z_{k+1})\|^2 &+ 2\cdot \InAngles{ F(z_{k+1}) - F(z_k) ,F(z_{k+\frac{1}{2}})} \\
    &+ \InParentheses{\InNorms{F(z_{k+\frac{1}{2}})-F(z_{k+1})}^2 -\InNorms{F(z_{k+\frac{1}{2}}) - F(z_k)}^2} = 0.
\end{align*}
\end{proposition}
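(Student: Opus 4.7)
The claim is a purely algebraic identity: once we abbreviate $a := F(z_k)$, $b := F(z_{k+\half})$, $c := F(z_{k+1})$, it becomes
\[
\|a\|^2 - \|c\|^2 + 2\langle c-a, b\rangle + \|b-c\|^2 - \|b-a\|^2 \stackrel{?}{=} 0,
\]
with no dependence on the EG update rule, monotonicity, or Lipschitzness; those ingredients enter only in the surrounding argument (to give the signs of the two parenthesized terms), not in this identity itself.

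The plan is to just expand the two squared norms using the polarization formula $\|u-v\|^2 = \|u\|^2 - 2\langle u,v\rangle + \|v\|^2$. Concretely, I would write
\[
\|b-c\|^2 - \|b-a\|^2 = \bigl(\|b\|^2 - 2\langle b,c\rangle + \|c\|^2\bigr) - \bigl(\|b\|^2 - 2\langle b,a\rangle + \|a\|^2\bigr) = \|c\|^2 - \|a\|^2 + 2\langle b, a-c\rangle,
\]
and note $2\langle c-a, b\rangle = -2\langle b, a-c\rangle$. Substituting these two observations into the left-hand side of the proposition gives
\[
\|a\|^2 - \|c\|^2 - 2\langle b, a-c\rangle + \|c\|^2 - \|a\|^2 + 2\langle b, a-c\rangle,
\]
and every term cancels, yielding $0$ as desired.

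There is no real obstacle here; the only thing to be careful about is bookkeeping of signs when expanding $\|b-c\|^2 - \|b-a\|^2$ and matching it against the inner product term $2\langle c-a, b\rangle$. The entire proof is two lines of elementary algebra, so in the final write-up I would just display the two expansions above and conclude, without invoking any properties of $F$, $\Z$, or $\eta$.
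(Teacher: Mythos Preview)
Your proposal is correct and takes essentially the same approach as the paper: both arguments simply expand the squared norms and observe that every term cancels. Your abbreviation $a,b,c$ and grouping of the two expansions is slightly tidier than the paper's fully written-out version, but the underlying computation is identical.
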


\begin{proof}
Expanding the LHS of the equation in the statement we can verify that
\begin{align*}
    \|F(z_k)\|^2 - \|F(z_{k+1})\|^2 &+ 2\cdot \InAngles{ F(z_{k+1}) ,F(z_{k+\frac{1}{2}})} -  2\cdot \InAngles{ F(z_k) ,F(z_{k+\frac{1}{2}})} \\
    &+ \InNorms{F(z_{k+\frac{1}{2}})}^2 - 2\cdot \InAngles{ F(z_{k+1}) ,F(z_{k+\frac{1}{2}})}+\InNorms{F(z_{k+1})}^2 \\
    &-\InNorms{F(z_{k+\frac{1}{2}})}^2 + 2\cdot \InAngles{ F(z_k) ,F(z_{k+\frac{1}{2}})} - \InNorms{F(z_k)}^2 = 0.
\end{align*}
\end{proof}

\section{Non-Monotonicity of Several Standard Performance Measures}\label{sec:counterexamples}

We conduct numerical experiments by trying to find saddle points in constrained bilinear games using EG,
and verified that the following performance measures are not monotone:
the (squared) natural residual, $\InNorms{z_k - z_{k+\half}}^2$, $\InNorms{z_{k} - z_{k+1}}^2$, $\max_{z \in \Z} \InAngles{F(z), z_k - z}$, $\max_{z \in \Z} \InAngles{F(z_k), z_k - z}$. 

All of our counterexamples are constructed by trying to find a saddle point in bilinear games of the following form:

\begin{align}
    \min_{x\in \X} \max_{y \in \Y} x^\top A y - b^\top x - c^\top y\label{game:bilinear counterexample}
\end{align}
where $\X, \Y \subseteq \R^2$,
$A$ is a $2\times2$ matrix and $b,c$ are $2$-dimensional column vectors.
All of the instances of the bilinear game considered in this section have $\X, \Y = [0,10]^2$.
We denote by $\Z = \X\times \Y$ and by $F(x,y) =\begin{pmatrix} Ay - b\\ -A^\top x+c\end{pmatrix}:\Z\rightarrow \R^n$.
We remind readers that finding a saddle point of bilinear game \eqref{game:bilinear counterexample}, is equivalent to solving the monotone VI with operator $F(z)$ on set $\Z$.

\subsection{Non-Monotonicity of the Natural Residual and its Variants}

\paragraph{Performance Measure: Natural Residual.} Let $A = \begin{bmatrix}
1 & 2 \\
1 & 1
\end{bmatrix}$, $b=c = \begin{bmatrix}
1 \\ 1
\end{bmatrix}$.
Running the EG method on the corresponding VI problem with step-size $\eta = 0.1$ starting at $z_0 = (0.3108455, 0.4825575, 0.4621875, 0.5768655)^T$ has the following trajectory:
\begin{align*}
    &z_1 = (0.24923465, 0.47967569, 0.43497808, 0.57458145)^T,\\
    &z_2 = (0.19396855, 0.48164918, 0.40193211, 0.56061753)^T.
\end{align*}
Thus we have
\begin{align*}
    r^{nat}(z_0)^2 =& 0.15170013184049996, \\
    r^{nat}(z_1)^2 =& 0.13617654362050116, \\
    r^{nat}(z_2)^2 =& 0.16125792556139756.
\end{align*}
It is clear that the natural residual is not monotone.

\paragraph{Performance Measure: $\InNorms{z_k - z_{k+\half}}^2$.} Note that the norm of the operator mapping defined in~\citep{diakonikolas_halpern_2020} is exactly $\frac{1}{\eta}\cdot \InNorms{z_k - z_{k+\half}}$. Let $A = \begin{bmatrix}
0.50676631 & 0.15042569\\
0.46897595 & 0.96748026
\end{bmatrix} $, $b=c = \begin{bmatrix}
1 \\ 1
\end{bmatrix}$.
Running the EG method on the corresponding VI problem with step-size $\eta = 0.1$ starting at $z_0 = (2.35037432, 0.00333996, 1.70547279, 0.71065999)^T$ has the following trajectory:
\begin{align*}
    z_{\half} =& (2.35325656, 0 , 1.72473848, 0.64633879)^T, \\
    z_1 =& (2.35324779, 0, 1.72472791, 0.64605901)^T, \\
    z_{1+\half} =& (2.35612601, 0, 1.74398258, 0.58145791)^T \\
    z_2 =& (2.35612201, 0 , 1.74412844, 0.5815012)^T, \\
    z_{2+\half} =& (2.35898819, 0, 1.76352876, 0.51694333)^T.
\end{align*}
Thus we have
\begin{align*}
    \InNorms{z_0 - z_{\half}}^2 =& 0.00452784581555656, \\
    \InNorms{z_1 - z_{1+\half}}^2 =& 0.004552329544896258, \\
    \InNorms{z_2 - z_{2+\half}}^2 =& 0.004552306444552208.
\end{align*}
It is clear that the $\InNorms{z_k -z_{k+\half}}^2$ is not monotone.

\paragraph{Performance Measure: $\InNorms{z_k - z_{k+1}}^2$.} Let $A = \begin{bmatrix}
0.50676631 & 0.15042569\\
0.46897595 & 0.96748026
\end{bmatrix} $, $b=c = \begin{bmatrix}
1 \\ 1
\end{bmatrix}$.
Running the EG method on the corresponding VI problem with step-size $\eta = 0.1$ starting at $z_0 = (2.37003485, 0, 1.84327237, 0.25934775)^T$ has the following trajectory:
\begin{align*}
    z_1 =& (2.37267186, 0, 1.86351397, 0.1950396)^T,\\
    z_2 =& (2.37524308, 0, 1.88388624, 0.13077023)^T, \\
    z_3 =& (2.37774149, 0.00426125, 1.90438549, 0.06653856)^T.
\end{align*}
\notshow{
Thus we have
\begin{align*}
    &\InNorms{z_\half - z_{1}}^2 = 2.278045513420713 \times 10^{-8}, \\
    &\InNorms{z_{1+\half} - z_{2}}^2 = 2.2716975319731723 \times 10^{-8}, \\
    &\InNorms{z_{2+\half} - z_{3}}^2 = 1.8180916070316384 \times 10^{-5}.
\end{align*}
It is clear that the $\InNorms{z_{k+\half} -z_{k+1}}^2$ is not monotone.
}
Thus we have
\begin{align*}
    \InNorms{z_0 - z_{1}}^2 =& 0.004552214685275266, \\
    \InNorms{z_1 - z_{2}}^2 =& 0.004552191904998012, \\
    \InNorms{z_2 - z_{3}}^2 =& 0.004570327450598002.
\end{align*}
It is clear that the $\InNorms{z_{k} -z_{k+1}}^2$ is not monotone.

\subsection{Non-Monotonicity of the Gap Functions and its Variant}
\notshow{
We show that $\max_{z \in \Z}\InAngles{F(z), z_k - z}$ and $\max_{z \in \Z}\InAngles{F(z_k), z_k - z}$ of the EG updates are not monotone. We consider the following bilinear game over the 4-dimension unit cube $\Z = [0,1]^4$:
\begin{align*}
    \min_{x \in [0,1]^2} \max_{y \in [0,1]^2} x^\top Ay.
\end{align*}
Denote $z = (x,y)$. Note that the above problem is equivalent to the the monotone VI with $F(z):= (Ay, -A^\top x)^\top$. Since for any $z, z'\in \Z$, $\InAngles{F(z)-F(z'), z -z'} = 0$, we know 
\begin{align*}
    \InAngles{F(z), z - z'} = \InAngles{F(z'), z - z'}.
\end{align*}
Thus we have 
\begin{align*}
    \max_{z \in \Z}\InAngles{F(z), z_k - z} = \max_{z \in \Z}\InAngles{F(z_k), z_k - z} = \max_{y \in [0,1]^2} x_k^\top A y - \min_{x \in [0,1]^2} x^\top A y_k.
\end{align*}
We denote $\displaystyle \gap(z_k):= \max_{y \in [0,1]^2} x_k^\top A y - \min_{x \in [0,1]^2} x^\top A y_k$ as the duality gap at $z_k$. It suffices to show $\gap(z_k)$ is not monotone. 
}
\paragraph{Performance Measure: Gap Function and $\max_{z \in \Z}\InAngles{F(z), z_k - z}$.} 

Let $A = \begin{bmatrix}
-0.21025101 & 0.22360196\\
0.40667685 &-0.2922158
\end{bmatrix}$, $b=c = \begin{bmatrix}
0 \\ 0
\end{bmatrix}$.
One can easily verify that $\InAngles{F(z), z_k - z}=\InAngles{F(z_k), z_k - z}$,
which further implies that  $\max_{z \in \Z}\InAngles{F(z), z_k - z}=  \max_{z \in \Z}\InAngles{F(z_k), z_k - z}=\gap(z_k)$,
which implies that non-monotonicity of the gap function implies non-monotonicity of $\max_{z \in \Z}\InAngles{F(z), z_k - z}$.
Running the EG method on the corresponding VI problem with step-size $\eta = 0.1$ starting at $z_0 = (0.53095379, 0.29084076, 0.62132986, 0.49440498)$ has the following trajectory:
\begin{align*}
    &z_1 = (0.53290086, 0.28009156, 0.62151204, 0.4981395)^T, \\
    &z_2 = (0.5347502,  0.26947398, 0.62122195, 0.50222691)^T.
\end{align*}
One can easily verify that
\begin{align*}
    \gap(z_0) =& 0.6046398415472187, \\
    \gap(z_1) =& 0.58462873354003214, \\
    \gap(z_2) =& 0.5914026255469654.
\end{align*}
It is clear that the duality gap is not monotone.

\notshow{
\argyrisnote{Argyris: I would suggest not to include the following paragraph and figure.}
We also run EG on the same instance with starting point $x_0 = (0.5, 0.5)$ and $y_0 = (0.5, 0.5)$, and plot the duality gap in each round as shown in \Cref{fig:duality gap nonnomotone}.

\begin{figure}[h]
\centering
\includegraphics[width=0.5\textwidth]{COLT/non_monotone.png}
\caption{Numerical results of the non-monotonicity of the duality gap in EG}
\label{fig:duality gap nonnomotone}
\end{figure}
}

\section{Optimistic Gradient Descent Ascent Algorithm}\label{appx:OGDA}

Let $\Z \subseteq \R^n$ be a closed convex set and $F : \Z \rightarrow \R$ be an operator.  Let $z_k$ and $w_k$ be the $k$-th iterate of the Optimistic Gradient Descent Ascent algorithm (OGDA) algorithm. Let $z_0, w_0$ be arbitrary point in $\Z$ and $\{z_k, w_k\}_{k\ge 0}$ be the iterated of the OGDA algorithm. The update rule for any $k \ge 0$ is as follows:
\begin{nalign}
     w_{k+1} &= \Pi_\Z\left[z_k-\eta F(w_k)\right]=\arg \min_{z\in \Z} \| z-\left(z_k-\eta F(w_k)\right)\|\\
     z_{k+1} & = \Pi_\Z\left[z_k-\eta F(w_{k+1})\right]= \arg \min_{z\in \Z} \left \| z-\left(z_k-\eta F(w_{k+1})\right)\right \|
\end{nalign}
We prove last-iterate convergence for OGDA with respect to the gap function, natural residual and tangent residual in \Cref{thm: OGDA last-iterate at constrained} at Section~\ref{sec:combine OGDA}.
The last-iterate convergence proof for OGDA is a simple extension of the proof for EG.
The last-iterate convergence for the performance measures we mentioned follow from the last-iterate convergence of the following monotonically decreasing potential function:
\begin{align}\label{eq:potential OGDA}
\Phi_k = \InNorms{F(z_k)-F(w_k)}^2 + r^{tan}(z_k)^2
\end{align}

In Section~\ref{sec:best iterate OGDA} we show that OGDA enjoys last-iterate convergence with respect to the quantity $\InNorms{z_k - w_{k+1}}$ \citep{wei_last-iterate_2021,hsieh_convergence_2019} and in Section~\ref{sec:best to potential} we show how to upper bound the potential function $\Phi_k$ by the best-iterate.
In Section~\ref{sec:OGDA monotone} we show that the potential function $\Phi_k$ is monotonically decreasing across iterates and finally in Section~\ref{sec:combine OGDA} we show how to translate the last-iterate convergence with respect to the potential function $\Phi_k$ to last-iterate convergence of the performance measures of interest.

\subsection{Best-Iterate Convergence of OGDA with Constant Step Size}\label{sec:best iterate OGDA}

Best-iterate convergence guarantees for OGDA are known \citep{wei_last-iterate_2021} and can easily be derived by \cite{hsieh_convergence_2019}.
We include the proof here for completeness. 
\begin{lemma}\label{lem:OGDA Best-iterate}
Let $\Z \subseteq \R^n$ be a closed convex set, $F: \Z \rightarrow \R$ be a monotone and $L$-Lipschitz operator, and $z^*$ be a saddle point. Let $z_0, w_0 \in \Z$ be arbitrary starting points and $\{z_k,w_k\}_{k\ge 0}$ be the iterates of the OGDA algorithm with any step size $\eta \in  (0,\frac{1}{2L})$. Then for all $T \ge 1$, 
\begin{equation}\label{eq:best-iterate-1}
     \sum_{k=0}^T \InNorms{z_k - w_{k+1}}^2 \le \frac{1-2\eta^2 L^2}{1- 4\eta^2 L^2} \InNorms{z_0-z^*}^2 + \frac{2\eta^2 L^2}{1 - 4\eta^2 L^2} \InNorms{w_0 - z_0}^2.
\end{equation}
\end{lemma}
\begin{prevproof}{Lemma}{lem:OGDA Best-iterate}
In order to upper bound $\sum_{k=0}^T \InNorms{w_k - w_{k+1}}^2$, we first relate the quantity $\InNorms{w_k-w_{k+1}}^2$ to the weighted sum of $\{\InNorms{z_t - w_{t+1}}^2 \}_{0 \le t \le k}$.
\begin{lemma}\label{Lem: best-induction}
For all $k \ge 0$, 
\begin{align}\label{eq: best-induction-1}
    \InNorms{w_k-w_{k+1}}^2 \le 2(2\eta^2 L^2)^k\InNorms{w_0 - z_0}^2 + \sum_{t=0}^{k} 2 (2\eta^2 L^2)^{t} \InNorms{z_{k-t}- w_{k+1 -t}}^2.
\end{align}
Moreover, for all $T \ge 0$,
\begin{align}\label{eq: best-induction-2}
    \sum_{k = 0}^T \InNorms{w_k - w_{k+1}}^2 \le \frac{2}{1 - 2\eta^2 L^2} \InParentheses{\InNorms{w_0 - z_0}^2  + \sum_{k=0}^T \InNorms{z_k - w_{k+1}}^2}.
\end{align}
\end{lemma}
\begin{proof}
We first prove \Cref{eq: best-induction-1} by induction. Note that for all $k \ge 0$, we have 
\begin{align}
    \InNorms{w_k - w_{k+1}}^2 &= \InNorms{w_k - z_k + z_k - w_{k+1}}^2 \notag\\
    & \le 2\InNorms{w_k - z_k}^2 + 2\InNorms{z_k - w_{k+1}}^2. \label{eq:OGDA-5}
\end{align}
The inequality follows from the fact that $(a+b)^2\leq 2a^2 + 2b^2$.
Thus \Cref{eq: best-induction-1} holds for the base case $k = 0$.  
For the sake of induction, we assume that \Cref{eq: best-induction-1} holds for some $k -1 \ge 0$. Using the update rule of OGDA, the non-expansiveness of the projection operator, and the $L$-Lipschitzness of $F$, for all $k \ge 1$ we have 
\begin{align}\label{eq:OGDA-9}
    \InNorms{w_k - z_k}^2 \le  \eta^2 \InNorms{F(w_{k-1}) - F(w_k)}^2 \le \eta^2 L^2 \InNorms{w_{k-1} - w_k}^2.
\end{align}
Combining \Cref{eq:OGDA-5}, \Cref{eq:OGDA-9}, and the induction assumption, we have 
\begin{align*}
    \InNorms{w_k - w_{k+1}}^2 &\le 2\InNorms{w_k - z_k}^2 + 2\InNorms{z_k - w_{k+1}}^2\notag \\ 
    & \le 2\eta^2 L^2 \InNorms{w_{k-1} - w_k}^2 + 2\InNorms{z_k - w_{k+1}}^2 \\
    &\le 2\eta^2 L^2 \InParentheses{ 2 (2\eta^2 L^2)^{k-1} \InNorms{w_0-z_0}^2 +  \sum_{t=0}^{k-1} 2 (2\eta^2 L^2)^{t} \InNorms{z_{k-1-t}- w_{k -t}}^2} + 2\InNorms{z_k - w_{k+1}}^2 \\
    & = 2 (2\eta^2 L^2)^{k} \InNorms{w_0-z_0}^2 + \sum_{t=1}^{k} 2 (2\eta^2 L^2)^{t} \InNorms{z_{k-t}- w_{k+1 -t}}^2 + 2\InNorms{z_k - w_{k+1}}^2 \\
    & = 2 (2\eta^2 L^2)^{k} \InNorms{w_0-z_0}^2 + \sum_{t=0}^{k} 2 (2\eta^2 L^2)^{t} \InNorms{z_{k-t}- w_{k+1 -t}}^2. 
\end{align*}
This completes the proof of \Cref{eq: best-induction-1}.

Summing \Cref{eq: best-induction-1} with $k = 0, 1, \cdots, T$, we have 
\begin{align*}
    \sum_{k = 0}^T \InNorms{w_k - w_{k+1}}^2 &\le \sum_{k = 0}^T2 (2\eta^2 L^2)^{k} \InNorms{w_0-z_0}^2 +   \sum_{k = 0}^T \sum_{t=0}^{k} 2 (2\eta^2 L^2)^{t} \InNorms{z_{k-t}- w_{k+1 -t}}^2 \\
    & = \sum_{k = 0}^T2 (2\eta^2 L^2)^{k} \InNorms{w_0-z_0}^2 +  \sum_{k = 0}^T \InParentheses{ \sum_{t = 0}^{T-k} 2 (2\eta^2 L^2)^{t}} \cdot \InNorms{z_{k}- w_{k+1}}^2 \\
    & \le \frac{2}{1- 2 \eta^2 L^2} \InParentheses{ \InNorms{w_0-z_0}^2 + \sum_{k = 0}^T \InNorms{z_{k}- w_{k+1}}^2}.
\end{align*}
This completes the proof of \Cref{eq: best-induction-2}.
\end{proof}

Back to the proof of Lemma~\ref{lem:OGDA Best-iterate}.
For all $k \ge 0$, we have
\begin{align}
    \InNorms{z_{k+1} - z^*}^2 &= \InNorms{z_{k+1} - z_k + z_k - z^*}^2 \notag \\
    &= \InNorms{z_k-z^*}^2 + \InNorms{z_{k+1} - z_k}^2  + 2\InAngles{z_{k+1} - z_k, z_k - z^*} \notag\\
    &= \InNorms{z_k-z^*}^2 - \InNorms{z_{k+1} - z_k}^2  + 2\InAngles{z_{k+1} - z_k, z_{k+1} - z^*}\notag\\
    & \le \InNorms{z_k-z^*}^2 - \InNorms{z_{k+1} - z_k}^2  - 2\eta\InAngles{F(w_{k+1}), z_{k+1} - z^*}.\label{eq:OGDA-1}
\end{align}
The last inequality follows from $\InAngles{z_{k+1} - z_k + \eta F(w_{k+1}), z_{k+1}-z^*} \le 0$ as $z_{k+1}  = \Pi_\Z\left[z_k-\eta F(w_{k+1})\right]$.

Similarly, for all $k \ge 0$, we have 
\begin{align}
    \InNorms{z_{k+1} - w_{k+1}}^2 &= \InNorms{z_{k+1} - z_k + z_k - w_{k+1}}^2 \notag\\
    &= \InNorms{z_{k+1} - z_k}^2 + \InNorms{z_k -w_{k+1}}^2 + 2\InAngles{z_k -w_{k+1}, z_{k+1} - z_k}\notag\\
    &= \InNorms{z_{k+1} - z_k}^2 - \InNorms{z_k -w_{k+1}}^2 + 2\InAngles{z_k -w_{k+1}, z_{k+1} - w_{k+1}}\notag\\
    & \le \InNorms{z_{k+1} - z_k}^2 - \InNorms{z_k -w_{k+1}}^2  + 2\eta\InAngles{F(w_k), z_{k+1} - w_{k+1}}.\label{eq:OGDA-2}
\end{align}
The last inequality follows from $\InAngles{z_k - \eta F(w_k) - w_{k+1}, z_{k+1}-w_{k+1}} \le 0$ as $w_{k+1} = \Pi_\Z\left[z_k-\eta F(w_k)\right]$.

\notshow{
\begin{fact}\label{fact:monotonicity implication}
  For all $z\in \Z$, $\langle F(z), z^*-z\rangle\leq 0$.
\end{fact}
\begin{proof}
\begin{align*}
     0 &\leq \langle F(z^*)- F(z), z^*-z\rangle \qquad &(\text{monotonicity of $F(\cdot)$})\\
     &=  \langle F(z^*), z^*-z\rangle -  \langle F(z), z^*-z\rangle &\\
     & \leq - \langle F(z), z^*-z\rangle. \qquad &(\text{optimality of $z^*$ and $z\in \Z$})
 \end{align*}
\end{proof}
}
We can further simplify \Cref{eq:OGDA-1} using \Cref{fact:monotonicity implication}:
\begin{align}
    \InNorms{z_{k+1} - z^*}^2 &\le \InNorms{z_k-z^*}^2 - \InNorms{z_{k+1} - z_k}^2  - 2\eta\InAngles{F(w_{k+1}), z_{k+1} - z^*}\notag\\
    & = \InNorms{z_k-z^*}^2 - \InNorms{z_{k+1} - z_k}^2 - 2\eta\InAngles{F(w_{k+1}), z_{k+1} - w_{k+1}} + 2\eta\InAngles{F(w_{k+1}), z^* - w_{k+1}} \notag \\
    &\le \InNorms{z_k-z^*}^2 - \InNorms{z_{k+1} - z_k}^2 - 2\eta\InAngles{F(w_{k+1}), z_{k+1} - w_{k+1}}. \label{eq:OGDA-3}
\end{align}

Summing \Cref{eq:OGDA-2} and \Cref{eq:OGDA-3}, we get
\begin{align}
    \InNorms{z_{k+1} - z^*}^2 &\le \InNorms{z_k - z^*}^2 - \InNorms{z_k - w_{k+1}}^2 - \InNorms{z_{k+1} - w_{k+1}}^2 + 2\eta \InAngles{F(w_k) - F(w_{k+1}), z_{k+1} - w_{k+1}}\notag\\
    & \le \InNorms{z_k - z^*}^2 - \InNorms{z_k - w_{k+1}}^2 - \InNorms{z_{k+1} - w_{k+1}}^2 + 2\eta \InNorms{F(w_k) - F(w_{k+1})}\InNorms{z_{k+1} - w_{k+1}} \notag\\
    & \le \InNorms{z_k - z^*}^2 - \InNorms{z_k - w_{k+1}}^2 - \InNorms{z_{k+1} - w_{k+1}}^2 + 2\eta L \InNorms{w_k - w_{k+1}}\InNorms{z_{k+1} - w_{k+1}} \notag \\
    & \le \InNorms{z_k - z^*}^2 - \InNorms{z_k - w_{k+1}}^2 + \eta^2 L^2 \InNorms{w_k - w_{k+1}}^2, \label{eq:OGDA-4}
\end{align}
where we use Cauchy-Schwarz inequality in the second inequality and $L$-Lipschitzness of $F(\cdot)$ in the third inequality. In the last inequality, we optimize the quadratic function in $\InNorms{z_{k+1}-w_{k+1}}$.

Summing \Cref{eq:OGDA-4} for $k = 0, 1, \cdots, T$ and using \Cref{Lem: best-induction}, we get 
\begin{align}
    \InNorms{z_{T+1}-z^*}^2 &\le \InNorms{z_0 - z^*}^2 - \sum_{k=0}^T \InNorms{z_k - w_{k+1}}^2 + \eta^2 L^2 \sum_{k=0}^T \InNorms{w_k-w_{k+1}}^2  \notag\\ 
    &\le \InNorms{z_0 - z^*}^2 - \sum_{k=0}^T \InNorms{z_k - w_{k+1}}^2 + \frac{2 \eta^2 L^2}{1 - 2\eta^2 L^2} \InParentheses{\InNorms{w_0 - z_0}^2 +  \sum_{k=0}^T  \InNorms{z_k - w_{k+1}}^2 } \tag{\Cref{Lem: best-induction}}\\
    &= \InNorms{z_0 - z^*}^2 - \frac{1 - 4\eta^2 L^2 }{1 - 2\eta^2 L^2} \sum_{k=0}^T  \InNorms{z_k - w_{k+1}}^2 + \frac{2 \eta^2 L^2}{1 - 2\eta^2 L^2} \InNorms{w_0 - z_0}^2  . \notag 
\end{align}
Since $\eta^2 L^2 < \frac{1}{4}$, we complete the proof by rearranging the above inequality.
\end{prevproof}

\subsection{Best-Iterate of $\Phi_k$}\label{sec:best to potential}
In this section,
we use Lemma~\ref{lem:OGDA Best-iterate} to show that there exists $t^*\in[T]$ such that $\Phi_{t^*}=O\InParentheses{\frac{1}{T}}$.


\begin{lemma}\label{lem:OGDA potential Best-iterate}
Let $\Z \subseteq \R^n$ be a closed convex set, $F: \Z \rightarrow \R$ be a monotone and $L$-Lipschitz operator, and $z^*$ be a saddle point. Let $z_0, w_0 \in \Z$ be arbitrary starting point and $\{z_k,w_k\}_{k\ge 0}$ be the iterates of the OGDA algorithm with any step size $\eta \in  (0,\frac{1}{2L})$. Then for all $T \ge 1$, 

$$ \sum_{k=1}^{T} \InParentheses{ \InNorms{\eta F(z_k) - \eta F(w_k)}^2 + \eta^2\Ham(z_k)^2} \le \frac{ 4 + 6\eta^4 L^4 }{1 - 4\eta^2 L^2} \InNorms{z_0-z^*}^2 +    
\frac{16\eta^2L^2 + 6\eta^4 L^4}{1-4\eta^2 L^2} \InNorms{w_0-z_0}^2
.$$
Moreover, when $w_0 = z_0$
$$ \sum_{k=1}^{T} \InParentheses{ \InNorms{\eta F(z_k) - \eta F(w_k)}^2 + \eta^2\Ham(z_k)^2} \le \frac{ 4 + 6\eta^4 L^4 }{1 - 4\eta^2 L^2} \InNorms{z_0-z^*}^2.$$
\end{lemma}

\notshow{
We first provide the following upper bound of the squared tangent residual.
\begin{lemma}\label{lemma:upperbound hamiltonian}
For all $k \ge 1$, $\Ham(z_k)^2 \le \InNorms{\frac{z_{k-1}-z_k}{\eta} + F(z_k) - F(w_k)}^2$.
\end{lemma}
\begin{proof}
Fix any $k \ge 1$. By the update rule of OGDA, we know $z_k = \Pi_\Z\left[z_{k-1} - F(w_k) \right]$. If $z_k = z_{k-1} - \eta F(w_k)$, then \Cref{lemma:upperbound hamiltonian} holds since 
\begin{align*}
    \Ham(z_k)^2 \le \InNorms{F(z_k)}^2 = \InNorms{F(w_k) + F(z_k) - F(w_k)}^2 = \InNorms{\frac{z_{k-1}-z_k}{\eta} + F(z_k) - F(w_k)}^2.
\end{align*}
For the rest of the proof, we assume that $z_k \ne z_{k-1}-\eta F(w_k)$. 
Since $z_{k-1} - \eta F(w_k) - z_k \in \normal(z_k)$, we have 
\begin{align*}
    \InAngles{z_{k-1} -\eta F(w_k) - z_k, z - z_k} \le 0, \quad \forall z \in \Z.
\end{align*}
Define $a := \frac{z_{k-1} -\eta F(w_k) - z_k}{\InNorms{z_{k-1} -\eta F(w_k) - z_k}}$. We thus know $a \in \unitnormal(z_k)$. Let 
$$a_\perp :=
\begin{cases}
\frac{F(z_{k})-  \langle a, F(z_{k})\rangle \cdot a}{\InNorms{F(z_{k})-  \langle a, F(z_{k})\rangle \cdot a}} \qquad & \text{if $\InNorms{F(z_{k})-  \langle a, F(z_{k})\rangle \cdot a}\neq 0$,}\\
\InParentheses{0,\ldots,0} & \text{otherwise.}
\end{cases}
$$
Observe that $\langle a, a\rangle = 1$, $\langle a_\perp, a\rangle = 0$ and $F(z_{k}) =\langle a, F(z_{k})\rangle a + \langle a_\perp, F(z_{k})\rangle a_\perp $. Thus
\begin{align}
&0=\langle a_\perp, a\rangle =\langle a_\perp, z_{k-1} -  \eta F(w_k) -z_k\rangle\nonumber \\
\Leftrightarrow &\langle a_\perp, F(w_k)\rangle = \frac{\langle a_\perp, z_{k-1} -z_k\rangle}{\eta}.
\label{eq:penpedicular part OGDA}
\end{align}
Moreover, the fact that
$\InAngles{a, z_{k} - z_{k-1} + \eta F(w_k)} \le 0$ implies $ \InAngles{a, F(w_k)} \le \frac{\InAngles{a, z_{k-1} - z_k}}{\eta}
$, which further implies that
 \begin{align}
&\InAngles{a, F(z_k)} = \InAngles{a, F(w_k)+ F(z_k) - F(w_k)} \leq  \InAngles{a, \frac{z_{k-1}-z_{k}}{\eta} + F(z_k) - F(w_k)}. 
\label{eq:normal part OGDA}
\end{align}
Combining the definition of $\Ham(z_k)$, Equation~\eqref{eq:penpedicular part OGDA}, and Equation~\eqref{eq:normal part OGDA} we have 
\begin{align*}
    \Ham(z_k)^2 &\le ||F(z_{k})||^2 - \InAngles{a, F(z_{k})}^2\cdot \mathbbm{1}[\InAngles{a, F(z_{k})} \leq 0] \\
    & = \InAngles{a_\perp, F(z_{k})}^2 + \InAngles{a, F(z_k)}^2\cdot \mathbbm{1}[\InAngles{a, F(z_k)} > 0] \\
    & \le \InAngles{a_\perp, \frac{z_{k-1} - z_k}{\eta} + F(z_k) - F(w_k) }^2 + \InAngles{a, \frac{z_{k-1} - z_k}{\eta} + F(z_k) - F(w_k) }^2 \\
    & \leq \InNorms{\frac{z_{k-1} - z_k}{\eta} + F(z_k) - F(w_k)}^2. \qedhere
\end{align*}
\end{proof}
}
\begin{prevproof}{Lemma}{lem:OGDA potential Best-iterate}
For all $k\ge 1$, we have
\begin{align}
    \InNorms{\eta F(z_k) - \eta F(w_k)}^2 &\le \eta^2 L^2 \InNorms{z_k - w_k}^2 \tag{$L$-Lipschitzness of $F$} \\
    & \le \eta^4 L^4 \InNorms{w_{k-1} - w_k}^2 \tag{\Cref{eq:OGDA-9}}.
\end{align}
Using \Cref{lem:upper bound tangent residual} with the fact that $z_k = \Pi_\Z [z_{k-1} - \eta F(w_k)]$, we have for all $k \ge 1$,
\begin{align}
    \eta^2\Ham(z_k)^2 
    &\le \InNorms{z_{k-1} - z_k + \eta F(z_k) - \eta F(w_k)}^2 \notag\\
    &\le 2\InNorms{z_{k-1} - z_k}^2 +  2\eta^2\InNorms{F(z_k) - F(w_k)}^2 \notag \\
    &\le 2\InNorms{z_{k-1} - w_k + w_k -  z_k}^2 +  2\eta^2 L^2 \InNorms{w_k - z_k}^2 \tag{$L$-Lipschitzness of $F$}\\
    &\le 4\InNorms{z_{k-1} - w_k}^2 + (4 + 2\eta^2 L^2)\InNorms{w_k -  z_k}^2 \notag \\
    &\le 4\InNorms{z_{k-1} - w_k}^2 + (4 + 2\eta^2 L^2)\eta^2 L^2 \InNorms{w_{k-1}-w_k}^2 \tag{\Cref{eq:OGDA-9}}.
\end{align}

Summing the above inequalities with $k = 1, \cdots, T$ and using \Cref{lem:OGDA Best-iterate} and \Cref{Lem: best-induction}, we have
\begin{align*}
    &\sum_{k=1}^{T}\InParentheses{\InNorms{\eta F(z_k) - \eta F(w_k)}^2 + \eta^2\Ham(z_k)^2  }\\
    &\le
    4\sum_{k=0}^{T-1}{\InNorms{z_{k} - w_{k+1}}^2} + (4 + 3\eta^2 L^2 ) \eta^2 L^2 \sum_{k=0}^{T-1}\InNorms{w_{k}-w_{k+1}}^2 \\
    &\leq  \frac{2(4 + 3\eta^2 L^2 ) \eta^2 L^2}{1 - 2\eta^2 L^2} \InNorms{w_0 - z_0}^2
    +
    \InParentheses{4+\frac{2(4 + 3\eta^2 L^2 ) \eta^2 L^2}{1 - 2\eta^2 L^2}} \sum_{k=0}^{T-1} \InNorms{z_k - w_{k+1}}^2 \\
    &\leq \frac{2(4 + 3\eta^2 L^2 ) \eta^2 L^2}{1 - 2\eta^2 L^2} \InNorms{w_0 - z_0}^2
    +\InParentheses{\frac{8\eta^2 L^2}{1 - 4\eta^2 L^2} + \frac{4(4 + 3\eta^2 L^2 ) \eta^4 L^4}{\InParentheses{1 - 2\eta^2 L^2}\cdot \InParentheses{1 - 4\eta^2 L^2}}} \InNorms{w_0-z_0}^2 \\
    &\quad  +\InParentheses{\frac{4-8\eta^2 L^2}{1- 4\eta^2 L^2} + \frac{2(4 + 3\eta^2 L^2 ) \eta^2 L^2}{1 - 4\eta^2 L^2}} \InNorms{z_0-z^*}^2
    \\
    &= \frac{16\eta^2L^2 + 6\eta^4 L^4}{1-4\eta^2 L^2} \InNorms{w_0-z_0}^2 +\frac{ 4 + 6\eta^4 L^4 }{1 - 4\eta^2 L^2} \InNorms{z_0-z^*}^2,
\end{align*}
which concludes the proof.
\end{prevproof}

\begin{corollary}\label{cor:OGDA potential Best-iterate}
Let $\Z \subseteq \R^n$ be a closed convex set, $F: \Z \rightarrow \R$ be a monotone and $L$-Lipschitz operator, and $z^*$ be a saddle point. Let $z_0, w_0 \in \Z$ be arbitrary starting point and $\{z_k,w_k\}_{k\ge 0}$ be the iterates of the OGDA algorithm with any step size $\eta \in  (0,\frac{1}{2L})$. Then for all $T \ge 1$, there exists $t^*\in [T]$ such that 
$$   \InNorms{\eta F(z_{t^*}) - \eta F(w_{t^*})}^2 + \eta^2\Ham(z_{t^*})^2 \le \frac{1}{T}\frac{ 4 + 6\eta^4 L^4 }{1 - 4\eta^2 L^2} \InNorms{z_0-z^*}^2 +    \frac{1}{T}\frac{16\eta^2 L^2 + 6\eta^4 L^4}{1-4\eta^2 L^2}\InNorms{w_0 - z_0}^2.$$
Moreover, when $w_0 = z_0$
$$   \InNorms{\eta F(z_{t^*}) - \eta F(w_{t^*})}^2 + \eta^2\Ham(z_{t^*})^2 \le \frac{1}{T}\frac{ 4 + 6\eta^4 L^4 }{1 - 4\eta^2 L^2} \InNorms{z_0-z^*}^2.$$
\end{corollary}

\subsection{Monotonicity of the Potential}\label{sec:OGDA monotone}

In this section we show that the potential function $\Phi_k$ is monotonically decreasing across iterates of OGDA. We only include the simplified proof discovered using a degree 2 SOS program. The original proof is based on a higher degree SOS program and can be found in an earlier version of this paper \citep{v2-last-iterate} and at this \href{https://arxiv.org/abs/2204.09228v2}{link}.

\begin{theorem}\label{thm:monotone potential at constrained}
Let $\Z \subseteq \R^n$ be a closed convex set and $F: \Z \rightarrow \R$ be a monotone and $L$-Lipschitz operator. Then for any $z_k, w_k \in \Z$, the OGDA algorithm with any step size $\eta \in  (0,\frac{1}{2L})$ produces $w_{k+1}, z_{k+1} \in \Z$ that satisfy $\InNorms{F(z_k) - F(w_k)}^2+ \Ham(z_k)^2 \ge \InNorms{F(z_{k+1}) - F(w_{k+1})}^2+ \Ham(z_{k+1})^2$. 
\end{theorem}

\begin{proof}
Let $c_k = \Pi_{\normal_\Z(z_k)}(-F(z_k))$ and  $c_{k+1} = \Pi_{\normal_\Z(z_{k+1})}(-F(z_{k+1}))$.
\Cref{lemma:property of tangent residual} implies that
\begin{align}
    &\eta^2 r^{tan}(z_k)^2 + \eta^2 \InNorms{F(z_k) - F(w_k)}^2 - \InParentheses{\eta^2 r^{tan}(z_{k+1})^2+\InNorms{F(z_{k+1}) - F(w_{k+1})}^2} \notag\\
    &=  \|\eta F(z_k)+\eta c_k\|^2 + \eta^2 \InNorms{F(z_k) - F(w_k)}^2 \notag\\
    &\quad- \InParentheses{\|\eta F(z_{k+1})+\eta c_{k+1}\|^2+\InNorms{F(z_{k+1}) - F(w_{k+1})}^2 }\label{eq:target OGDA}
\end{align}

Since $F$ is monotone and $L$-Lipschitz, and $\eta \in (0,\frac{1}{2L})$, we have 
\begin{align}
    (-2)\cdot \InParentheses{\InAngles{\eta F(z_{k+1}) - \eta F(z_k), z_{k+1} - z_k}} &\le 0, \label{eq:cons-mon} \\
    (-2)\cdot \InParentheses{\frac{1}{4} \InNorms{z_{k+1} - w_{k+1}}^2- \InNorms{\eta F(z_{k+1}) - \eta F(w_{k+1})}^2} & \le 0.\label{eq:cons-lip}
\end{align}
Since $w_{k+1} = \Pi_\Z \inteval{z_k - \eta F(w_k)}$ and $z_{k+1} = \Pi_\Z \inteval{z_k - \eta F(w_{k+1})}$,
we have that $z_k - \eta F(w_k) - w_{k+1} \in N(w_{k+1})$ and $z_k - \eta F(w_{k+1}) - z_{k+1} \in N(z_{k+1})$. Thus we have
\begin{align}
    (-1)\cdot\InAngles{z_k - \eta F(w_k) - w_{k+1}, w_{k+1} - z_{k+1}} & \le 0, \label{eq:cons-proj1} \\
    (-2)\cdot \InAngles{z_k - \eta F(w_{k+1}) - z_{k+1}, z_{k+1}-z_k} & \le 0 \label{eq:cons-proj2}.
\end{align}
Since $c(z_k) \in N(z_k)$, we have that
\begin{align}
    (-1)\cdot \InAngles{\eta c(z_k), z_k - w_{k+1}} & \le 0, \label{eq:cons-ak-1} \\
    (-1)\cdot \InAngles{\eta c(z_k), z_k - z_{k+1}} & \le 0. \label{eq:cons-ak-2}
\end{align}
According to \Cref{lemma:property of tangent residual} and the fact that $z_k - \eta F(w_{k+1}) - z_{k+1} \in N(z_{k+1})$, $c_{k+1}\in \Pi_{\normal(z_{k+1})}\InParentheses{-F(z_{k+1})}$ we have 

\begin{align}
    \InParentheses{-2}\cdot \InAngles{\eta c(z_{k+1}) + \eta F(z_{k+1}), z_k - \eta F(w_{k+1}) - z_{k+1} } \le 0, \label{eq:cons-ak+1}\\
    (-2)\cdot \InAngles{\eta c(z_{k+1}) + \eta F(z_{k+1}), -c(z_{k+1})} = 0,\label{eq:cons-ak-3}.
\end{align}

\textsc{Matlab} code for the verification of the following identity can be found at 
this \href{https://raw.githubusercontent.com/aroikonomou/code-verification-OGDA/main/ogda_constrained_identity_verification.m}{link}.

\begin{align}
    &\text{Expression}~\eqref{eq:target OGDA} + \LHSI~\eqref{eq:cons-mon} +  \LHSI~\eqref{eq:cons-lip} + \LHSI~\eqref{eq:cons-proj1} \notag\\
    & \quad + \LHSI~\eqref{eq:cons-proj2} + \LHSI~\eqref{eq:cons-ak-1} + \LHSI~\eqref{eq:cons-ak-2} \notag\\
    & \quad + \LHSI~\eqref{eq:cons-ak-3} + \LHSI~\eqref{eq:cons-ak+1}\notag \\
    & = \InNorms{\frac{w_{k+1}-z_{k+1}}{2} + \eta F(w_{k})  - \eta F(z_k)}^2  \\
    &+ \InNorms{\eta F(z_k) + \eta c(z_k) - z_k + \frac{w_{k+1}+z_{k+1}}{2}}^2 \\
    &+ \InNorms{z_k - \eta F(w_{k+1})- z_{k+1} - \eta c(z_{k+1})}^2 \\
    &\ge 0\notag.
\end{align}
Thus, $\InNorms{F(z_k) - F(w_k)}^2+ \Ham(z_k)^2 \ge \InNorms{F(z_{k+1}) - F(w_{k+1})}^2+ \Ham(z_{k+1})^2$.
\end{proof}

\subsection{Combining Everything}\label{sec:combine OGDA}

In this section,
we combine the results of the previous sections and show that $\Phi_T=O\left(\frac{1}{T}\right)$ and we show the last-iterate convergence rate for performance measures of iterest.

\begin{lemma}\label{lem:OGDA relate w and z}
Let $\Z \subseteq \R^n$ be a closed convex set and $F: \Z \rightarrow \R$ be a monotone and $L$-Lipschitz operator. Let $z_0,w_0 \in \Z$ be arbitrary starting point and $\{z_k,w_k\}_{k\ge 0}$ be the iterates of the OGDA algorithm with any step size $\eta \in  (0,\frac{1}{2L})$. Then for any $k \ge 0$, 
\begin{align*}
    r^{tan}_{(F,\Z)}(w_{k+1}) \le \sqrt{2}(2+\eta L) \sqrt{r^{tan}_{(F,\Z)}(z_k)^2 + \InNorms{F(w_k) - F(z_k)}^2}.
\end{align*}
\end{lemma}
\begin{proof}
Since $w_{k+1} = \Pi_\Z [z_k - F(w_k)]$, by using \Cref{lem:upper bound tangent residual} we have
\begin{align*}
    r^{tan}_{(F,\Z)}(w_{k+1}) &\le \InNorms{\frac{z_k - w_{k+1}}{\eta} + F(w_{k+1}) - F(w_{k})} \\
    & \le \InNorms{\frac{z_k - w_{k+1}}{\eta}} + \InNorms{F(w_k) - F(z_k)} + \InNorms{F(z_k) - F(w_{k+1})} \\
    & \le \frac{1+\eta L}{\eta }\InNorms{z_k - w_{k+1}} + \InNorms{F(w_k) - F(z_k)}. \tag{$L$-Lipschitzness of $F$}
\end{align*}
Using \Cref{lem:projected hamiltonian dominates natural residue} and the non-expansiveness of the projection mapping, we have
\begin{align*}
    \InNorms{z_k - w_{k+1}} &\le \InNorms{z_k - \Pi_\Z[z_k - \eta F(z_k)]} + \InNorms{\Pi_\Z[z_k - \eta F(z_k)] - w_{k+1}} \\
    &= r^{nat}_{(\eta F,\Z)}(z_k) + \InNorms{\Pi_\Z[z_k - \eta F(z_k)] - \Pi_\Z[z_k - \eta F(w_k)]} \\
    & \le r^{tan}_{(\eta F,\Z)}(z_k) +\eta \InNorms{F(z_k) - F(w_k)} \\
    & = \eta r^{tan}_{(F,\Z)}(z_k) +\eta \InNorms{F(z_k) - F(w_k)}.
\end{align*}
Combing the above two inequalities, we have 
\begin{align*}
    r^{tan}_{(F,\Z)}(w_{k+1}) &\le (1+\eta L) r^{tan}_{(F,\Z)}(z_k) + (2+\eta L) \InNorms{F(w_k) - F(z_k)} \\
    & \le \sqrt{2}(2+\eta L) \sqrt{r^{tan}_{(F,\Z)}(z_k)^2 + \InNorms{F(w_k) - F(z_k)}^2}. \tag{$a+b \le \sqrt{2} \sqrt{a^2+b^2}$}
\end{align*}
\end{proof}

Combining \Cref{cor:OGDA potential Best-iterate}, \Cref{thm:monotone potential at constrained}, \Cref{lem:OGDA relate w and z}, \Cref{lem:projected hamiltonian dominates natural residue} and \Cref{lem:hamiltonian to gap} we get  $\mathcal{O}(\frac{1}{\sqrt{T}})$ last-iterate convergence in terms of the tangent residual, natural residual and gap function for both $z_T$ and $w_{T+1}$. 
The result is formally stated in \Cref{thm: OGDA last-iterate at constrained}.

\begin{theorem}\label{thm: OGDA last-iterate at constrained}
Let $\Z \subseteq \R^n$ be a closed convex set and $F: \Z \rightarrow \R$ be a monotone and $L$-Lipschitz operator. Let $z_0,w_0 \in \Z$ be arbitrary starting point and $\{z_k,w_k\}_{k\ge 0}$ be the iterates of the OGDA algorithm with any step size $\eta \in  (0,\frac{1}{2L})$. 
Let $D_0 := \sqrt{\InParentheses{4+ 6\eta^4 L^4}  \InNorms{z_0 - z^*}^2 +  \InParentheses{16\eta^2 L^2 + 6\eta^4 L^4} \InNorms{w_0 - z_0}^2} = O(\max\{\InNorms{z_0 - z^*}, \InNorms{w_0 - z_0}\})$.
Then for any $T \ge 1$, 
\begin{itemize}
    \item $\gap_{\Z,F,D}(z_{T}) \le \frac{1}{\sqrt{T}}\cdot \frac{DD_0 }{ \eta  \sqrt{ 1-4\cdot (\eta L)^2}}$.
    \item $r_{\Z,F,D}^{nat}(z_{T}) \le r_{\Z,F,D}^{tan}(z_T) \le \frac{1}{\sqrt{T}}\cdot \frac{D_0 }{ \eta \cdot \sqrt{ 1-4\cdot (\eta L)^2)}} $.
    \item $\gap_{\Z,F,D}(w_{T+1}) \le \frac{1}{\sqrt{T}}\cdot \frac{\sqrt{2}(2+\eta L)\cdot D\cdot D_0 }{ \eta \cdot \sqrt{ 1-4\cdot (\eta L)^2}}$.
    \item $r_{\Z,F,D}^{nat}(w_{T+1}) \le r_{\Z,F,D}^{tan}(w_{T+1}) \le \frac{1}{\sqrt{T}}\cdot \frac{\sqrt{2}(2+\eta L) D_0 }{ \eta \cdot \sqrt{ 1-4\cdot (\eta L)^2}} $.
\end{itemize}
\notshow{
\begin{align*}
    &\gap_{\Z,F,D}(z_T) \le   \frac{DL}{\sqrt{ T}} \sqrt{\frac{4+ 6\eta^4 L^4}{\eta^2 L^2 (1-4\eta^2 L^2)}  \InNorms{z_0 - z^*}^2 +  \weiqiangnote{ \frac{6\eta^2 L^2 + 16\eta^4 L^4}{\eta^2 L^2(1- 4\eta^2L^2)} \InNorms{w_0 - z_0}^2}},\\
    &r_{\Z,F,D}^{nat}(z_T) \le r_{\Z,F,D}^{tan}(z_T) \le \frac{L}{\sqrt{ T}} \sqrt{\frac{4+ 6\eta^4 L^4}{\eta^2 L^2 (1-4\eta^2 L^2)}  \InNorms{z_0 - z^*}^2 +  \weiqiangnote{ \frac{6\eta^2 L^2 + 16\eta^4 L^4}{\eta^2 L^2(1- 4\eta^2L^2)} \InNorms{w_0 - z_0}^2}},\\
    &\gap_{\Z,F,D}(w_{T+1}) \le 2(2+\eta L) \frac{DL}{\sqrt{T}} \sqrt{\frac{4+ 6\eta^4 L^4}{\eta^2 L^2 (1-4\eta^2 L^2)}  \InNorms{z_0 - z^*}^2 + \weiqiangnote{ \frac{6\eta^2 L^2 + 16\eta^4 L^4}{\eta^2 L^2(1- 4\eta^2L^2)} \InNorms{w_0 - z_0}^2}},\\
    &r_{\Z,F,D}^{nat}(w_{T+1}) \le r_{\Z,F,D}^{tan}(w_{T+1}) \le 2(2+\eta L) \frac{L}{\sqrt{T}} \sqrt{\frac{4+ 6\eta^4 L^4}{\eta^2 L^2 (1-4\eta^2 L^2)}  \InNorms{z_0 - z^*}^2 + \weiqiangnote{ \frac{6\eta^2 L^2 + 16\eta^4 L^4}{\eta^2 L^2(1- 4\eta^2L^2)} \InNorms{w_0 - z_0}^2}}
\end{align*}
}
\end{theorem}

\notshow{
\begin{theorem}\label{thm: OGDA last-iterate at constrained}
Let $\Z \subseteq \R^n$ be a closed convex set and $F: \Z \rightarrow \R$ be a monotone and $L$-Lipschitz operator. Let $z_0,w_0 \in \Z$ be arbitrary starting point and $\{z_k,w_k\}_{k\ge 0}$ be the iterates of the OGDA algorithm with any step size $\eta \in  (0,\frac{1}{2L})$. Then for any $T \ge 1$, 
\begin{align*}
    &\gap_{\Z,F,D}(z_T) \le   \frac{DL}{\sqrt{ T}} \sqrt{\frac{4+ 6\eta^4 L^4}{\eta^2 L^2 (1-4\eta^2 L^2)}  \InNorms{z_0 - z^*}^2 + \weiqiangnote{ \frac{6\eta^2 L^2 + 16\eta^4 L^4}{\eta^2 L^2(1- 4\eta^2L^2)} \InNorms{w_0 - z_0}^2}}\\
    &\gap_{\Z,F,D}(w_{T+1}) \le 2(2+\eta L) \frac{DL}{\sqrt{T}} \sqrt{\frac{4+ 6\eta^4 L^4}{\eta^2 L^2 (1-4\eta^2 L^2)}  \InNorms{z_0 - z^*}^2 + \weiqiangnote{ \frac{6\eta^2 L^2 + 16\eta^4 L^4}{\eta^2 L^2(1- 4\eta^2L^2)} \InNorms{w_0 - z_0}^2}}.
\end{align*}
\end{theorem}

Setting $D = \max\{ \InNorms{z_0 - z^*}, \InNorms{w_0 - z_0}\}$, and $\eta = \frac{1}{2\sqrt{2}L}$, we have $\gap_{\Z,F,D}(z_T) = O \InParentheses{ \frac{D^2L}{\sqrt{T}}}$, which matches the lower bound $\Omega(\frac{D^2L}{\sqrt{T}})$ by \cite{golowich_tight_2020} up to constant factor.

}

\section{Agnostic to \Cref{lemma:property of tangent residual} Proof for Monotonicity of Tangent Residual of EG }\label{sec:high degree last-iterate EG}

Let $a = \max_{\substack{a\in \unitnormal_\Z(z),\\ \InAngles{F(z),a}\le 0}}\langle a, F(z) $.
By \Cref{def:projected hamiltonian},
a natural formulation of the squared tangent residual is $r^{tan}(z)^2 = \InNorms{F(z)}^2 - \InAngles{a,F(z)}^2 $,
which is a degree-4 formulation of the tangent residual with respect to our $\{a,F(z)\}$.
In this section,
we show how to prove monotonicity of tangent residual of EG with arbitrary convex constraints,
while being agnostic to the degree two formulation of tangent residual as shown in \Cref{lemma:property of tangent residual} with the use of a higher-degree SOS program.

\paragraph{Reducing the Number of Constraints.} 
Our reasoning behind reducing the number of constraints follows similar to the corresponding paragraph in Section~\ref{sec:last-iterate EG} with minor modifications that we list here for completeness.
Suppose we are not given the description of $\Z$, and we only observe one iteration of the EG algorithm. In other words, we know $z_k$, $z_{k+\half}$, and $z_{k+1}$, as well as $F(z_k)$, $F(z_{k+\half})$, and $F(z_{k+1})$. To compute the squared \projham at $z_k$, let us also assume that the unit vector $-a_k\in \unitnormal(z_k)$ satisfies $r^{tan}_{(F,\Z)}(z_k)^2=\InNorms{F(z_k)}^2-\InAngles{F(z_k),-a_k}^2$. From this limited information, what can we learn about $\Z$? We can conclude that $\Z$ must lie in the intersection of the following halfspaces: (a) $\InAngles{a_k,z}\geq b_k$, where $b_k=\InAngles{a_k,z_k}$. This is true because $-a_k\in \unitnormal(z_k)$. (b) $\InAngles{a_{k+\half},z}\geq b_{k+\half}$, where $a_{k+\half}=\frac{z_{k+\half}-z_k+\eta F(z_k)}{\InNorms{z_{k+\half}-z_k+\eta F(z_k)}}$ and $b_{k+\half}=\InAngles{a_{k+\half},z_{k+\half}}$. This is true because $z_{k+\half}=\Pi_\Z(z_k-\eta F(z_k))$, so $\InAngles{z_{k+\half}-z_k+\eta F(z_k), z-z_{k+\half}}\geq 0$ for all $z\in \Z$. (c) $\InAngles{a_{k+1},z}\geq b_{k+1}$, where $a_{k+1}=\frac{z_{k+1}-z_k+\eta F(z_{k+\half})}{\InNorms{z_{k+1}-z_k+\eta F(z_{k+\half})}}$ and $b_{k+1}=\InAngles{a_{k+1},z_{k+1}}$. This is true because $z_{k+1}=\Pi_\Z(z_k-\eta F(z_{k+\half}))$, so $\InAngles{z_{k+1}-z_k+\eta F(z_{k+\half}), z-z_{k+1}}\geq 0$ for all $z\in \Z$.

The ``hardest instance'' of $\Z$ that is consistent with our knowledge of $z_k$, $z_{k+\half}$, and $z_{k+1}$ is when  $\Z$ is exactly the intersection of these three halfspaces. In such case, the squared \projham of $z_{k+1}$ is $\InNorms{F(z_{k+1})}^2-\InAngles{F(z_{k+1}),a_{k+1}}^2\cdot \ind[\InAngles{F(z_{k+1}),a_{k+1}}\geq 0]$, and it is an upper bound of $r^{tan}_{(F,\Z)}(z_{k+1})^2$ for any other consistent $\Z$. Our goal is to prove the \projham is non-increasing even in the "hardest case", that is, to prove the non-negativity of 
\vspace{-.1in}
\begin{align}\label{eq:hardest instance high degree}
    \InNorms{F(z_k)}^2-\InAngles{F(z_k),a_k}^2-\left(\InNorms{F(z_{k+1})}^2-\InAngles{F(z_{k+1}),a_{k+1}}^2\cdot \ind[\InAngles{F(z_{k+1}),a_{k+1}}\geq 0]\right)
\end{align}

\vspace{-.2in}
\paragraph{Low-dimensionality of an EG Update.} As there are only three hyperplanes $\InAngles{a_i,z}\geq b_i$ for $i\in\{k,k+\half,k+1\}$ involved, we can choose a new basis, so that $a_{k+1}=(1,0,\ldots, 0)$, $a_{k+\half}=(\theta_1,\theta_2,0,\ldots, 0)$, and $a_{k}=(\sigma_1,\sigma_2,\sigma_3,0,\ldots, 0)$. As $a_{k+\half}$ and $z_{k+\half}-z_k+\eta F(z_k)$ are co-directed, and $a_{k+1}$ and $z_{k+1}-z_k+\eta F(z_{k+\half})$ are co-directed, \emph{an important property of this change of basis is that the EG update from $z_k$ to $z_{k+1}$ is unconstrained in all coordinates $\ell\geq 4$.} More specifically,
\vspace{-.1in}
$$z_{k+\frac{1}{2}}[\ell]- z_k[\ell]+\eta F(z_{k})[\ell]=0,\quad z_{k+1}[\ell] - z_k[\ell] + \eta F(z_{k+\frac{1}{2}})[\ell]=0,~\forall \ell\geq 4.$$ 

\vspace{-.1in}
 Hence, we can represent all of the coordinates $\ell\geq 4$ with one coordinate in the SOS program similar to the unconstrained case. We still need to keep the first three dimensions, but now we only face a problem in dimension $4$ rather than in  dimension $n$, and we can form a constant size SOS program to search for a certificate of non-negativity for Expression~\eqref{eq:hardest instance high degree}.

In Lemma~\ref{lem:reduce to cone}, we further simplify the instance that we need to consider. In particular, we argue that it is w.l.o.g. to assume that (1) $a_{k+1}$, $a_{k+\half}$, and $a_{k}$ are linear independent and (2) the intersection of the three halfspaces forms a cone, i.e., $b_k=b_{k+\half}=b_{k+1}=0$. Both assumption (1) and (2) reduce the number of variables we need to consider in the SOS program, so a low degree SOS proof is more likely to exist.
To maximally reduce the number of variables, we only included the minimal number of constraints that suffice to derive an SOS proof.

\vspace{-.1in}
\begin{lemma}[Simplification Procedure]\label{lem:reduce to cone}
Let $\mathcal{I}$ be a variational inequality problem for a closed convex set $\Z\subseteq \R^n$ and a monotone and $L$-Lipschitz operator $F:\Z\rightarrow{\R}^n$.
Suppose the EG algorithm has a constant step size $\eta$. Let $z_k$ be the $k$-th iteration of the EG algorithm, $z_{k+\half}$ be the $(k+\half)$-th iteration as defined in~\eqref{def:k+1/2-th step}, and $z_{k+1}$ be the $(k+1)$-th iteration as defined in~\eqref{def:k+1-th step}.

Then either $r^{tan}_{(F,\Z)}(z_k) \geq r^{tan}_{(F,\Z)}(z_{k+1})$, or there exist vectors $\ba_k,\ba_{k+\half},\ba_{k+1}$,$\bz_k$,$\bz_{k+\half}$,$\bz_{k+1}$,
$\bF(\bz_k)$, $\bF(\bz_{k+\half})$, $\bF(\bz_{k+1})\in \R^{N}$ with $N\leq n+5$ that satisfy the following conditions.
\begin{enumerate}
    \item $\ba_k=(\beta_1,\beta_2,1,0,\ldots,0),\ba_{k+\half}=(\alpha, 1, 0,\ldots,0)$, and $\ba_{k+1}=(1,0,\ldots,0)$ for some $\alpha,\beta_1,\beta_2\in \R$. 
    \item 
$\InNorms{\bF(\bz_k)-\frac{\InAngles{F(\bz_k),\ba_k}\cdot \ba_k}{\InNorms{\ba_k}^2}}^2 
-
\InNorms{
\bF(\bz_{k+1})-\frac{\InAngles{\bF(\bz_{k+1}),\ba_{k+1}}\cdot\ba_{k+1}}{\InNorms{\ba_{k+1}}^2}\mathbbm{1}[\InAngles{\bF(\bz_{k+1}),\ba_{k+1}}\ge 0]}^2<0$.
\item Additionally, $\langle \ba_i,\bz_j \rangle \geq 0$ and $\langle \ba_i,\bz_i \rangle = 0$  for all  $i,j \in\{k,k+\half,k+1\}$.
$\ba_{k+\half}$ and $\bz_{k+\half} -\bz_k +\eta \bF(\bz_k)$ are co-directed, i.e., they are colinear and have the same direction,
and
$\ba_{k+1}$ and $\bz_{k+1}-\bz_k + \eta \bF(\bz_{k+\half})$ are co-directed.
\item \begin{align}
\InNorms{\bF(\bz_{k+1}) - \bF(\bz_{k+\half})}^2 \leq&  L^2\InNorms{\bz_{k+1} - \bz_{k+\half}}^2\label{eq:reduced Lipschitz}\\
\InAngles{\bF(\bz_{k+1}) - \bF(\bz_k),\bz_{k+1}-\bz_k} \geq&0 \label{eq:reduced monotone}\\
\InAngles{\ba_k, \bF(\bz_k) }\geq& 0. \label{eq: reduced gradient plane}
\end{align}
\end{enumerate}
\end{lemma}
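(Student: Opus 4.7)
The plan is to prove this by constructing the promised low-dimensional instance under the assumption that $r^{tan}_{(F,\Z)}(z_k) < r^{tan}_{(F,\Z)}(z_{k+1})$. The key observation is that a single EG step only sees three hyperplanes of $\Z$: the supporting hyperplane at $z_k$ whose inward normal attains the argmax in the tangent residual, and the two hyperplanes arising from the KKT conditions for the projections defining $z_{k+\half}$ and $z_{k+1}$. Replacing $\Z$ by the intersection $\Z'$ of the three corresponding halfspaces makes the problem effectively three-dimensional (in the span of the relevant normals) while only weakening the constraints.

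First, I would pick $a_k$ to be a unit vector with $-a_k \in \unitnormal_\Z(z_k)$ attaining the argmax in the equivalent definition of the tangent residual, so that $r^{tan}_{(F,\Z)}(z_k)^2 = \|F(z_k)\|^2 - \InAngles{F(z_k), a_k}^2$ and $\InAngles{F(z_k), a_k} \ge 0$ (which already yields \eqref{eq: reduced gradient plane}). I then set $a_{k+\half} := z_{k+\half} - z_k + \eta F(z_k)$ and $a_{k+1} := z_{k+1} - z_k + \eta F(z_{k+\half})$; the KKT conditions for the projections in \eqref{def:k+1/2-th step} and \eqref{def:k+1-th step} yield $-a_{k+\half} \in \normal_\Z(z_{k+\half})$ and $-a_{k+1} \in \normal_\Z(z_{k+1})$, and the co-direction clauses in part~(3) hold by construction. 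Defining $\Z' := \{z \in \R^n : \InAngles{a_i, z - z_i} \ge 0 \text{ for } i \in \{k, k+\half, k+1\}\} \supseteq \Z$, the same KKT characterization shows that running EG on $\Z'$ starting at $z_k$ still produces $z_{k+\half}$ and $z_{k+1}$.

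Next, I would verify condition~(2). Since $\Z \subseteq \Z'$ implies $\normal_{\Z'}(z) \subseteq \normal_\Z(z)$, the argmax defining $r^{tan}$ is over a smaller set at $z_{k+1}$, so $r^{tan}_{(F,\Z)}(z_{k+1}) \le r^{tan}_{(F,\Z')}(z_{k+1})$. Because $-a_{k+1}/\|a_{k+1}\| \in \unitnormal_{\Z'}(z_{k+1})$ is feasible in that argmax precisely when $\InAngles{F(z_{k+1}), a_{k+1}} \ge 0$ (and the argmax is at least $0$ otherwise, via the zero vector), one obtains $r^{tan}_{(F,\Z')}(z_{k+1})^2 \le \|F(z_{k+1})\|^2 - \frac{\InAngles{F(z_{k+1}), a_{k+1}}^2}{\|a_{k+1}\|^2} \mathbbm{1}[\InAngles{F(z_{k+1}), a_{k+1}} \ge 0]$, which is exactly the right-hand squared norm of (2). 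Combining this with the choice of $a_k$, which makes the left squared norm of (2) equal $r^{tan}_{(F,\Z)}(z_k)^2$, and with the assumed strict increase $r^{tan}_{(F,\Z)}(z_k) < r^{tan}_{(F,\Z)}(z_{k+1})$, delivers the strict inequality in~(2).

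The last step is to rotate and rescale so that condition~(1) holds. I apply Gram--Schmidt to $(a_{k+1}, a_{k+\half}, a_k)$ to pick orthonormal $e_1, e_2, e_3$ with $a_{k+1} \in \Span(e_1)$, $a_{k+\half} \in \Span(e_1, e_2)$, and $a_k \in \Span(e_1, e_2, e_3)$; rescaling each $a_i$ by a positive constant (legitimate since halfspaces and normal-cone membership are invariant under positive scaling) so that the prescribed coordinate equals $1$ yields the form of~(1). Extending to an orthonormal basis of $\R^N$ containing the six vectors $z_k, z_{k+\half}, z_{k+1}, F(z_k), F(z_{k+\half}), F(z_{k+1})$ gives $N \le n + 5$. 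Translating so that the common apex of the three hyperplanes sits at the origin yields $\InAngles{a_i, z_i} = 0$, while $\InAngles{a_i, z_j} \ge 0$ persists because every $z_j$ lies in $\Z \subseteq \Z'$. Inequalities \eqref{eq:reduced Lipschitz} and \eqref{eq:reduced monotone} are inherited directly from the $L$-Lipschitzness and monotonicity of $F$ under the isometric basis change. The main obstacle will be the degenerate case in which $a_k, a_{k+\half}, a_{k+1}$ fail to be linearly independent and the common apex is not unique; the generality of (1), which permits $\alpha, \beta_1, \beta_2$ to be arbitrary reals (including zero), absorbs these edge cases, and one may select the apex in the linearly independent subspace.
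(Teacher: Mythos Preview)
Your overall architecture---choose $a_k$ as the optimizer of the tangent residual at $z_k$, take $a_{k+\half},a_{k+1}$ as the projection residuals, bound the $z_{k+1}$ side of (2) via $-a_{k+1}/\|a_{k+1}\|\in\unitnormal_{\Z}(z_{k+1})$, and then rotate via Gram--Schmidt on $(a_{k+1},a_{k+\half},a_k)$---matches the paper's proof closely. The translation-to-the-apex device you use to obtain $\langle a_i,z_i\rangle=0$ is a clean alternative to what the paper does for that part.

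The genuine gap is your last paragraph. Your claim that ``the generality of (1), which permits $\alpha,\beta_1,\beta_2$ to be arbitrary reals (including zero), absorbs these edge cases'' is false: for every choice of $\alpha,\beta_1,\beta_2$ the three vectors $(1,0,\ldots)$, $(\alpha,1,0,\ldots)$, $(\beta_1,\beta_2,1,0,\ldots)$ are unit-lower-triangular and hence linearly independent. Thus no rotation/positive rescaling can place $a_k,a_{k+\half},a_{k+1}$ in the form (1) when they are dependent. And dependence is not exotic: if $z_k-\eta F(z_k)\in\Z$ then $a_{k+\half}=0$; if $z_k$ is an interior point then the optimizer $a_k$ may be $0$; two of the $a_i$ can be parallel. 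In these cases your Gram--Schmidt step simply fails, and so does the apex translation (the three hyperplanes need not meet at a single point).

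The paper resolves this not by a case analysis but by a perturbation: it embeds everything into $\R^{n+5}$, appending five auxiliary coordinates carrying a small parameter $\epsilon>0$ (and an auxiliary $\ell$). The extra coordinates are engineered so that the lifted $\ha_k,\ha_{k+\half},\ha_{k+1}$ are \emph{always} linearly independent and satisfy $\langle\ha_i,\hz_i\rangle=0$ exactly, while the lifted Lipschitz and monotonicity inequalities and $\langle\ha_k,\hF(\hz_k)\rangle\ge 0$ hold for all $\epsilon$. One then computes $\lim_{\epsilon\to 0^+}$ of the two sides of (2) and shows they are bounded by $\Ham(z_k)^2$ and $\Ham(z_{k+1})^2$ respectively, so the assumed strict inequality $\Ham(z_k)<\Ham(z_{k+1})$ forces (2) for some small enough $\epsilon>0$. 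This limiting construction is also precisely where the bound $N\le n+5$ comes from; your sentence about ``extending to an orthonormal basis of $\R^N$ containing the six vectors'' does not explain the $+5$.
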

\vspace{-.1in}

\begin{prevproof}{Lemma}{lem:reduce to cone}
For our proof, it will be more convenient to work with terms $\Ham(z_k)^2, \Ham(z_{k+1})^2$ rather than $\Ham(z_k), \Ham(z_{k+1})$.
Since $\Ham(z_k),\Ham(z_{k+1})\geq 0$, then $\Ham(z_k) - \Ham(z_{k+1}) \geq 0$ iff $\Ham(z_k)^2 - \Ham(z_{k+1})^2 \geq 0$.
For the rest of the proof,
we refer to the property that $\ba_k=(\beta_1,\beta_2,1,0,\ldots,0),\ba_{k+\half}=(\alpha, 1, 0,\ldots,0),\ba_{k+1}=(1,0,\ldots,0)$ as the form property. 

Recall that the $k$-th update of EG is as follows
$z_{k+\frac{1}{2}} = \Pi_\Z\left[z_k-\eta F(z_k)\right]$ and $z_{k+1}  = \Pi_\Z\left[z_k-\eta F(z_{k+\frac{1}{2}})\right]$.
We define the following vectors:
\begin{align}
    -a_k &\in \argmin_{\substack{a\in \unitnormal_\Z(z_k),\\ \InAngles{F(z_k),a}\le 0}}
\|F(z_k) - \InAngles{F(z_k),a}\cdot a\|^2, \label{dfn:a_k} \\
    a_{k+\half} &= z_{k+\half} - z_k + \eta F(z_k), \label{dfn:a_k+1/2}\\
    a_{k+1} &= z_{k+1} - z_k + \eta F(z_{k+\half}).\label{dfn:a_k+1}
\end{align}
For now, let us assume that $a_k$, $a_{k+\half}$, and $a_{k+1}$ satisfy  (i) the form property, and (ii) $\InAngles{a_k,z_k}
=\InAngles{a_{k+\half},z_{k+\half}}
=\InAngles{a_{k+1},z_{k+1}}=0$. 
We use this simple case as the basis of our construction, and will remove these assumptions later.

We set $\ba_k=a_k,\ba_{k+\half}=a_{k+\half},\ba_{k+1}=a_{k+1}$,
$\bz_k=z_k,\bz_{k+\half}=z_{k+\half},\bz_{k+1}=z_{k+1}$, $\bF(\bz_k) = F(z_k)$, $\bF(\bz_{k+\half}) = F(z_{k+\half})$ and $\bF(\bz_{k+1}) = F(z_{k+1})$.
We first argue that Property 4 holds. Since $F(\cdot)$ is monotone and $L$-Lipschitz, Inequality~\eqref{eq:reduced Lipschitz} and~\eqref{eq:reduced monotone} are satisfied.
In addition,
Inequality~\eqref{eq: reduced gradient plane} is satisfied due to the definition of $a_k$.

Next, we show that property 3 holds. By the definition of $\ba_{k+\half}$ (or $\ba_{k+1}$), it is clear that $\ba_{k+\half}$ (or $\ba_{k+1}$) and $ \bz_{k+\half}-\bz_k + \eta \bF(\bz_k) $ (or $\bz_{k+1}-\bz_k + \eta \bF(\bz_{k+\half})$) are co-directed. 
As  
$-a_k \in \unitnormal_\Z(z_k)$, 
\begin{align}
    \InAngles{\ba_k ,\bz }\geq& \InAngles{\ba_k ,\bz_k }= 0, \quad  \bz \in \{\bz_k,\bz_{k+\half},\bz_{k+1}\}.
\end{align}
According to the update rule of the EG algorithm (Equation~\eqref{def:k+1/2-th step} and~\eqref{def:k+1-th step})
, Equation (\ref{dfn:a_k+1/2}), and Equation (\ref{dfn:a_k+1}), we know that for all $z \in \Z$, 
\begin{align}
    &\InAngles{a_{k+\half} ,z - z_{k+\half} } = \InAngles{ z_{k+\half} - z_k + \eta F(z_k), z-z_{k+\half}} \ge 0,\label{eq:project midpoint} \\
    &\InAngles{a_{k+1} ,z - z_{k+1} } = \InAngles{ z_{k+1} - z_k + \eta F(z_{k+\half}), z-z_{k+1}} \ge 0,\label{eq:project endpoint}
\end{align}
which implies that for any $i\in \{k+\half,k+1\}, j \in \{k,k+\half,k+1\}$, $\InAngles{\ba_i ,\bz_j} \geq \InAngles{\ba_i , \bz_i }=0$.

Finally, we verify Property 2.
By Equation~\eqref{eq:project endpoint}, $-\frac{a_{k+1}}{\|a_{k+1}\|} \in \unitnormal(z_{k+1})$,
which in combination with Lemma~\ref{lem:equivalent hamiltonian} implies
\begin{align*}
    &\InNorms{\bF(\bz_{k+1})-\frac{\InAngles{\bF(\bz_{k+1}),\ba_{k+1}}\cdot \ba_{k+1}}{\InNorms{\ba_{k+1}}^2}\mathbbm{1}[\InAngles{\bF(\bz_{k+1}),\ba_{k+1}}\ge 0]}^2 \\
    \geq& \min_{\substack{a\in \unitnormal_\Z(z_{k+1}),\\ \InAngles{F(z_{k+1}),a}\le 0}}
\|F(z_{k+1}) - \InAngles{F(z_{k+1}),a}\cdot a\|^2\\
=& \Ham(z_{k+1})^2.
\end{align*}
According to Lemma~\ref{lem:equivalent hamiltonian} and Equation (\ref{dfn:a_k}), we know that
$\Ham(z_k)^2  = \|F(z_k) - \InAngles{F(z_k),a_k}\cdot a_k\|^2 = \InNorms{\bF(\bz_k) - \frac{\InAngles{\bF(\bz_k),\ba_k}\cdot \ba_k}{\InNorms{\ba_k}^2}}^2$.
If $\Ham(z_k)^2 - \Ham(z_{k+1})^2 < 0$, then
\begin{align}
0&>\Ham(z_k)^2 - \Ham(z_{k+1})^2  \notag\\
&\geq
\InNorms{\bF(\bz_k) - \frac{\InAngles{\bF(\bz_k),\ba_k}\cdot \ba_k}{\InNorms{\ba_k}^2} }^2
-
\InNorms{\bF(\bz_{k+1})-\frac{\InAngles{\bF(\bz_{k+1}),\ba_{k+1}}\cdot \ba_{k+1}}{\InNorms{\ba_{k+1}}^2}\mathbbm{1}[\InAngles{\bF(\bz_{k+1}),\ba_{k+1}}\ge 0]}^2.\notag\\
&\qquad\qquad\qquad \label{eq:bound hamiltonian}
\end{align}
Observe that when vectors $a_k$ and $a_{k+1}$ satisfy the form property, then $\InNorms{\ba_k}=1$, and $\InNorms{\ba_{k+1}}\geq 1$ and Equation~\eqref{eq:bound hamiltonian} is well-defined.

This completes the proof for the case, where (i) vectors $a_k$, $a_{k+\half}$, and $a_{k+1}$ satisfy the form property,
and (ii) $\InAngles{a_k,z_k}
=\InAngles{a_{k+\half},z_{k+\half}}
=\InAngles{a_{k+1},z_{k+1}}=0$. Our next step is to remove the assumptions. We first show how to modify the construction so that for any $a_k,a_{k+\half}, a_{k+1}$, we can construct vectors $\ha_k,\ha_{k+\half}, \ha_{k+1}, \hz_k,\hz_{k+\half}, \hz_{k+1}, \hF(\hz_k),\hF(\hz_{k+\half}),\hF(\hz_{k+1})\in \mathbb{R}^{n+5}$, that satisfy, among other properties, (a)$\InAngles{\ha_k,\hz_k}=\InAngles{\ha_{k+\half},\hz_{k+\half}}=\InAngles{\ha_{k+1},\hz_{k+1}}=0$, and (b) vectors $\ha_k$, $\ha_{k+\half}$, $\ha_{k+1}$ are linear independent. In our final step, we  choose a proper basis to construct vectors $\ba_k,\ba_{k+\half},\ba_{k+1}$,$\bz_k$,$\bz_{k+\half}$,$\bz_{k+1}$,
$\bF(\bz_k)$,$\bF(\bz_{k+\half})$, $\bF(\bz_{k+1})\in \mathbb{R}^{n+5}$ that satisfy all four properties in the statement of Lemma~\ref{lem:reduce to cone}.  

We now present the construction of vectors $\ha_k$, $\ha_{k+\half}$, $\ha_{k+1}$, $\hz_k$, $\hz_{k+\half}$, $\hz_{k+1}$, $\hF(\hz_k)$, $\hF(\hz_{k+\half})$, $\hF(\hz_{k+1})$. High-levelly speaking, we introduce five dummy dimensions.
The purpose of the first dummy dimension is to ensure  property (a). We use the remaining four dummy dimensions to ensure that the newly created vectors $\ha_k$, $\ha_{k+\half}$ and $\ha_{k+1}$ are linearly independent satisfying property (b).
More specifically for parameters $\ell, \epsilon>0$ that we determine later,
we define $\hz_k$, $\hz_{k+\half}$, $\hz_{k+1}$, $\hF(\hz_k)$, $\hF(\hz_{k+\half})$, $\hF(\hz_{k+1})$, $\ha_k$, $\ha_{k+\frac{1}{2}}$, $\ha_{k+1}$ as follows
\begin{align}
    &\hz_i := (-\epsilon^{-1},0,0,0,0,z_i) \qquad\qquad\qquad\qquad\forall i\in\{k,k+\half,k+1\}\\
    &\widehat{F}(\hz_k) :=
    (\frac{\epsilon}{\eta}\cdot \InAngles{a_{k+\half},z_{k+\half}},0,\frac{\epsilon}{\eta},0,\frac{\ell\epsilon}{\eta}, F(z_k)) \\
    &\widehat{F}(\hz_{k+\half}) :=
    (\frac{\epsilon}{\eta}\cdot \InAngles{a_{k+1},z_{k+1}},0,0,\frac{\epsilon}{\eta},\frac{\ell\epsilon}{\eta}, F(z_{k+\half})) \\ 
    &\widehat{F}(\hz_{k+1}) :=
    (\frac{\epsilon}{\eta}\cdot \InAngles{a_{k+1},z_{k+1}},0,0,\frac{\epsilon}{\eta}, \frac{\ell\epsilon}{\eta}, F(z_{k+1}))\\
    &\ha_k := (\epsilon\cdot \InAngles{a_k,z_k},\epsilon,0,0,\ell\epsilon, a_k)\\ 
    &\ha_{k+\half} := (\epsilon\cdot\InAngles{a_{k+\half},z_{k+\half}},0,\epsilon,0,\ell\epsilon,a_{k+\half})\\
    &\ha_{k+1} := (\epsilon\cdot\InAngles{a_{k+1},z_{k+1}},0,0,\epsilon,\ell\epsilon,a_{k+1})
\end{align}

\notshow{
Since the first four coordinates of each point in $\hZ$ are fixed,
it is easy to see that $\widehat{F}$ over $\hZ$ is monotone and $L$-Lipschitz.
We define $\hz_k$ to be $(-1,0,0,0,z_k)$. It is not hard to see by executing EG on $\hZ$ for $\hF$, $\hz_{k+\half}=(-1,0,0,0,z_{k+\half})$ and $\hz_{k+1}=(-1,0,0,0,z_{k+1})$.}
Clearly, $\ha_k$, $\ha_{k+\frac{1}{2}}$, $\ha_{k+1}$ are linear independent,
$\ha_{k+1}$ and $\hz_{k+1} -\hz_k +\eta \hF(\hz_{k+\half})$ are co-directed,
and $\ha_{k+\half}$ and $\hz_{k+\half} -\hz_k +\eta \hF(\hz_k)$ are co-directed.
Note that the following inequalites hold. 
By Equation~\eqref{dfn:a_k}-\eqref{dfn:a_k+1}, it is clear that $-a_k\in \unitnormal(z_k)\subseteq \normal(z_k)$, $-a_{k+\half}\in \normal(z_{k+\half})$ and $-a_{k+1}\in \normal(z_{k+1})$, which further implies
\begin{align}
&\langle \ha_i,\hz_j \rangle = \langle a_i, z_j \rangle - \langle a_i, z_i \rangle \geq 0,
\qquad  \forall i,j\in\{k,k+\half,k+1\} \label{eq:before limit start}\\
&\langle \ha_i,\hz_i \rangle = 0,
\qquad\qquad \qquad\qquad\qquad~~  \forall i\in\{k,k+\half,k+1\}\\
& 
\InNorms{\hF(\hz_{k+1})-\hF(\hz_{k+\half})}^2=\InNorms{F(z_{k+1})-F(z_{k+\half})}^2\leq  L^2\InNorms{z_{k+1} - z_{k+\half}}^2=\InNorms{\hz_{k+1} - \hz_{k+\half}}^2
\\
&\InAngles{\hF(\hz_{k+1})-\hF(\hz_k), \hz_{k+1} - \hz_k}
=
\InAngles{F(z_{k+1})-F(z_k), z_{k+1} - z_k}\geq 0
\end{align}

Moreover, $\InAngles{\ha_k, \hF(\hz_k)}=
\frac{\epsilon^2}{\eta}\cdot\left(\InAngles{a_{k},z_{k}}\InAngles{a_{k+\half},z_{k+\half}}+\ell^2\right) + \InAngles{a_k, F(z_k)}$. We choose $\ell$ to be sufficiently large so that $\InAngles{a_{k},z_{k}}\InAngles{a_{k+\half},z_{k+\half}}+\ell^2\geq 0$. Hence, for our choice of $\ell$, \begin{equation}\label{eq:limit eq start}
    \InAngles{\ha_k, \hF(\hz_k)}\geq \InAngles{a_k, F(z_k)}\geq 0.
\end{equation}

We define function \begin{align*}
\widehat{H}(\hz_k) &:=
\InNorms{\hF(\hz_k)-\frac{\InAngles{\hF(\hz_k),\ha_k}}{\InNorms{\ha_k}^2}\cdot\ha_k}^2\\
&= \InNorms{\hF(\hz_k)-\frac{\frac{\epsilon^2}{\eta}\cdot\left(\InAngles{a_{k},z_{k}}\InAngles{a_{k+\half},z_{k+\half}}+\ell^2\right) + \InAngles{a_k, F(z_k)}}{\epsilon^2(\InAngles{a_k,z_k}^2+1+\ell^2)+\InNorms{a_k}^2}\cdot \ha_k}^2.
\end{align*}

Define $f(\epsilon):= \frac{\frac{\epsilon^2}{\eta}\cdot\left(\InAngles{a_{k},z_{k}}\InAngles{a_{k+\half},z_{k+\half}}+\ell^2\right) + \InAngles{a_k, F(z_k)}}{\epsilon^2(\InAngles{a_k,z_k}^2+1+\ell^2)+\InNorms{a_k}^2}$. We can simplify $\widehat{H}(\hz_k)$ to be
\begin{align*}
\epsilon^2\cdot \InNorms{ 
    \left(\frac{\InAngles{a_{k+\half},z_{k+\half}}}{\eta}-f(\epsilon)\InAngles{a_k,z_k} ,-f(\epsilon),\frac{1}{\eta},0,\frac{\ell}{\eta}-f(\epsilon)\ell \right)}^2+\InNorms{F(z_k)-f(\epsilon)\cdot a_k}^2.
\end{align*}
When $a_k=(0,\ldots,0)$,
 $\Ham(z_k)^2=\InNorms{F(z_k)}^2$ (Equation~\eqref{dfn:a_k}) and $f(\epsilon)=\frac{\ell^2}{\eta(1+\ell^2)}$ for any $\epsilon>0$. Therefore, $\lim_{\epsilon\rightarrow 0^+}\widehat{H}(\hz_k)=\InNorms{F(z_k)}^2=\Ham(z_k)^2$, when $a_k=(0,\ldots,0)$. 
When $a_k\neq(0,\ldots,0)$, $\Ham(z_k)^2=\InNorms{F(z_k)-\frac{\InAngles{a_k,F(z_k)}}{\InNorms{a_k}^2}\cdot a_k}^2$ (Equation~\eqref{dfn:a_k}) and $\lim_{\epsilon\rightarrow 0^+}f(\epsilon) = \frac{\InAngles{a_k,F(z_k)}}{\InNorms{a_k}^2}$, so $\lim_{\epsilon\rightarrow 0^+}\widehat{H}(\hz_k)=\InNorms{F(z_k)-\frac{\InAngles{a_k,F(z_k)}}{\InNorms{a_k}^2}\cdot a_k}^2=\Ham(z_k)^2$.

\notshow{
\begin{align*}
&\widehat{H}(\hz_k)=\InNorms{\hF(\hz_k)-\frac{\InAngles{\hF(\hz_k),\ha_k}}{\InNorms{\ha_k}^2}\cdot\ha_k}^2=\InNorms{\hF(\hz_k)-\frac{\epsilon^2\InParentheses{\InAngles{a_k,z_k}^2+1}+\InAngles{a_k,F(z_k)}}{\epsilon^2\InParentheses{\InAngles{a_k,z_k}^2+1}+ \InNorms{a_k}^2}\cdot\ha_k}^2 \\
=&
\epsilon^2\frac{\InParentheses{\InAngles{a_k,z_k}^2+1}\InParentheses{\InNorms{a_k}^2-\InAngles{a_k,F(z_k)}}^2}{\InParentheses{\epsilon^2\InParentheses{\InAngles{a_k,z_k}^2+1}+ \InNorms{a_k}^2}^2}  +
\InNorms{F(z_k)-\frac{\epsilon^2\InParentheses{\InAngles{a_k,z_k}^2+1}+\InAngles{a_k,F(z_k)}}{\epsilon^2\InParentheses{\InAngles{a_k,z_k}^2+1}+ \InNorms{a_k}^2}\cdot a_k}^2
\end{align*}
Clearly, when $a_k\neq (0,\ldots,0)$, $\widehat{H}(\hz_k)$ is continuous with respect to $\epsilon$ and $\lim_{\epsilon\rightarrow 0^+}\widehat{H}(\hz_k)$ = $
\InNorms{F(z_k)-\frac{\InAngles{a_k,F(z_k)}}{\InNorms{a_k}^2}\cdot a_k}^2$ = $\Ham(z_k)^2$.}

Similarly, we define function 
\begin{align*}
    \widehat{H}(\hz_{k+1}) &:= \InNorms{\hF(\hz_{k+1})-\frac{\InAngles{\hF(\hz_{k+1}),\ha_{k+1}}\cdot \ha_{k+1}}{\InNorms{\ha_{k+1}}^2}\mathbbm{1}\left[\InAngles{\hF(\hz_{k+1}),\ha_{k+1}}\ge 0\right]}^2\\
    &=\InNorms{\hF(\hz_{k+1})-\frac{\frac{\epsilon^2}{\eta}\cdot\left(\InAngles{a_{k+1},z_{k+1}}^2+1+\ell^2\right)+\InAngles{F(z_{k+1}),a_{k+1}}}{\epsilon^2(\InAngles{a_{k+1},z_{k+1}}^2+1+\ell^2)+\InNorms{a_{k+1}}^2}\cdot \ha_{k+1}\ind\left[\InAngles{\hF(\hz_{k+1}),\ha_{k+1}}\geq 0\right]}^2.
\end{align*}

Define $g(\epsilon):=\frac{\frac{\epsilon^2}{\eta}\cdot\left(\InAngles{a_{k+1},z_{k+1}}^2+1+\ell^2\right)+\InAngles{F(z_{k+1}), a_{k+1}}}{\epsilon^2(\InAngles{a_{k+1},z_{k+1}}^2+1+\ell^2)+\InNorms{a_{k+1}}^2}$, and we can simplify $\widehat{H}(\hz_{k+1})$ to be 
\begin{align*}
    \epsilon^2\left(\frac{1}{\eta}-g(\epsilon)\ind\left[\InAngles{\hF(\hz_{k+1}),\ha_{k+1}}\geq 0\right]\right)^2 &\cdot\InNorms{\InParentheses{\InAngles{a_{k+1},z_{k+1}},0,0,1,\ell}}^2\\
   &+\InNorms{F(z_{k+1})-g(\epsilon)\cdot a_{k+1}\ind\left[\InAngles{\hF(\hz_{k+1}),\ha_{k+1}}\geq 0\right]}^2.
\end{align*}

When $a_{k+1}=(0,\ldots, 0)$, we have $ g(\epsilon)=\frac{1}{\eta}$ and $\InAngles{\hF(\hz_{k+1}),\ha_{k+1}}\geq 0$, so $\widehat{H}(\hz_{k+1})=\InNorms{F(z_{k+1})}^2\geq \Ham(z_{k+1})^2$. When $a_{k+1}\neq (0,\ldots, 0)$, we have $\lim_{\epsilon\rightarrow 0^+} g(\epsilon)=\frac{\InAngles{F(z_{k+1}),a_{k+1}}}{\InNorms{a_{k+1}}^2}$ and $\lim_{\epsilon\rightarrow 0^+} \ind\left[\InAngles{\hF(\hz_{k+1}),\ha_{k+1}}\geq 0\right]= \ind\left[\InAngles{F(z_{k+1}),a_{k+1}}\geq 0\right]$,\footnote{This is because $\InAngles{\hF(\hz_{k+1}),\ha_{k+1}}$ is never smaller than $\InAngles{F(z_{k+1}),a_{k+1}}$, and the function $\ind[x\geq 0]$ is right continuous.} hence $$\lim_{\epsilon\rightarrow 0^+}\widehat{H}(\hz_{k+1})=\InNorms{F(z_{k+1})-\frac{\InAngles{F(z_{k+1}),a_{k+1}}}{\InNorms{a_{k+1}}^2}\cdot a_{k+1}\ind\left[\InAngles{F(z_{k+1}),a_{k+1}}\geq 0\right]}^2\geq \Ham(z_{k+1})^2.$$ The last inequality is because $\frac{-a_{k+1}}{\InNorms{a_{k+1}}}\in \unitnormal_{\mathcal{Z}}(z_{k+1})$ and Lemma~\ref{lem:equivalent hamiltonian}.

\notshow{
$\InNorms{\hF(\hz_{k+1}) - \hF(\hz_{k+\half})}^2$
 are continuous with respect to $\epsilon$ and 
\begin{align}
\lim_{\epsilon\rightarrow 0^+}\InNorms{\hF(\hz_{k+1}) - \hF(\hz_{k+\half})}^2 = \InNorms{F(z_{k+1}) - F(z_{k+\half})}^2,\notag\\
\lim_{\epsilon\rightarrow 0^+}
\widehat{H}(\hz_{k+1}) 
\geq \Ham(z_{k+1})^2 \label{eq:lim at end}.
\end{align}

Observe that if $z_{k+\half}=z_{k+1}$,
then $z_{k+1}=\prod_\Z\InParentheses{z_k-\eta F(z_{k+\half})}=\prod_{PP}\InParentheses{z_k-\eta F(z_{k+\half})}$ and
$H_{(F,\Z)}(z_{k+1})\geq H_{(F,\Z)}(z_{k+1})$,
a contradiction.
Since by assumption we have $\Ham(z_{k+1})^2 - \Ham(z_k)^2>0$,
then by choosing sufficiently small $\epsilon>0$,
we can ensure that 
\begin{align}
\left|\InNorms{\hF(\hz_{k+1}) - \hF(\hz_{k+\half})}^2 - \InNorms{F(z_{k+1}) - F(z_{k+\half})}^2\right| \leq L^2\InNorms{\hz_{k+1}-\hz_{k+\half}}^2, \label{eq:limit eq start}\\
\widehat{H}(\hz_{k}) - \Ham(z_k)^2
= \widehat{H}(\hz_{k}) - \lim_{\epsilon\rightarrow 0^+}
\widehat{H}(\hz_{k}))  \leq  \frac{\Ham(z_{k+1})^2 - \Ham(z_k)^2}{4}.\label{eq:limit z_k ham} \\
\Ham(z_{k+1})^2 - \widehat{H}(\hz_{k+1})
\leq \lim_{\epsilon\rightarrow 0^+}
\widehat{H}(\hz_{k+1}))  - \widehat{H}(\hz_{k+1})  \leq  \frac{\Ham(z_{k+1})^2 - \Ham(z_k)^2}{4}.\label{eq:limit z_k+1 ham} \end{align}
}

If $\Ham(z_k)^2-\Ham(z_{k+1})^2 < 0$, then 
$$\lim_{\epsilon\rightarrow 0^+} \left(\widehat{H}(\hz_{k})-\widehat{H}(\hz_{k+1})\right) \leq \Ham(z_k)^2-\Ham(z_{k+1})^2  <0.$$
Thus, we can choose a sufficiently small $\epsilon$ so that $\widehat{H}(\hz_{k})-\widehat{H}(\hz_{k+1}) <0$.
Together with Inequalities~\eqref{eq:before limit start}-\eqref{eq:limit eq start}, we can satisfy  all properties excluding the form property using $\hz_k$, $\hz_{k+\half}$, $\hz_{k+1}$, $\ha_k$, $\ha_{k+\half}$, $\ha_{k+1}$, $\hF(\hz_k)$, $\hF(\hz_{k+\half})$, $\hF(\hz_{k+1})$.

Now we show how to make the vectors also satisfy the form property.
We perform a change of basis so that vectors $\ha_k$, $\ha_{k+\half}$ and $\ha_{k+1}$ only depend on the first three coordinates.
We use the Gram–Schmidt process to generate a basis,
where vectors $\ha_k$, $\ha_{k+\half}$, $\ha_{k+1}$ all lie in the span of the first three vector of the new basis.
More formally,
let $N=n+5$ and $\{b_i\}_{i\in[N]}$ be a sequence of orthonormal vectors produced by the Gram-Schmidt process on ordered input $\ha_{k+1}$, $\ha_{k+\half}$, $\ha_k$ and $\{e_i\}_{i\in[N]}$.
Let $Q$ be the $N\times N$ matrix,
where the $i$-th row of $Q$ is vector $b_i$.
Observe that any vector $z\in \R^{N}$ written in the basis $\{e_i\}_{i\in[N]}$ can be represented by the basis $\{b_i\}_{i\in [N]}$ with coefficients $Q\cdot z$.

Let $\ba_k = Q \cdot \ha_k$, $\ba_{k+\half} = Q \cdot \ha_{k+\half}$, 
$\ba_{k+1} = Q \cdot \ha_{k+1}$, 
$\bz_k = Q \cdot \hz_k$, $\bz_{k+\half} = Q \cdot \hz_{k+\half}$, 
$\bz_{k+1} = Q \cdot \hz_{k+1}$,
$\bF(\bz_k) = Q \cdot \hF(\hz_k)$,
$\bF(\bz_{k+\half}) = Q \cdot \hF(\hz_{k+\half})$ and $\bF(\bz_{k+1}) = Q \cdot \hF(\hz_{k+1})$.
Note that $\ba_k$ is the coefficients of $\ha_k$ written in the basis $\{b_i\}_{i\in[N]}$ and 
similar reasoning holds for the rest of the defined vectors.
Clearly, for any $\hz,\hz'\in \R^{N}$, if we define $\bz=Q\cdot \hz$ and $\bz' = Q \cdot \hz'$, then we have
\begin{align}
    \langle \bz,\bz' \rangle = \langle \hz,\hz' \rangle. \label{eq:equal after rotation}
\end{align} 
Combining Equation~\eqref{eq:equal after rotation} and the fact that all properties but the form property hold for vectors $\ha_k$, $\ha_{k+\half}$, $\ha_{k+1}$, $\hz_k$, $\hz_{k+\half}$, $\hz_{k+1}$, $\hF(\hz_k)$, $\hF(\hz_{k+\half})$ and $\hF(\hz_{k+1})$,
we conclude that the same properties also hold for vectors $\ba_k$, $\ba_{k+\half}$, $\ba_{k+1}$, $\bz_k$, $\bz_{k+\half}$, $\bz_{k+1}$, $\bF(\bz_k)$, $\bF(\bz_{k+\half})$ and $\bF(\bz_{k+1})$.

Finally, by properties of the Gram-Schmidt process, the order of the vector in its input, and the fact that
vectors $\ha_k,\ha_{k+\half}$ and $\ha_{k+1}$ are linearly independent, we have
$\ha_{k+1}\in \Span(b_1)$, $\ha_{k+\half}\in \Span(b_1,b_2)$, $\ha_k\in \Span(b_1,b_2,b_3)$,
$\InAngles{\ha_{k+1},b_1}>0$,  $\InAngles{\ha_{k+\half},b_2}>0$
and $\InAngles{\ha_k,b_3}>0$.
Thus $\ba_k=(\beta_1,\beta_2,b,0,\ldots,0)$,
$\ba_{k+\half}=(\alpha,c,0,\ldots,0)$ 
and $\ba_{k+1}=(d,0,\ldots,0)$,
where $\beta_1, \beta_2, \alpha\in \R$ and $b,c,d > 0$.
By properly scaling $\ba_k$, $\ba_{k+\half}$ and $\ba_{k+1}$, we can make them satisfy the form property. This completes the proof.
\end{prevproof}

\notshow{
\begin{proof}
Recall that the $k$-th update of EG is as follows:
\begin{align}
    z_{k+\frac{1}{2}} &= \Pi_\Z\left[z_k-\eta F(z_k)\right]=\arg \min_{z\in \Z} \| z-\left(z_k-\eta F(z_k)\right)\|\\
    z_{k+1} & = \Pi_\Z\left[z_k-\eta F(z_{k+\frac{1}{2}})\right]= \arg \min_{z\in \Z} \left \| z-\left(z_k-\eta F(z_{k+\frac{1}{2}})\right)\right \|
\end{align}
We define the following vectors
\begin{align}
    -a_k &\in \argmin_{\substack{a\in \unitnormal_\Z(z_k),\\ \InAngles{F(z_k),a}\le 0}}
\|F(z_k) - \InAngles{F(z_k),a}a\|^2 \label{dfn:a_k} \\
    a_{k+\half} &= z_{k+\half} - z_k + \eta F(z_k) \label{dfn:a_k+1/2}\\
    a_{k+1} &= z_{k+1} - z_k + \eta F(z_{k+\half}).\label{dfn:a_k+1}
\end{align}
For now, let us assume that $a_k$, $a_{k+\frac{1}{2}}$, and $a_{k+1}$ are linear independent and have the form in the statement of the lemma and that $\InAngles{a_k,z_k}$, $\InAngles{a_{k+\half},z_{k+\half}}$, $\InAngles{a_{k+1},z_{k+1}}$ are all zero.
Then the instance $\mathcal{I}'$ coincides with instance $\mathcal{I}$, 
that is we set $\ha_k=a_k,\ha_{k+\half}=\ha_{k+\half},\ha_{k+1}=a_{k+1}$,
$\hz_k=z_k,\hz_{k+\half}=\hz_{k+\half},\hz_{k+1}=z_{k+1}$, $\hF(\cdot) = F(\cdot)$ and $\hZ = \Z$.
It is obvious that $\widehat{F}$ is monotone and $L$-Lipschitz.
We verify the other properties of $a_k$, $a_{k+\frac{1}{2}}$, $a_{k+1}$ as follows. 

By Equation (\ref{dfn:a_k}), we know that $\InAngles{F(z_k),a_k} \ge 0$. We also know that $-a_k \in \unitnormal_\Z(z_k)$ and thus
\begin{align}
    \InAngles{a_k ,z }\geq& \InAngles{a_k ,z_k } \quad \forall z \in \Z.
\end{align}
According to the update of EG (\ref{alg:EG}), Equation (\ref{dfn:a_k+1/2}), and Equation (\ref{dfn:a_k+1}), we know that for all $z \in \Z$, 
\begin{align}
    &\InAngles{a_{k+\half} ,z - z_{k+\half} } = \InAngles{ z_{k+\half} - z_k + \eta F(z_k), z-z_{k+\half}} \ge 0, \\
    &\InAngles{a_{k+1} ,z - z_{k+1} } = \InAngles{ z_{k+1} - z_k + \eta F(z_{k+\half}), z-z_{k+1}} \ge 0.
\end{align}
Thus $-a_{k+\half} \in \unitnormal(z_{k+\half})$ and $-a_{k+1} \in \unitnormal(z_{k+1})$. Additionally, we have
\begin{align}
    \InAngles{a_{k+\frac{1}{2}},z_k-\eta F(z_k) - z_{k+\half}} &\le 0,\\
    \InAngles{a_{k+1},z_k-\eta F(z_{k+\half}) - z_{k+1}} &\le 0.
\end{align}
By the definition of $a_{k+\frac{1}{2}}$ and $a_{k+1}$, we know that for any $a_{k+\half}^\perp$ and $a_{k+1}^{\perp}$ such that $\InAngles{a_{k+\half}^\perp, a_{k+\half} }=\InAngles{a_{k+1}^\perp, a_{k+1} }=0$:
\begin{align*}
&\InAngles{a_{k+\half}^\perp ,z_k - F(z_k) - z_{k+\half}} = 0,\\
&\InAngles{a_{k+1}^\perp ,z_k - F(z_{k+\half}) -  z_{k+1}} = 0.
\end{align*}

According to Definition \ref{def:projected hamiltonian} and Equation (\ref{dfn:a_k}), we know 
\begin{align}
    H(z_k) = \|F(z_k) - \InAngles{F(z_k),-a_k} (-a_k)\|^2 = \|F(z_k) - \InAngles{F(z_k),a_k} a_k\|^2.
\end{align}
Note that $-a_{k+1} \in \unitnormal(z_{k+1})$. If $\InAngles{F(z_{k+1}),a_{k+1}}\ge 0$, then
\begin{align}
    &\|F(z_{k+1})-\InAngles{F(z_{k+1}),a_{k+1}}a_{k+1}\mathbbm{1}[\InAngles{F(z_{k+1}),a_{k+1}}\ge 0]\|^2 \\
    &= \|F(z_{k+1})-\InAngles{F(z_{k+1}),a_{k+1}}a_{k+1}\|^2 \\
    &\ge \min_{\substack{a\in \unitnormal_\Z(z_{k+1}),\\ \InAngles{F(z_{k+1}),a}\le 0}}
\|F(z_{k+1}) - \InAngles{F(z_{k+1}),a}a\|^2\\
    & = H(z_{k+1}).
\end{align}
Otherwise $\InAngles{F(z_{k+1}),a_{k+1}}< 0$, then 
\begin{align}
    &\|F(z_{k+1})-\InAngles{F(z_{k+1}),a_{k+1}}a_{k+1}\mathbbm{1}[\InAngles{F(z_{k+1}),a_{k+1}}\ge 0]\|^2 \\
    &= \|F(z_{k+1})\|^2 \\
    & \ge H(z_{k+1}).
\end{align}
Thus we know 
\begin{align*}
&H(z_k) - H(z_{k+1}) \\
&\geq
\|F(z_k)-\InAngles{F(z_k),a_k}a_k\|^2 - \|F(z_{k+1})-\InAngles{F(z_{k+1}),a_{k+1}}a_{k+1}\mathbbm{1}[\InAngles{F(z_{k+1}),a_{k+1}}\ge 0]\|^2.
\end{align*}
This completes the proof for the case where $a_k$, $a_{k+\frac{1}{2}}$, $a_{k+1}$ are linear independent and $\InAngles{\ha_k,\hz_k}$,$\InAngles{\ha_{k+\half}, \hz_{k+\half}}$, $\InAngles{\ha_{k+1},\hz_{k+1}}$ are all equal to $0$. 
In a high-level idea, we introduce four dummy dimensions,
the purpose of the dummy dimension is to ensure that the updated equations $\InAngles{\ha_k,\hz_k}$,$\InAngles{\ha_{k+\half}, \hz_{k+\half}}$, $\InAngles{\ha_{k+1},\hz_{k+1}}$ are all equal to zero and the remaining three dummy dimensions ensure that our vectors are linearly independent.

Now we return to the case where $a_k$, $a_{k+\frac{1}{2}}$, $a_{k+1}$ are linear dependent or when $\InAngles{\ha_k,\hz_k},\InAngles{\ha_{k+\half},\hz_{k+\half}},\InAngles{\ha_{k+1},\hz_{k+1}}$ are not all equal to $0$. The idea here is to introduce four more dimensions. Specifically, we define $\hZ \subseteq \R^{n+4}$ and $\widehat{F}: \hZ \rightarrow \R^{n+4}$ as follows
\begin{align}
    &\hZ := \{(-1,0,0,0,z): z\in \Z \}.\\
    &\widehat{F}((-1,0,0,0,z)) :=
    \begin{cases}
    (\InAngles{a_k,z_k},1,0,0, F(z_k)), &\text{ if $z=z_k$ } \\
    (\InAngles{a_{k+\half},z_{k+\half}},0,1,0, F(z_{k+\half})), &\text{ if $z=z_{k+\half}$ } \\
    (\InAngles{a_{k+1},z_{k+1}},0,0,1, F(z_{k+1})), &\text{ if $z=z_{k+1}$ } \\
    (0,0,0,0, F(z)), &\forall z\in \Z\backslash\{z_k,z_{k+\half},z_{k+1}\}.
 \\
    \end{cases}
\end{align}
Since the first four coordinates of each point in $\hZ$ are fixed,
it is easy to see that $\widehat{F}$ over $\hZ$ is monotone and $L$-Lipschitz.
\yangnote{We define $\hz_k$ to be $(-1,0,0,0,z_k)$. It is not hard to see by executing EG on $\hZ$ for $\hF$, $\hz_{k+\half}=(-1,0,0,0,z_{k+\half})$ and $\hz_{k+1}=(-1,0,0,0,z_{k+1})$.}
We define $\ha_k$, $\ha_{k+\frac{1}{2}}$, $\ha_{k+1}$ as follows:
\begin{align}
    \ha_k &= (\InAngles{a_k,z_k},1,0,0,a_k),\\
    \ha_{k+\half} &= (\InAngles{a_{k+\half},z_{k+\half}},0,1,0,a_{k+\half}),\\
    \ha_{k+1} &= (\InAngles{a_{k+1},z_{k+1}},0,0,1,a_{k+1}).
\end{align}
Clearly, $\ha_k$, $\ha_{k+\frac{1}{2}}$, $\ha_{k+1}$ are linear independent.
Note that $\forall \hz=(-1,0,0,0,z)\in \hZ$
\begin{align}
\langle \ha_i,\hz \rangle =& \langle a_i, z \rangle - \langle a_i, z_i \rangle \geq 0
\qquad &\forall i\in\{k,k+\half,k+1\},\\
\langle \ha_i,\hz_i \rangle =& 0
\qquad &\forall i\in\{k,k+\half,k+1\},\\
\InAngles{\ha_k, \hF(\hz_k)} =& \InAngles{a_k, F(z_k)}\geq 0
\\
\ha_i =& \hz_i - \hz_k -\hF(\hz_{i-\half})
&\forall i\in\{k+\half,k+1\}
\end{align} Hence, all the properties are satisfied by $\hz_k,\hz_{k+\half},\hz_1$ and $\ha_k,\ha_{k+\half},\ha_{k+1}$.

Now we perform a change of base so that vectors $\ha_k,\ha_{k+\half}$ and $\ha_{k+1}$ only depend on the first three coordinates.
We use Gram–Schmidt process to generate a basis,
where vectors $\ha_k,\ha_{k+\half},\ha_{k+1}$ all lie in the span of the first three vector of the new basis.
More formally,
let $\{b_i\}_{i\in[N]}$ be a sequence of orthonormal vectors produced by the Gram-Schmidt process on ordered input $\ha_{k+1},\ha_{k+\half},\ha_k$ and $\{e_i\}_{i\in[N]}$.
By carefully choosing the directions of vectors $\{b_1,b_2,b_3\}$,
we also have that $\InAngles{b_1,\ha_k}\geq 0$,
$\InAngles{b_2,\ha_{k+\half}}\geq 0$
and $\InAngles{b_3,\ha_{k+1}}\geq 0$.
Let $Q$ be the $N\times N$ matrix,
where the $i$-th row of $Q$ is vector $b_i$.
Since $Q$ is orthonormal,
the inverse matrix of $Q$ is $Q^T$.
Observe that a vector $z\in \R^N$ written in base $\{e_i\}_{i\in[N]}$ can be written into base $\{b_i\}_{i\in [N]}$ with coefficients $Q\cdot z$.

We consider the set $\bZ = \{Q\cdot z: z\in \hZ\}$ and operator $\bF(\bz)=Q \cdot \hF(Q^T\cdot \bz):\bZ\rightarrow \R$.
Set $\bZ$ contains the vectors in $\Z$ written in base $\{b_i\}_{i\in[N]}$ and operator $\bF(\bz)$ is the outcome of $\hF(\hz)$ written in base $\{b_i\}_{i\in[N]}$,
where $\hz$ satisfies $\bz=Q\cdot \hz$,
that is $\bz$ is $\hz$ written in base $\{b_i\}_{i\in[N]}$.
Since $Q$ is orthonormal, function $f(\hz)=Q\cdot \hz:\hZ\rightarrow \bZ$ is a bijection mapping between set $\hZ$ and set $\bZ$ with inverse mapping $f^{-1}(\bz)=Q^T\cdot \bz:\bZ\rightarrow \hZ$.
Note that the only difference between using operator $\hF(\hz)$ on $\hz\in \hZ$ is the same as using $\bF(\bz)$ on $\bz\in \bZ$ just written in different base, which implies that we can couple the execution of EG on operator $\bF(\cdot)$ on set $\bZ$ with the execution of EG on operator $\hF(\cdot)$ on set $\hZ$.
For completeness reasons, we include a proof that operator $\bF(\cdot)$ is monotone and that the updates when using operator $\bF(\cdot)$ are the same as using operator $\hF(\cdot)$ just written in different bases.

Let $\ba_k = Q \cdot \ha_k$, $\ba_{k+\half} = Q \cdot \ha_{k+\half}$,
$\ba_{k+1} = Q \cdot \ha_{k+1}$,
$\bz_k = Q \cdot \hz_k$, $\bz_{k+\half} = Q \cdot \hz_{k+\half}$ and
$\bz_{k+1} = Q \cdot \hz_{k+1}$.
Vector $\bz_k$ is vector $\hz_k$ written in base $\{b_i\}_{i\in[N]}$,
similar reasoning holds for the rest of the defined vectors.
Clearly, for any $\hz,\hz'\in \hZ$ and their corresponding points $\bz=Q\cdot \hz,\bz'=Q\cdot \hz\in \bZ$:
\begin{align*}
    \langle \bz,\bz' \rangle = \langle \hz,\hz' \rangle \label{eq:equal after rotation}
\end{align*} 

Combining Equation~\eqref{eq:equal after rotation} and that properties of the Lemma hold for points $\ha_k,\ha_{k+\half},\ha_{k+1},\hz_k,\hz_{k+\half},\hz_{k+1}$, set $\hZ$ and operator $\hF(\cdot)$,
we conclude that the desired properties also hold for $\ba_k,\ba_{k+\half},\ba_{k+1},\bz_k,\bz_{k+\half},\bz_{k+1}$, set $\bZ$ and operator $\bF(\cdot)$.

Moreover, by properties of the Gram-Schmidt process and the order of the vector in its input,
since vectors $\ha_k,\ha_{k+\half}$ and $\ha_{k+1}$ are linearly independent,
$\ha_{k+1}\in Span(b_1),\ha_{k+\half}\in Span(b_1,b_2)$ and $\ha_k\in Span(b_1,b_2,b_3)$ and
$\InAngles{\ha_{k+1},b_1}>0$, $\InAngles{\ha_{k+\half},b_2}>0$,
and $\InAngles{\ha_k,b_3}>0$.
Thus $\ba_k=Q\ha_k=(\beta_1,\beta_2,b,0,\ldots,0)$,
$\ba_{k+\half}=(\alpha,c,0,\ldots,0)$
and $\ba_k=(d,0,\ldots,0)$,
where $\beta_1,\beta_2, \alpha\in \R$ and $b,c,d > 0$.
If $b\neq 1$ or $c\neq 1$ or $d \neq 1$,
then by properly scaling vectors $\ba_k$,$\ba_{k+\half}$ and $\ba_{k+1}$ we can finish the proof.

For completeness reasons, we verify that that operator $\bF(\cdot)$ is monotone and that the updates of EG with operator $\bF(\cdot)$ starting from $\bz_k$ coincide with the updates of EG on $\hF(\cdot)$ on set $\hZ$ written in different bases. 
First we show that $\bF(\cdot)$ is a monotone operator.
For any $\bz,\bz'\in \bZ$:
\begin{align*}
\InAngles{\bF(\bz) - \bF(\bz'), \bz-\bz'}
=&\InAngles{Q\cdot \left(\hF(Q^T\cdot\bz+z^*) - \hF(Q^T\cdot\bz'+z^*)\right), Q\cdot \left(Q^T\left(\bz+z^*-(\bz'+z^*)\right)\right)} \\
=&\InAngles{\hF(Q^T\bz+z^*) - \hF(Q\bz'+z^*),\left(Q^T\bz+z^*\right)-\left(Q^T\bz'+z^*\right)}\geq 0
\end{align*}
In the first equality we used that the since $Q$ is orthonormal,
its inverse matrix is $Q^T$ and in the second equation we used the fact that $F$ is a monotone operator.
Consider the points $\bz_k = Q(\hz_k-z^*)\in \bZ$ and let $\bz_{k+\half}$ and $\bz_{k+1}$ be the points according to the EG update when starting at $\bz_k$ with operator $\bF(\cdot)$ and convex set $\bZ$.
We are going to show that $\bz_{k+\half}=Q\left( z_{k+\half}- \bz^*\right)$ and $\bz_{k+1}=Q\left( z_{k+1}- \bz^*\right)$.
According to EG update rule Equation~\eqref{alg:EG} we have that

\begin{align*}
\bz_{k+\half} = 
\arg \min_{\bz\in \bZ} \| \bz-\left(\bz_k-\eta \bF(\bz_k)\right)\|
= &
\arg \min_{\bz\in \bZ} \| \bz-\left(Q(\hz_k-z^*)-\eta Q\cdot \hF(Q^T\cdot\bz_k+z^*\right)\| \\
=& Q\left( \arg \min_{\hz\in \hZ} \| Q\cdot\left(\hz-z^*\right)-\left(Q(\hz_k-z^*)-\eta Q\cdot \hF(\hz_k)\right)\| - z^*\right) \\
=& Q\left( \arg \min_{\hz\in \hZ} \| Q\cdot\left(\hz - \hz_k- \hF(\hz_k\right)\| - z^*\right) \\
=& Q\left( \arg \min_{\hz\in \hZ} \| \hz - \hz_k- \hF(\hz_k)\| - z^* \right)\\
=& Q\left( \hz_{k+\half} - z^*\right) 
\end{align*}

In the second equality we substitute $\bz\in \bZ$ with $Q(\hz-\hz^*)$ and minimized over $\hz\in\hZ$.
In the fourth equation we used that $Q$ is orthonormal matrix and in the final equality we used that $\hz_{k+\half} = 
\arg \min_{\hz\in \hZ} \| \hz-\left(\hz_k-\eta \hF(\hz_k)\right)\|$.
We can similarly show that $\bz_{k+1}= Q\left(\hz_{k+1} - z^* \right)$.
\end{proof}
}

\notshow{
\begin{figure}[h!]
\colorbox{MyGray}{
\begin{minipage}{0.97\textwidth} {
\noindent\textbf{Feasibility of set of polynomial Inequalities:}
\begin{itemize}
\item $v=\{v_i\}_{i\in[N]}$ are the independent variables.
\end{itemize}
\begin{equation*}
\begin{array}{ll@{}ll}
\text{find} & \displaystyle v \\
\text{s.t.} & \displaystyle f_0(v) < 0\\
&f_i(v) \geq 0 \qquad\qquad \forall i \in [M]\\
&h_i(v) = 0 \qquad\qquad \forall i \in [M']\\
\end{array}
\end{equation*}
\textbf{Constructing a Witness through a SOS Program:}
\begin{itemize}
\item $v=\{v_i\}_{i\in[N]}$ are the independent variables.
\end{itemize}
\begin{equation*}
\begin{array}{ll@{}ll}
\text{find} & \displaystyle \{d_i\}_{i\in[M]},\{d'_i\}_{i\in[N]}\\
\text{s.t.} &  \displaystyle f_0(v) + \sum_{i\in[M]}d_i\cdot f_i(v) + \sum_{i\in[M']}d'_i \cdot h_i(v) \text{ is sum of squares.} \\
&\displaystyle d_i\geq 0 \qquad \forall i \in[M]\\
&\displaystyle d_i\in\R \qquad \forall i \in[M']
\end{array}
\end{equation*}}
\end{minipage}} \caption{Constructing a witness for infeasibility of a set of polynomial Inequalities.}\label{fig:construct witness sos}
\end{figure}
}

In Theorem~\ref{thm:monotone hamiltonian high}, we establish the monotonicity of the \projhamnospace. Our proof is based on the solution to the degree-8 SOS program concerning polynomials in $27$ variables.

\vspace{-.1in}
\begin{theorem}\label{thm:monotone hamiltonian high}
Let $\Z \subseteq \R^n$ be a closed convex set and $F:\Z \rightarrow \R^n$ be a monotone and $L$-Lipschitz operator. For any step size $\eta \in (0, \frac{1}{L}$) and any $z_k \in \Z$, the EG method update satisfies $r^{tan}_{(F,\Z)}(z_k) \ge r^{tan}_{(F,\Z)}(z_{k+1})$.
\end{theorem}

\begin{prevproof}{Theorem}{thm:monotone hamiltonian high}
Assume towards contradiction that $\Ham(z_{k+1}) > \Ham(z_k)$,
using Lemma~\ref{lem:reduce to cone} there exists numbers $\alpha, \beta_1, \beta_2\in \R$ and vectors $\ba_k$, $ \ba_{k+\half}$, $\ba_{k+1}$, $\bz_k$, $\bz_{k+\half}$, $\bz_{k+1}$, $\bF(\bz_k)$, $\bF(\bz_{k+\half})$, $\bF(\bz_{k+1})\in \R^N$ where $N=n+5$ that satisfy the properties in the statement of Lemma~\ref{lem:reduce to cone} and
\begin{align*}
     0&>\InNorms{\bF(\bz_k)-\frac{\InAngles{F(\bz_k),\ba_k}}{\InNorms{\ba_k}^2}\ba_k}^2 
    -
    \InNorms{
    \bF(\bz_{k+1})-\frac{\InAngles{\bF(\bz_{k+1}),\ba_{k+1}}\ba_{k+1}}{\InNorms{\ba_{k+1}}^2}\mathbbm{1}[\InAngles{\bF(\bz_{k+1}),\ba_{k+1}}\ge 0]}^2\notag\\
    & = 
    \|\bF(\bz_k)\|^2-\|\bF(\bz_{k+1})\|^2- \frac{(\beta_1 \bF(\bz_k)[1] + \beta_2 \bF(\bz_k)[2] + \bF(\bz_k)[3])^2}{\beta_1^2 + \beta_2^2 + 1} + \bF(\bz_{k+1})[1]^2  \mathbbm{1}[\bF(\bz_{k+1})[1]\ge 0],
\end{align*}
where we use the fact that $\ba_k = (\beta_1,\beta_2,1,0,\ldots,0)$ 
and $\ba_{k+1}=(1,0,\ldots,0)$.

We use $\tar$ to denote $$\|\bF(\bz_k)\|^2-\|\bF(\bz_{k+1})\|^2 - \frac{(\beta_1 \bF(\bz_k)[1] + \beta_2 \bF(\bz_k)[2] + \bF(\bz_k)[3])^2}{\beta_1^2 + \beta_2^2 + 1} + \bF(\bz_{k+1})[1]^2  \mathbbm{1}[\bF(\bz_{k+1})[1]\ge 0],$$ and our goal is to show that $\tar$ is non-negative, and thus reach a contradiction.

Our plan is to show that we can obtain a sum of quotients of SOS polynomials by adding non-positive terms to $\tar$,
which implies the non-negativity of $\tar$. We add the non-positive terms in a few steps.

Combining Lemma \ref{lem:reduce to cone} and the fact that $\eta>0$ and $\InParentheses{\eta L}^2 \leq 1$, we derive the following two inequalities:
\begin{align}
    \InAngles{\eta \bF(\bz_{k+1}) - \eta\bF(\bz_k),\bz_k-\bz_{k+1}} &\leq0,\label{eq:mononotone} \\
    \InNorms{\eta \bF(\bz_{k+1}) - \eta \bF(\bz_{k+\half})}^2- \InNorms{\bz_{k+1} - \bz_{k+\half}}^2 &\le 0\label{eq:lipsitz}.
\end{align}

Equipped with these two inequalities, it is clear that \begin{equation}\label{eq:SOS first step}
\eta^2\cdot \tar\geq \eta^2\cdot \tar+ 2\cdot \text{LHS of Inequality}~\eqref{eq:mononotone}+\text{LHS of Inequality}~\eqref{eq:lipsitz}.
\end{equation}
Therefore, it is sufficient to show that the RHS of Inequality~\eqref{eq:SOS first step} is non-negative. 

We take advantage of the sparsity of vectors $\ba_k,\ba_{k+\half},\ba_{k+1}$ by
considering the following partition of $[N]$. We define $P_1 = \{1,2,3\}$ and $P_2 = \{4,5,\cdots,N\}$. For any vector $z \in \R^N$ and $j \in \{1,2\}$, we define $p_j(z) \in \R^n$ to be the vector such that $p_j(z)[i] = z[i]$ for $i \in P_j$ and $p_j(z)[i] = 0$ otherwise.
We divide the RHS of Inequality~\eqref{eq:SOS first step} using the partition to Expression~\eqref{eq:unconstrained term} and Expression~\eqref{eq:constrained term}:
\begin{align}\label{eq:unconstrained term}
    &  \eta^2 \InNorms{p_2(\bF(\bz_k))}^2- \eta^2 \InNorms{p_2(\bF(\bz_{k+1}))}^2 + 2\eta \InAngles{p_2(\bF(\bz_{k+1})) - p_2(\bF(\bz_k)) ,p_2(\bz_k)-p_2(\bz_{k+1})} \notag\\
    & \quad + \eta^2\InNorms{p_2(\bF(\bz_{k+1})) - p_2(\bF(\bz_{k+\half}))}^2- \InNorms{p_2(\bz_{k+1}) - p_2(\bz_{k+\half})}^2,
\end{align}
and
\begin{align}\label{eq:constrained term}
    & \eta^2 \InNorms{p_1(\bF(\bz_k))}^2- \eta^2 \InNorms{p_1(\bF(\bz_{k+1}))}^2 + 2\eta \InAngles{p_1(\bF(\bz_{k+1})) - p_1(\bF(\bz_k)) ,p_1(\bz_k)-p_1(\bz_{k+1})} \notag\\
    & \quad + \eta^2\InNorms{p_1(\bF(\bz_{k+1})) - p_1(\bF(\bz_{k+\half}))}^2- \InNorms{p_1(\bz_{k+1}) - p_1(\bz_{k+\half})}^2\notag\\ 
    &\quad - \frac{\eta^2}{\beta_1^2 + \beta_2^2 + 1}\InParentheses{\beta_1 \bF(\bz_k)[1] + \beta_2 \bF(\bz_k)[2] + \bF(\bz_k)[3]}^2  +\eta^2 \bF(\bz_{k+1})[1]^2  \mathbbm{1}[\bF(\bz_{k+1})[1]\ge 0] .
\end{align}

First we show that the Expression~\eqref{eq:unconstrained term} is non-negative.
According to Lemma \ref{lem:reduce to cone}, $\ba_{k+\half}$ and $\bz_{k+\half} -\bz_k +\eta \bF(\bz_k)$ are co-directed,
and $\ba_{k+1}$ and $\bz_{k+1}-\bz_k + \eta \bF(\bz_{k+\half})$ are co-directed. 
For any $i \in P_2$, we know $\ba_{k+\frac{1}{2}}[i] = \ba_{k+1}[i] = 0$ and thus we have
\begin{align*}
0=\InAngles{e_i, \ba_{k+\half}}
=\InAngles{e_i, \bz_{k+\half}-\bz_k + \eta\bF(\bz_{k+1})}\Leftrightarrow
\bz_{k+\half}[i]=\bz_k[i] - \eta\bF(\bz_{k+1})[i], \\
0=\InAngles{e_i, \ba_{k+1}}
=\InAngles{e_i, \bz_{k+1}-\bz_k + \eta\bF(\bz_{k+1})}\Leftrightarrow
\bz_{k+1}[i]=\bz_k[i] - \eta\bF(\bz_{k+1})[i],
\end{align*}
which implies that
\begin{align*}
    p_2(\bz_{k+\half}) &= p_2(\bz_k) -\eta \cdot p_2(\bF(\bz_k)), \\
    p_2(\bz_{k+1}) &= p_2(\bz_k) - \eta \cdot p_2(\bF(\bz_{k+\half})).
\end{align*}
Intuitively, one can think of the coordinates in $P_2$ as the ones where the EG update is unconstrained.
With the two new equalities, it is easy to verify that Expression~\eqref{eq:unconstrained term} is always $0$.
\begin{align*}
    &\eta^2 \InNorms{p_2(\bF(\bz_k))}^2- \eta^2 \InNorms{p_2(\bF(\bz_{k+1}))}^2 + 2\eta \InAngles{p_2(\bF(\bz_{k+1})) - p_2(\bF(\bz_k)) ,p_2(\bz_k)-p_2(\bz_{k+1})} \\
    &\quad + \eta^2\InNorms{p_2(\bF(\bz_{k+1})) - p_2(\bF(\bz_{k+\half}))}^2- \InNorms{p_2(\bz_{k+1}) - p_2(\bz_{k+\half})}^2 \\
     =& \eta^2 \InNorms{p_2(\bF(\bz_k))}^2- \eta^2 \InNorms{p_2(\bF(\bz_{k+1}))}^2 +2\eta^2 \InAngles{p_2(\bF(\bz_{k+1})) - p_2(\bF(\bz_k)) ,p_2(\bF(\bz_{k+\half}))} \\
    &\quad + \eta^2\InNorms{p_2(\bF(\bz_{k+1})) - p_2(\bF(\bz_{k+\half}))}^2- \eta^2\InNorms{p_2(\bF(\bz_k) - p_2(\bF(\bz_{k+\half}))}^2 \\
     =& 0.
\end{align*}

We now turn our attention to Expression~$\eqref{eq:constrained term}$ and show that it is non-negative. The analysis is more challenging here. We  introduce the following six non-positive expressions, multiply each of them with a carefully chosen coefficient, then add them together with 
Expression~\eqref{eq:constrained term}. We finally verify that the sum is a sum of quotients of SOS polynomials implying the non-negativity of Expression~\eqref{eq:constrained term}. We believe it will be extremely challenging if not impossible for human beings to discover these non-positive expressions and their associated coefficients manually to complete this proof. We instead harness the power of the SOS programming to overcome the difficulty and make the discovery.

We first present the six non-positive expressions.
\begin{align}
\bz_{k+\half}[1]\InParentheses{\InParentheses{\bz_{k+\half} -\bz_k +\eta \bF(\bz_k)}[1] - \alpha \InParentheses{\bz_{k+\half} -\bz_k +\eta \bF(\bz_k)}[2]} &= 0,\label{eq:cons1}\\
\bz_{k+1}[2]\InParentheses{\InParentheses{\bz_{k+\half} -\bz_k +\eta \bF(\bz_k)}[1] - \alpha \InParentheses{\bz_{k+\half} -\bz_k +\eta \bF(\bz_k)}[2]} &= 0,\label{eq:cons2} \\
\InParentheses{\alpha\InParentheses{\bz_{k}-\eta\bF(\bz_{k})}[1] + \InParentheses{\bz_{k}-\eta\bF(\bz_{k})}[2]} \InParentheses{\alpha\bz_{k+1}[1]+\bz_{k+1}[2]} &\le 0,\label{eq:cons3} \\
-\eta\InParentheses{\beta_1\bF(\bz_k)[1] + \beta_2\bF(\bz_k)[2] + \bF(\bz_k)[3]} \InParentheses{\beta_1\bz_{k+\half}[1] + \beta_2 \bz_{k+\half}[2] + \bz_{k+\half}[3]} &\le 0,\label{eq:cons4} \\
\bz_k[1]\InParentheses{\bz_k[1] - \eta \bF(\bz_{k+\half})[1]} &\le 0,\label{eq:cons5} \\
-\eta\bF(\bz_{k+1})[1] \mathbbm{1}\inteval{\bF(\bz_{k+1})[1]\le 0 }\InParentheses{\bz_k[1] - \eta \bF(\bz_{k+\half})[1]} &\le 0.\label{eq:cons6} 
\end{align}

Equation~\eqref{eq:cons1} and Equation~\eqref{eq:cons2} follow from the combination of the fact that $\InAngles{(1,-\alpha,0,\cdots,0), \ba_{k+\half}} = 0$ and that $\ba_{k+\half}$ and  $\bz_{k+\half} -\bz_k +\eta \bF(\bz_k)$ are co-directed:
\begin{align*}
    &\InAngles{(1,-\alpha,0,\cdots,0) ,\bz_{k+\half} -\bz_k +\eta \bF(\bz_k)}  = 0\\
\Leftrightarrow & \InParentheses{\bz_{k+\half} -\bz_k +\eta \bF(\bz_k)}[1] - \alpha \InParentheses{\bz_{k+\half} -\bz_k +\eta \bF(\bz_k)}[2] = 0.
\end{align*}

Note that the LHS of Inequality~\eqref{eq:cons3} is equal to $\InAngles{\ba_{k+\half},\bz_k - \eta\bF(\bz_k)}\cdot \InAngles{\ba_{k+\half},\bz_{k+1}}$.
 Lemma \ref{lem:reduce to cone} guarantees that
$\InAngles{\ba_{k+\half},\bz_{k+1}} \ge 0$. Since $\ba_{k+\half}$ and $\bz_k -\eta \bF(\bz_k) -\bz_{k+\half}$ are oppositely directed, and $\InAngles{\ba_{k+\half},\bz_{k+\half}} = 0$, we have that \begin{align*}
0 \geq \InAngles{\ba_{k+\half},\bz_{k} - \eta \bF(\bz_k)- \bz_{k+\half}} =\InAngles{\ba_{k+\half},\bz_{k} - \eta \bF(\bz_k)}.
\end{align*}
Hence, $\InAngles{\ba_{k+\half},\bz_k - \eta\bF(\bz_k)}\cdot \InAngles{\ba_{k+\half},\bz_{k+1}}\leq 0$.

\notshow{
\begin{align*}
    \InParentheses{\alpha\InParentheses{\bz_{k}}-\eta\bF(\bz_{k})}[1] + \InParentheses{\alpha\InParentheses{\bz_{k}}-\eta\bF(\bz_{k})}[2] = \InAngles{\ba_{k+\half},\bz_{k} - \eta \bF(\bz_k)} = \InAngles{\ba_{k+\half},\bz_{k} - \eta \bF(\bz_k)- \bz_{k+\half}} &\le 0,\\
    \alpha \bz_{k+1}[1] + \bz_{k+1}[2] = \InAngles{\ba_{k+\half},\bz_{k+1}} &\ge 0.
\end{align*}
The third non-positive Expression is 

\begin{align}\label{eq:cons3}
    \InParentheses{\InParentheses{\alpha\InParentheses{\bz_{k}}-\eta\bF(\bz_{k})}[1] + \InParentheses{\alpha\InParentheses{\bz_{k}}-\eta\bF(\bz_{k})}[2]} \InParentheses{\alpha\bz_{k+1}[1]+\bz_{k+1}[2]} \le 0.
\end{align}
}

Observe that the LHS of Inequality~\eqref{eq:cons4} is equal to $-\eta\InAngles{\ba_k,\bF(\bz_k)}\cdot \InAngles{\ba_k,\bz_{k+\half}}$.
Lemma \ref{lem:reduce to cone} guarantees that $\InAngles{\ba_k,\bF(\bz_k)} \ge 0$ and $\InAngles{\ba_k,\bz_{k+\half}} \ge 0$. 

\notshow{The fourth non-positive Expression is 
\begin{align}\label{eq:cons4}
    &-\InAngles{\ba_k,\bF(\ba_k)}\InAngles{\ba_k,\bF(\ba_{k+\half})}\notag\\
    &= -\InParentheses{\beta_1\bF(\bz_k)[1] + \beta_2\bF(\bz_k)[2] + \bF(\bz_k)[3]} \InParentheses{\beta_1\bz_{k+\half}[1] + \beta_2 \bz_{k+\half}[2] + \bz_{k+\half}[3]} \le 0.
\end{align}
}

Finally, we argue Inequality~\eqref{eq:cons5} and Inequality~\eqref{eq:cons6}.
Note that the LHS of Inequality~\eqref{eq:cons5} is equal to $\InAngles{\ba_{k+1},\bz_k}\cdot \InAngles{\ba_{k+1},\bz_k - \eta \bF(\bz_{k+\half})}$. 
Since $\ba_{k+1}$  and $\bz_k - \eta \bF(\bz_{k+\half})-\bz_{k+1}$ are oppositely directed, and $\InAngles{\ba_{k+1},\bz_{k+1}} = 0$, we have that
\begin{align*}
0 \geq \InAngles{\ba_{k+1},\bz_k - \eta \bF(\bz_{k+\half})-\bz_{k+1}}   = \InAngles{\ba_{k+1},\bz_k - \eta \bF(\bz_{k+\half})}.
\end{align*}
Clearly, $-\bF(\bz_{k+1})\ind[\bF(\bz_{k+1})[1]\leq 0]$ is non-negative 
and $\bz_k[1]=\InAngles{\ba_{k+1},\bz_k}$ is also non-negative due to Lemma~\ref{lem:reduce to cone}. Thus Inequality (\ref{eq:cons5}) and Inequality (\ref{eq:cons6}) hold. 

\notshow{
The fifth non-positive Expression is 
\begin{align}\label{eq:cons5-1}
    \InParentheses{\bz_k[1]+\bF(\bz_{k+1})[1] \mathbbm{1}\inteval{\bF(\bz_{k+1})[1]\ge 0} }\InParentheses{\bz_k[1] - \eta \bF(\bz_{k+\half})[1]} \le 0.
\end{align}
Let us first consider the case when $\bF(\bz_{k+1})[1] \le 0$. 
\paragraph{Case 1: $\bF(\bz_{k+1})[1] \ge 0$. } Then (\ref{eq:general-final-potential}) can be written as follows:
\begin{align}
    & \sum_{i=1}^3 \left( \eta^2 \bF(\bz_k)[i]^2- \eta^2 \bF(\bz_{k+1})[i]^2 + 2\eta \InParentheses{ \bF(\bz_{k+1})[i] - \bF(\bz_k)[i]}\InParentheses{ \bz_k[i]-\bz_{k+1}[i]}\right. \notag\\
    &\left. \quad + \eta^2 \InParentheses{ \bF(\bz_{k+1}))[i] - \bF(\bz_{k+\half})[i]}^2- \InParentheses{\bz_{k+1}[i] - \bz_{k+\half}[i]}^2\right)\notag\\ 
    &\quad - \frac{\eta^2}{\beta_1^2 + \beta_2^2 + 1}\InParentheses{\beta_1 \bF(\bz_k)[1] + \beta_2 \bF(\bz_k)[2] + \bF(\bz_k)[3]}^2  +\eta^2 \bF(\bz_{k+1})[1]^2 .\label{eq:general-final-potential-1}
\end{align}}
\notshow{
We have the following identity.
\begin{align*}
    &\text{Expression}~\eqref{eq:constrained term} + 2\times \InParentheses{\LHSE~\eqref{eq:cons1}+ \LHSI~\eqref{eq:cons5}}+ \frac{2\alpha}{1+\alpha^2} \times \LHSE~\eqref{eq:cons2} \\
    &\qquad\qquad\qquad\qquad\qquad\quad+ \frac{2}{1+\alpha^2} \times \LHSI~\eqref{eq:cons3} + \frac{2}{1+\beta_1^2+\beta_2^2} \times \LHSI~\eqref{eq:cons4} \\
     =&  \InParentheses{\bz_k[1]-\bF(\bz_{k+\half})[1] + \bF(\bz_{k+1})[1] \mathbbm{1}\inteval{\bF(\bz_{k+1})[1]\geq 0}}^2 + \frac{1}{1+\beta_2^2} \InParentheses{\bz_k[2] - \bF(\bz_{k})[2] + \alpha \bz_{k}[1]}^2 \\
     & \quad + \frac{1}{1+\beta_1^2+\beta_2^2} \left(\InParentheses{\bF(\bz_k)[3]^2 + \beta_1\bz_k[1]^2 + \beta_2 \bz_{k}[2] + \InParentheses{\alpha\beta_2-\beta_1}\bz_{k+\half}[1]}^2 \right.\\&\left.\quad  + \frac{1}{1+\beta_2^2} \InParentheses{\InParentheses{1+\beta_2^2}\InParentheses{\bF(\bz_k)[1]- \bz_k[1]} -\beta_1\beta_2 \InParentheses{\bF(\bz_k[2]) - \bz_{k}[2]} + \InParentheses{1+\beta_2^2 + \alpha\beta_1\beta_2}\bz_{k+\half}[2]}^2  \right)\\
     &\ge 0.
\end{align*}}


Our next step is to show that the following is non-negative.
\begin{align}\label{eq:LHS identity pre substitution}
    &\text{Expression}~\eqref{eq:constrained term} + 2\times \InParentheses{\LHSE~\eqref{eq:cons1}+ \LHSI~\eqref{eq:cons5}+\LHSI~\eqref{eq:cons6}}\notag \\
    &+ \frac{2\alpha}{1+\alpha^2} \times \LHSE~\eqref{eq:cons2}+ \frac{2}{1+\alpha^2} \times \LHSI~\eqref{eq:cons3} + \frac{2}{1+\beta_1^2+\beta_2^2} \times \LHSI~\eqref{eq:cons4}
\end{align}

We first simplify Expression~\eqref{eq:LHS identity pre substitution}, using the following relationship between the variables.
\begin{align}
    \bz_{k}[3] &= -\beta_1 \bz_k[1] - \beta_2 \bz_k[2], \label{substitute-1}\\
    \bz_{k+\half}[2] &= -\alpha \bz_{k+\half}[1],\label{substitute-2}\\
    \bz_{k+\half}[3] &= \bz_{k}[3] - \eta \bF(\bz_k)[3],\label{substitute-3}\\
    \bz_{k+1}[1] &= 0,\label{substitute-4}\\
    \bz_{k+1}[2] &= \bz_{k}[2] - \eta \bF(\bz_{k+\half})[2],\label{substitute-5}\\
    \bz_{k+1}[3] &= \bz_{k}[3] - \eta \bF(\bz_{k+\half})[3]\label{substitute-6}.
\end{align}
Equation (\ref{substitute-1}), Equation (\ref{substitute-2}), and Equation (\ref{substitute-4}) follows by $\InAngles{\ba_i,\bz_i} = 0$ for $i \in \{k,k+\half,k+1\}$ due to Lemma \ref{lem:reduce to cone}. We know that (i) $\InAngles{\ba_{k+\half},e_3} = \InAngles{\ba_{k+1},e_2} = \InAngles{\ba_{k+1},e_3} = 0$ by the definition of $\ba_{k+\half}$ and $\ba_{k+1}$, and (ii) $\ba_{k+\half}$ and $\bz_{k+\half}-\bz_k + \eta \bF(\bz_k)$ are co-directed, $\ba_{k+1}$ and $\bz_{k+1}-\bz_k + \eta \bF(\bz_{k+\half})$ are co-directed by Lemma \ref{lem:reduce to cone}, thus Equation (\ref{substitute-3}), Equation (\ref{substitute-5}), and Equation (\ref{substitute-6}) follow from the combination of (i) and (ii). 

We simplify Expression~\eqref{eq:LHS identity pre substitution} by substituting $\bz_k[3]$, $\bz_{k+\half}[2]$, $\bz_{k+\half}[3]$, $\bz_{k+1}[1]$, $\bz_{k+1}[2]$, and $\bz_{k+1}[3]$ using Equations~\eqref{substitute-1}-\eqref{substitute-6}. 

Expression (\ref{eq:constrained term}) is equal to the sum of the following three parts. 

The first part is 
\begin{align}
    & \eta^2 \InNorms{p_1(\bF(\bz_k))}^2- \eta^2 \InNorms{p_1(\bF(\bz_{k+1}))}^2 - \frac{\eta^2}{\beta_1^2 + \beta_2^2 + 1}\InParentheses{\beta_1 \bF(\bz_k)[1] + \beta_2 \bF(\bz_k)[2] + \bF(\bz_k)[3]}^2 \notag \\
    &\quad   +\eta^2 \bF(\bz_{k+1})[1]^2  \mathbbm{1}[\bF(\bz_{k+1})[1]\ge 0] \notag\\
    & = \sum_{i=1}^3 \InParentheses{\eta^2 \bF(\bz_k)[i]^2 - \eta^2 \bF(\bz_{k+1})[i]^2} - \frac{\eta^2}{\beta_1^2 + \beta_2^2 + 1}\InParentheses{\beta_1 \bF(\bz_k)[1] + \beta_2 \bF(\bz_k)[2] + \bF(\bz_k)[3]}^2 \notag \\
    &\quad + \eta^2 \bF(\bz_{k+1})[1]^2  \mathbbm{1}[\bF(\bz_{k+1})[1]\ge 0].\label{eq:final-cons1}
\end{align}

The second part is 
\begin{align}
    &  2\eta \InAngles{p_1(\bF(\bz_{k+1})) - p_1(\bF(\bz_k)) ,p_1(\bz_k)-p_1(\bz_{k+1})} \notag\\
    & = 2\eta\bz_k[1]\InParentheses{\bF(\bz_{k+1})[1] - \bF(\bz_{k})[1]} + 2\eta^2 \bF(\bz_{k+\half})[2]\InParentheses{\bF(\bz_{k+1})[2] - \bF(\bz_{k})[2]}  \notag \\
    & \quad +  2\eta^2 \bF(\bz_{k+\half})[3]\InParentheses{\bF(\bz_{k+1})[3] - \bF(\bz_{k})[3]}.\label{eq:final-cons2}
\end{align}

The third part is 
\begin{align}
    & \eta^2\InNorms{p_1(\bF(\bz_{k+1})) - p_1(\bF(\bz_{k+\half}))}^2- \InNorms{p_1(\bz_{k+1}) - p_1(\bz_{k+\half})}^2\notag\\ 
    &=  \sum_{i=1}^3 \eta^2 \InParentheses{\bF(\bz_{k+1})[i] -  \bF(\bz_{k+\half})[i]}^2 - \bz_{k+\half}[1]^2 - \InParentheses{\bz_{k}[2] - \eta \bF(\bz_{k+\half})[2] +\alpha \bz_{k+\half}[1]}^2 \notag\\
    &\quad -  \InParentheses{\eta \bF(\bz_k)[3] - \eta \bF(\bz_{k+\half})[3]}^2. \label{eq:final-cons3}
\end{align}
\notshow{
\begin{align}\label{eq:sub-constrained term}
    & \eta^2 \InNorms{p_1(\bF(\bz_k))}^2- \eta^2 \InNorms{p_1(\bF(\bz_{k+1}))}^2 + 2\eta \InAngles{p_1(\bF(\bz_{k+1})) - p_1(\bF(\bz_k)) ,p_1(\bz_k)-p_1(\bz_{k+1})} \notag\\
    & \quad + \eta^2\InNorms{p_1(\bF(\bz_{k+1})) - p_1(\bF(\bz_{k+\half}))}^2- \InNorms{p_1(\bz_{k+1}) - p_1(\bz_{k+\half})}^2\notag\\ 
    &\quad - \frac{\eta^2}{\beta_1^2 + \beta_2^2 + 1}\InParentheses{\beta_1 \bF(\bz_k)[1] + \beta_2 \bF(\bz_k)[2] + \bF(\bz_k)[3]}^2  +\eta^2 \bF(\bz_{k+1})[1]^2  \mathbbm{1}[\bF(\bz_{k+1})[1]\ge 0]\\
    & = \sum_{i=1}^3 \InParentheses{\eta^2 \bF(\bz_k)[i]^2 - \eta^2 \bF(\bz_{k+1})[i]^2} + \bz_k[1]\InParentheses{\bF(\bz_{k+1})[1] - \bF(\bz_{k})[1]} \\
    & \quad + \eta \bF(\bz_{k+\half})[2]\InParentheses{\bF(\bz_{k+1})[2] - \bF(\bz_{k})[2]} +  \eta \bF(\bz_{k+\half})[3]\InParentheses{\bF(\bz_{k+1})[3] - \bF(\bz_{k})[3]} \\
    & \quad + \sum_{i=1}^3 \eta^2 \InParentheses{\bF(\bz_{k+1})[i] -  \bF(\bz_{k+\half})[i]}^2 - \bz_{k+\half}[1]^2 - \InParentheses{\bz_{k}[2] - \eta \bF(\bz_{k+\half})[2] +\alpha \bz_{k+\half}[1]}^2\\
    &\quad +  \InParentheses{\eta \bF(\bz_k)[3] - \eta \bF(\bz_{k+\half})[3]}^2 - \frac{\eta^2}{\beta_1^2 + \beta_2^2 + 1}\InParentheses{\beta_1 \bF(\bz_k)[1] + \beta_2 \bF(\bz_k)[2] + \bF(\bz_k)[3]}^2 \\
    &\quad + \eta^2 \bF(\bz_{k+1})[1]^2  \mathbbm{1}[\bF(\bz_{k+1})[1]\ge 0].
\end{align}}

$2\times$ LHS of Equation (\ref{eq:cons1}) is equal to 
\begin{align}
    &2\bz_{k+\half}[1]\InParentheses{(\bz_{k+\half} -\bz_k +\eta \bF(\bz_k))[1] - \alpha (\bz_{k+\half} -\bz_k +\eta \bF(\bz_k))[2]} \notag\\
    & = 2\bz_{k+\half}[1]\InParentheses{(\bz_{k+\half} -\bz_k +\eta \bF(\bz_k))[1] - \alpha (-\alpha \bz_{k+\half}[1] -\bz_k[2] +\eta \bF(\bz_k)[2])} \label{eq:final-cons4}.
\end{align}

$\frac{2\alpha}{1+\alpha^2} \times$ LHS of Equation (\ref{eq:cons2}) is equal to 
\begin{align}
    &\frac{2\alpha}{1+\alpha^2}\bz_{k+1}[2]\InParentheses{(\bz_{k+\half} -\bz_k +\eta \bF(\bz_k))[1] - \alpha (\bz_{k+\half} -\bz_k +\eta \bF(\bz_k))[2]} \notag\\
    & = \frac{2\alpha}{1+\alpha^2}\InParentheses{\bz_{k}[2] - \eta \bF(\bz_{k+\half})[2]}\InParentheses{(\bz_{k+\half} -\bz_k +\eta \bF(\bz_k))[1] - \alpha \InParentheses{-\alpha \bz_{k+\half}[1] -\bz_k[2] +\eta \bF(\bz_k)[2]}} \label{eq:final-cons5}.
\end{align}

$\frac{2}{1+\alpha^2} \times$ LHS of Inequality (\ref{eq:cons3}) is equal to 
\begin{align}
    &\frac{2}{1+\alpha^2}\InParentheses{\alpha\InParentheses{\bz_{k}-\eta\bF(\bz_{k})}[1] + \InParentheses{\bz_{k}-\eta\bF(\bz_{k})}[2]} \InParentheses{\alpha\bz_{k+1}[1]+\bz_{k+1}[2]}\notag\\
    & = \frac{2}{1+\alpha^2} \InParentheses{\alpha\InParentheses{\bz_{k}-\eta\bF(\bz_{k})}[1] + \InParentheses{\bz_{k}-\eta\bF(\bz_{k})}[2]}\InParentheses{\bz_{k}[2] - \eta \bF(\bz_{k+\half})[2]} \label{eq:final-cons6}.
\end{align}

$\frac{2}{1+\beta_1^2+\beta_2^2} \times$ LHS of Inequality (\ref{eq:cons4}) is equal to 
\begin{align}
    &-\frac{2}{1+\beta_1^2+\beta_2^2}\InParentheses{\beta_1\eta\bF(\bz_k)[1] + \beta_2\eta\bF(\bz_k)[2] + \eta\bF(\bz_k)[3]} \InParentheses{\beta_1\bz_{k+\half}[1] + \beta_2 \bz_{k+\half}[2] + \bz_{k+\half}[3]}\notag \\ 
    & = -\frac{2}{1+\beta_1^2+\beta_2^2}\InParentheses{\beta_1\eta\bF(\bz_k)[1] + \beta_2\eta\bF(\bz_k)[2] + \eta\bF(\bz_k)[3]} \InParentheses{\InParentheses{\beta_1-\alpha\beta_2}\bz_{k+\half}[1] + \bz_{k}[3] - \eta \bF(\bz_k)[3] } \notag \\
    & = -\frac{2}{1+\beta_1^2+\beta_2^2}\InParentheses{\eta\beta_1\bF(\bz_k)[1] + \eta\beta_2\bF(\bz_k)[2] + \eta\bF(\bz_k)[3]} \notag\\ &\qquad\qquad\cdot\InParentheses{\InParentheses{\beta_1-\alpha\beta_2}\bz_{k+\half}[1] -\beta_1 \bz_{k}[1]-\beta_2\bz_{k}[2] - \eta \bF(\bz_k)[3] }\label{eq:final-cons7}.
\end{align}

$2\times $ LHS of Inequality (\ref{eq:cons5}) is equal to 
\begin{align}
    2\bz_k[1]\InParentheses{\bz_k[1] - \eta \bF(\bz_{k+\half})[1]}\label{eq:final-cons8}.
\end{align}

$2\times $ LHS of Inequality (\ref{eq:cons6}) is equal to 
\begin{align}
   -2\eta\bF(\bz_{k+1})[1] \mathbbm{1}\inteval{\bF(\bz_{k+1})[1]\le 0 }\InParentheses{\bz_k[1] - \eta \bF(\bz_{k+\half})[1]}\label{eq:final-cons9}.
\end{align}

After the substitution, we need to argue that the sum of Expression~\eqref{eq:final-cons1} to~\eqref{eq:final-cons9} is 
a sum of quotients of SOS polynomials, which we prove by establishing the following identity.

\begin{align}
    &\text{Expression}~\eqref{eq:final-cons1} + \text{Expression}~\eqref{eq:final-cons2} + \text{Expression}~\eqref{eq:final-cons3} + \text{Expression}~\eqref{eq:final-cons4} + \text{Expression}~\eqref{eq:final-cons5}\notag\\
    &\qquad\qquad\qquad + \text{Expression}~\eqref{eq:final-cons6} + \text{Expression}~\eqref{eq:final-cons7} + \text{Expression}~\eqref{eq:final-cons8} + \text{Expression}~\eqref{eq:final-cons9} \notag\\
     &=\InParentheses{\bz_k[1]-\eta\bF(\bz_{k+\half})[1] + \eta\bF(\bz_{k+1})[1] \cdot\ind[\bF(\bz_{k+1})[1]\geq 0]}^2 \label{eq:sos-1}\\
     & + \frac{\left(\bz_k[1] - \eta\bF(\bz_k)[1] -\bz_{k+\half}[1]\right)^2}{1+\beta_1^2+\beta_2^2}  \label{eq:sos-2}\\
     &  + \frac{\left( \eta\bF(\bz_k)[3] + \beta_1\bz_k[1] + \beta_2 \bz_{k}[2] + \InParentheses{\alpha\beta_2-\beta_1}\bz_{k+\half}[1]\right)^2}{1+\beta_1^2+\beta_2^2}  \label{eq:sos-3} \\
     & + \frac{\left(\bz_k[2] -\eta\bF(\bz_{k})[2] + \alpha \bz_{k+\half}[1]\right)^2}{1+\beta_1^2+\beta_2^2}  \label{eq:sos-4}\\
     & + \frac{\left(\beta_1\InParentheses{\bz_k[2] -\eta\bF(\bz_{k})[2] + \alpha \bz_{k+\half}[1]} - \beta_2 \InParentheses{\bz_k[1] - \eta\bF(\bz_k)[1] -\bz_{k+\half}[1]}\right)^2}{1+\beta_1^2+\beta_2^2}  \label{eq:sos-5}\\
     \ge & 0. \notag
\end{align}


The identity for the case where $\bF(\bz_{k+1})[1]\geq 0$ is verified at Appendix~F in the second version of this paper on arXiv, which can be found at this \href{https://arxiv.org/abs/2204.09228v2}{link}.
Observe that only Expression~\eqref{eq:final-cons1}, Expression~\eqref{eq:final-cons9} and Term~\eqref{eq:sos-1} depend on the sign of $\bF(\bz_{k+1})[1]$.
It is sufficient for us to verify the case where $\bF(\bz_{k+1})[1]\geq 0$, as when $\bF(\bz_{k+1})[1]<0$, we only need to subtract $\eta^2 \bF(\bz_{k+1})[1]^2 + 2\eta \bF(\bz_{k+1})[1] \InParentheses{\bz_{k}[1]-\eta \bF(\bz_{k+\half})[1]}$ from both the LHS and the RHS,\footnote{Notice that $\InParentheses{\bz_k[1]-\eta\bF(\bz_{k+\half})[1] +\eta \bF(\bz_{k+1})[1]}^2 = \InParentheses{\bz_k[1]-\eta\bF(\bz_{k+\half})[1]}^2+\eta^2 \bF(\bz_{k+1})[1]^2 + 2 \eta\bF(\bz_{k+1})[1] \InParentheses{\bz_{k}[1]-\eta \bF(\bz_{k+\half})[1]}$.} and the identity still holds.

Hence, Expression~\eqref{eq:constrained term} is non-negative. Combining with the non-negativity of Expression~\eqref{eq:unconstrained term}, we conclude that \tar~is non-negative. This completes the proof.
\end{prevproof}

\notshow{
\noindent{\bf Proof Sketch }   
Assume towards contradiction that $\Ham(z_k) < \Ham(z_{k+1})$,
using Lemma~\ref{lem:reduce to cone} there exist numbers $\alpha, \beta_1, \beta_2\in \R$ and vectors $\ba_k$, $ \ba_{k+\half}$, $\ba_{k+1}$, $\bz_k$, $\bz_{k+\half}$, $\bz_{k+1}$, $\bF(\bz_k)$, $\bF(\bz_{k+\half})$, $\bF(\bz_{k+1})\in \R^N$ where $N=n+5$ that satisfy the properties in the statement of Lemma~\ref{lem:reduce to cone} and
\begin{align*}
     0&>\InNorms{\bF(\bz_k)-\frac{\InAngles{F(\bz_k),\ba_k}}{\InNorms{\ba_k}^2}\ba_k}^2 
    - \InNorms{ \bF(\bz_{k+1})-\frac{\InAngles{\bF(\bz_{k+1}),\ba_{k+1}}\ba_{k+1}}{\InNorms{\ba_{k+1}}^2}\mathbbm{1}[\InAngles{\bF(\bz_{k+1}),\ba_{k+1}}\ge 0]}^2.
\end{align*}

We use $\tar$ to denote the RHS above.
We first present 8 inequalities/equations, which directly follow from Lemma \ref{lem:reduce to cone}. We verify their correctness in the complete proof in Appendix~\ref{appx:last iterate EG}.

\vspace{-0.2in}
\begin{align}
\InAngles{\eta \bF(\bz_{k+1}) - \eta\bF(\bz_k),\bz_k-\bz_{k+1}} &\leq0,\label{eq:sketch_mononotone} \\
\InNorms{\eta \bF(\bz_{k+1}) - \eta \bF(\bz_{k+\half})}^2- \InNorms{\bz_{k+1} - \bz_{k+\half}}^2 &\le 0,\label{eq:sketch_lipsitz}\\
\bz_{k+\half}[1]\InParentheses{\InParentheses{\bz_{k+\half} -\bz_k +\eta \bF(\bz_k)}[1] - \alpha \InParentheses{\bz_{k+\half} -\bz_k +\eta \bF(\bz_k)}[2]} &= 0,\label{eq:sketch_cons1}\\
\bz_{k+1}[2]\InParentheses{\InParentheses{\bz_{k+\half} -\bz_k +\eta \bF(\bz_k)}[1] - \alpha \InParentheses{\bz_{k+\half} -\bz_k +\eta \bF(\bz_k)}[2]} &= 0,\label{eq:sketch_cons2}\\
\InParentheses{\alpha\InParentheses{\bz_{k}-\eta\bF(\bz_{k})}[1] + \InParentheses{\bz_{k}-\eta\bF(\bz_{k})}[2]} \InParentheses{\alpha\bz_{k+1}[1]+\bz_{k+1}[2]} &\le 0,\label{eq:sketch_cons3} \\
-\eta\InParentheses{\beta_1\bF(\bz_k)[1] + \beta_2\bF(\bz_k)[2] + \bF(\bz_k)[3]} \InParentheses{\beta_1\bz_{k+\half}[1] + \beta_2 \bz_{k+\half}[2] + \bz_{k+\half}[3]} &\le 0,\label{eq:sketch_cons4} \\
\bz_k[1]\InParentheses{\bz_k[1] - \eta \bF(\bz_{k+\half})[1]} &\le 0,\label{eq:sketch_cons5} \\
-\eta\bF(\bz_{k+1})[1] \mathbbm{1}\inteval{\bF(\bz_{k+1})[1]\le 0 }\InParentheses{\bz_k[1] - \eta \bF(\bz_{k+\half})[1]} &\le 0.\label{eq:sketch_cons6} 
\end{align}

We first add several non-positive terms to $\tar$ to derive Expression~\ref{eq:proof sketch Hamiltonian monotonicity LHS}.
\begin{align}\label{eq:proof sketch Hamiltonian monotonicity LHS}
    &\eta^2\cdot \tar+ 2\cdot \LHSI~\eqref{eq:sketch_mononotone}+\LHSI~\eqref{eq:sketch_lipsitz}+2\cdot \LHSE~\eqref{eq:sketch_cons1}\notag\\
    &+ \frac{2\alpha}{1+\alpha^2} \cdot \LHSE~\eqref{eq:sketch_cons2}  \quad+ \frac{2}{1+\alpha^2} \cdot\LHSI~\eqref{eq:sketch_cons3}+ \frac{2}{1+\beta_1^2+\beta_2^2} \cdot \LHSI~\eqref{eq:sketch_cons4}  \notag \\
    &+ 2\cdot \LHSI~\eqref{eq:sketch_cons5}+ 2\cdot \LHSI~\eqref{eq:sketch_cons6}
    \end{align}
After substituting the following six variables  $\bz_{k}[3], \bz_{k+\half}[2], \bz_{k+\half}[3],\bz_{k+1}[1],\bz_{k+1}[2], \bz_{k+1}[3]$  using Equation~\eqref{substitute-1} to~\eqref{substitute-6}, Expression~\eqref{eq:proof sketch Hamiltonian monotonicity LHS} equals to the following polynomial, which is clearly non-negative. We verify the validity of the substitutions (Equation~\eqref{substitute-1} to~\eqref{substitute-6}) in the full proof.
 \notshow{ 
    \begin{align*}
    & \InParentheses{\bz_k[1]-\eta\bF(\bz_{k+\half})[1] + \eta\bF(\bz_{k+1})[1] \cdot\ind[\bF(\bz_{k+1})[1]\geq 0]}^2\\
    & + \frac{\left(\bz_k[2] -\eta\bF(\bz_{k})[2] + \alpha \bz_{k+\half}[1]\right)^2}{1+\beta_2^2}\\
    &  + \frac{\left( \eta\bF(\bz_k)[3] + \beta_1\bz_k[1] + \beta_2 \bz_{k}[2] + \InParentheses{\alpha\beta_2-\beta_1}\bz_{k+\half}[1]\right)^2}{1+\beta_1^2+\beta_2^2}  \\
    & + \frac{\left(\InParentheses{1+\beta_2^2}\InParentheses{\eta\bF(\bz_k)[1]- \bz_k[1]} -\beta_1\beta_2 \InParentheses{\eta\bF(\bz_k)[2] - \bz_{k}[2]} + \InParentheses{1+\beta_2^2 + \alpha\beta_1\beta_2}\bz_{k+\half}[1]\right)^2 }{\InParentheses{1+\beta_2^2}\InParentheses{1+\beta_1^2+\beta_2^2}}
\end{align*}
}
\begin{align*}
    &\InParentheses{\bz_k[1]-\eta\bF(\bz_{k+\half})[1] + \eta\bF(\bz_{k+1})[1] \cdot\ind[\bF(\bz_{k+1})[1]\geq 0]}^2 \\
     & + \frac{\left(\bz_k[1] - \eta\bF(\bz_k)[1] -\bz_{k+\half}[1]\right)^2}{1+\beta_1^2+\beta_2^2} \\
     &  + \frac{\left( \eta\bF(\bz_k)[3] + \beta_1\bz_k[1] + \beta_2 \bz_{k}[2] + \InParentheses{\alpha\beta_2-\beta_1}\bz_{k+\half}[1]\right)^2}{1+\beta_1^2+\beta_2^2}  \\
     & + \frac{\left(\bz_k[2] -\eta\bF(\bz_{k})[2] + \alpha \bz_{k+\half}[1]\right)^2}{1+\beta_1^2+\beta_2^2} \\
     & + \frac{\left(\beta_1\InParentheses{\bz_k[2] -\eta\bF(\bz_{k})[2] + \alpha \bz_{k+\half}[1]} - \beta_2 \InParentheses{\bz_k[1] - \eta\bF(\bz_k)[1] -\bz_{k+\half}[1]}\right)^2}{1+\beta_1^2+\beta_2^2}
\end{align*}
$\hfill \blacksquare$
}

\end{document}